\title{Matrix variate and tensor variate Laplace distributions}
\author{Yurii Yurchenko\thanks{Odessa Polytechnic State University, Institute of Computer Systems, Department of Applied Mathematics and Information Technology, Shevchenko av. 1, Odessa 65044, Ukraine}}
\date{2021, May}
\begin{document}
\maketitle
\begin{abstract}
In this article, we define a matrix variate asymmetric Laplace distribution. We prove some properties of the matrix variate asymmetric Laplace distribution. We prove the relationship between the matrix variate asymmetric Laplace distribution and other distributions. We define a matrix variate generalized asymmetric Laplace distribution. We prove some properties of the matrix variate generalized asymmetric Laplace distribution. We prove the relationship between the matrix variate generalized asymmetric Laplace distribution and other distributions. We define a tensor variate asymmetric Laplace distribution. We prove some properties of the tensor variate asymmetric Laplace distribution. We prove the relationship between the tensor variate asymmetric Laplace distribution and other distributions. We define a tensor variate generalized asymmetric Laplace distribution. We prove some properties of the tensor variate generalized asymmetric Laplace distribution. We prove the relationship between the tensor variate generalized asymmetric Laplace distribution and other distributions.
\end{abstract}
\textbf{2020 Mathematics Subject Classification}: 60B20, 60E05, 60B12, 60E10.\\
\textbf{Keywords}: random matrix, matrix distribution, Laplace distribution.
\newtheorem{prop}{Proposition}[subsection]
\newtheorem{cor}{Corollary}[subsection]
\newtheorem{defi}{Definition}[subsection]
\newtheorem{property}{Property}[subsection]
\newtheorem{lemma}{Lemma}[subsection]
\newtheorem{theorem}{Theorem}[subsection]
\section{Matrix variate Laplace distributions}
\subsection{Introduction}
The matrix variate asymmetric Laplace distribution is a continuous probability distribution that is a generalization of the multivariate asymmetric Laplace distribution to matrix-valued random variables. The matrix variate asymmetric Laplace distribution of a random matrix $\mathbf{X}\in\mathbb{R}^{k\times n}$ can be written in the following notation:
\begin{equation*}
\mathcal{MAL}_{k\times n}\left(\mathbf{M},\mathbf{\Sigma},\mathbf{\Psi}\right)
\end{equation*}
or
\begin{equation*}
\mathcal{MAL}\left(\mathbf{M},\mathbf{\Sigma},\mathbf{\Psi}\right),
\end{equation*}
where $\mathbf{M}\in \mathbb{R}^{k\times n}$ is a location matrix, $\mathbf{\Sigma}\in \mathbb{R}^{k\times k}$ is a positive-definite scale matrix and $\mathbf{\Psi}\in \mathbb{R}^{n\times n}$ is a positive-definite scale matrix.

The matrix variate generalized asymmetric Laplace distribution is a continuous probability distribution that is a generalization of the multivariate generalized asymmetric Laplace distribution to matrix-valued random variables. The matrix variate generalized asymmetric Laplace distribution of a random matrix $\mathbf{X}\in\mathbb{R}^{k\times n}$ can be written in the following notation:
\begin{equation*}
\mathcal{MGAL}_{k\times n}\left(\mathbf{M},\mathbf{\Sigma},\mathbf{\Psi},\lambda\right)
\end{equation*}
or
\begin{equation*}
\mathcal{MGAL}\left(\mathbf{M},\mathbf{\Sigma},\mathbf{\Psi},\lambda\right),
\end{equation*}
where $\mathbf{M}\in \mathbb{R}^{k\times n}$ is a location matrix, $\mathbf{\Sigma}\in \mathbb{R}^{k\times k}$ is a positive-definite scale matrix, $\mathbf{\Psi}\in \mathbb{R}^{n\times n}$ is a positive-definite scale matrix and $\lambda>0$. 
\subsection{Matrix variate asymmetric Laplace distribution}
\begin{defi}
\label{pmfmataslap}
$\mathbf{X}\sim\mathcal{MAL}\left(\mathbf{M},\mathbf{\Sigma},\mathbf{\Psi}\right)$ if and only if the probability density function of the random matrix $\mathbf{X}\in\mathbb{R}^{k\times n}$ is given by
\begin{equation*}
\begin{split}
f_{\mathbf{X}}\left(\mathbf{x}\right)&=\frac{2\operatorname{exp}\operatorname{tr}\left(\mathbf{\Psi}^{-1}\mathbf{x}^{\mathrm{T}}\mathbf{\Sigma}^{-1}\mathbf{M}\right)}{\left(2\pi\right)^{kn/2}\left(\det\mathbf{\Psi}\right)^{k/2}\left(\det\mathbf{\Sigma}\right)^{n/2}}\left(\frac{\operatorname{tr}\left(\mathbf{\Psi}^{-1}\mathbf{x}^{\mathrm{T}}\mathbf{\Sigma}^{-1}\mathbf{x}\right)}{2+\operatorname{tr}\left(\mathbf{\Psi}^{-1}\mathbf{M}^{\mathrm{T}}\mathbf{\Sigma}^{-1}\mathbf{M}\right)}\right)^{1/2 -kn/4}\\&\times K_{1 -kn/2}\left(\sqrt{\left(2+\operatorname{tr}\left(\mathbf{\Psi}^{-1}\mathbf{M}^{\mathrm{T}}\mathbf{\Sigma}^{-1}\mathbf{M}\right)\right)\operatorname{tr}\left(\mathbf{\Psi}^{-1}\mathbf{x}^{\mathrm{T}}\mathbf{\Sigma}^{-1}\mathbf{x}\right)}\right),
\end{split}
\end{equation*}
where  $K_{1 -kn/2}(\cdot)$ is the modified Bessel function of the third kind.
\end{defi}
\begin{theorem}
\label{vecaslap}
$\mathbf{X}\sim\mathcal{MAL}_{k\times n}\left(\mathbf{M},\mathbf{\Sigma},\mathbf{\Psi}\right)\Leftrightarrow\operatorname{vec}\left(\mathbf{X}\right)\sim\mathcal{AL}_{kn}\left(\operatorname{vec}\mathbf{M},\mathbf{\Psi}\otimes\mathbf{\Sigma}\right).$
\end{theorem}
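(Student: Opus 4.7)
The plan is to verify the claim at the level of probability density functions, using the standard identities that relate Kronecker products to vectorization. First I would recall that $\mathbf{Y}\sim\mathcal{AL}_{d}(\boldsymbol{\mu},\boldsymbol{\Xi})$ has density
\[
f_{\mathbf{Y}}(\mathbf{y})=\frac{2\exp\bigl(\mathbf{y}^{\mathrm T}\boldsymbol{\Xi}^{-1}\boldsymbol{\mu}\bigr)}{(2\pi)^{d/2}(\det\boldsymbol{\Xi})^{1/2}}\left(\frac{\mathbf{y}^{\mathrm T}\boldsymbol{\Xi}^{-1}\mathbf{y}}{2+\boldsymbol{\mu}^{\mathrm T}\boldsymbol{\Xi}^{-1}\boldsymbol{\mu}}\right)^{(2-d)/4}K_{(2-d)/2}\!\left(\sqrt{\bigl(2+\boldsymbol{\mu}^{\mathrm T}\boldsymbol{\Xi}^{-1}\boldsymbol{\mu}\bigr)\mathbf{y}^{\mathrm T}\boldsymbol{\Xi}^{-1}\mathbf{y}}\right),
\]
and then specialize to $d=kn$, $\boldsymbol{\mu}=\operatorname{vec}\mathbf{M}$ and $\boldsymbol{\Xi}=\mathbf{\Psi}\otimes\mathbf{\Sigma}$. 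Because $\mathbf{X}\mapsto\operatorname{vec}\mathbf{X}$ is a linear bijection with unit Jacobian (it merely reorders entries), the two implications of the equivalence will follow at once once this specialization is shown to produce precisely the density in Definition~\ref{pmfmataslap}.

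Next I would collect the three Kronecker identities needed: $(\mathbf{\Psi}\otimes\mathbf{\Sigma})^{-1}=\mathbf{\Psi}^{-1}\otimes\mathbf{\Sigma}^{-1}$, the determinant rule $\det(\mathbf{\Psi}\otimes\mathbf{\Sigma})=(\det\mathbf{\Psi})^{k}(\det\mathbf{\Sigma})^{n}$, and the vec--trace bilinear identity
\[
\operatorname{vec}(\mathbf{A})^{\mathrm T}\bigl(\mathbf{\Psi}^{-1}\otimes\mathbf{\Sigma}^{-1}\bigr)\operatorname{vec}(\mathbf{B})=\operatorname{tr}\bigl(\mathbf{\Psi}^{-1}\mathbf{A}^{\mathrm T}\mathbf{\Sigma}^{-1}\mathbf{B}\bigr),
\]
which follows from $\operatorname{vec}(\mathbf{P}\mathbf{X}\mathbf{Q})=(\mathbf{Q}^{\mathrm T}\otimes\mathbf{P})\operatorname{vec}(\mathbf{X})$, the relation $\operatorname{vec}(\mathbf{A})^{\mathrm T}\operatorname{vec}(\mathbf{B})=\operatorname{tr}(\mathbf{A}^{\mathrm T}\mathbf{B})$, symmetry of $\mathbf{\Psi}^{-1}$, and the cyclic property of the trace. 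Applying this with $(\mathbf{A},\mathbf{B})$ equal to $(\mathbf{x},\mathbf{x})$, $(\mathbf{x},\mathbf{M})$, and $(\mathbf{M},\mathbf{M})$ turns the three quadratic forms appearing in the multivariate density into exactly the three traces that appear in Definition~\ref{pmfmataslap}.

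Substituting termwise then finishes the argument: the exponential factor becomes $\exp\operatorname{tr}(\mathbf{\Psi}^{-1}\mathbf{x}^{\mathrm T}\mathbf{\Sigma}^{-1}\mathbf{M})$; the normalizing denominator becomes $(2\pi)^{kn/2}(\det\mathbf{\Psi})^{k/2}(\det\mathbf{\Sigma})^{n/2}$; the exponent $(2-d)/4$ becomes $1/2-kn/4$; the order $(2-d)/2$ of the Bessel function becomes $1-kn/2$; and the two arguments inside the power and inside $K$ become the stated products of traces. The main obstacle is really just bookkeeping: keeping track of the convention for $\operatorname{vec}$ (column stacking), the order of the Kronecker factors on each side of the vec--trace identity, and the signs of the exponents in the power and in the Bessel order. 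Once those conventions are fixed, every remaining step is a direct substitution, because the modified Bessel function enters both densities in precisely the same functional form.
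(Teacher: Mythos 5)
Your proposal is correct and follows essentially the same route as the paper: both arguments identify the two densities by applying the vec--trace identity $\operatorname{tr}\left(\mathbf{\Psi}^{-1}\mathbf{A}^{\mathrm{T}}\mathbf{\Sigma}^{-1}\mathbf{B}\right)=\left(\operatorname{vec}\mathbf{A}\right)^{\mathrm{T}}\left(\mathbf{\Psi}\otimes\mathbf{\Sigma}\right)^{-1}\left(\operatorname{vec}\mathbf{B}\right)$ to the three quadratic forms and the determinant rule $\det\left(\mathbf{\Psi}\otimes\mathbf{\Sigma}\right)=\left(\det\mathbf{\Psi}\right)^{k}\left(\det\mathbf{\Sigma}\right)^{n}$ to the normalizing constant. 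Your explicit remark that $\operatorname{vec}$ is a measure-preserving bijection (so equality of densities transfers the distributional statement in both directions) is a small point the paper leaves implicit, but the substance is identical.
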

\begin{proof}
By Definition \ref{pmfmataslap}, $\mathbf{X}\sim\mathcal{MAL}_{k\times n}\left(\mathbf{M},\mathbf{\Sigma},\mathbf{\Psi}\right)$ if and only if
\begin{equation*}
\begin{split}
f_{\mathbf{X}}\left(\mathbf{x}\right)&=\frac{2\operatorname{exp}\operatorname{tr}\left(\mathbf{\Psi}^{-1}\mathbf{x}^{\mathrm{T}}\mathbf{\Sigma}^{-1}\mathbf{M}\right)}{\left(2\pi\right)^{kn/2}\left(\det\mathbf{\Psi}\right)^{k/2}\left(\det\mathbf{\Sigma}\right)^{n/2}}\left(\frac{\operatorname{tr}\left(\mathbf{\Psi}^{-1}\mathbf{x}^{\mathrm{T}}\mathbf{\Sigma}^{-1}\mathbf{x}\right)}{2+\operatorname{tr}\left(\mathbf{\Psi}^{-1}\mathbf{M}^{\mathrm{T}}\mathbf{\Sigma}^{-1}\mathbf{M}\right)}\right)^{1/2 -kn/4}\\&\times K_{1 -kn/2}\left(\sqrt{\left(2+\operatorname{tr}\left(\mathbf{\Psi}^{-1}\mathbf{M}^{\mathrm{T}}\mathbf{\Sigma}^{-1}\mathbf{M}\right)\right)\operatorname{tr}\left(\mathbf{\Psi}^{-1}\mathbf{x}^{\mathrm{T}}\mathbf{\Sigma}^{-1}\mathbf{x}\right)}\right),
\end{split}
\end{equation*}
where  $K_{1 -kn/2}(\cdot)$ is the modified Bessel function of the third kind.

By the definition of the multivariate asymmetric Laplace distribution \cite{kotzas}, $\operatorname{vec}\left(\mathbf{X}\right)\sim\mathcal{AL}_{kn}\left(\operatorname{vec}\mathbf{M},\mathbf{\Psi}\otimes\mathbf{\Sigma}\right)$ if and only if
\begin{equation*}
\begin{split}
 f_{\operatorname{vec}\mathbf{X}}\left(\operatorname{vec}\mathbf{x}\right)&=\frac{2\operatorname{exp}\left(\left(\operatorname{vec}\mathbf{x}\right)^{\mathrm{T}}\left(\mathbf{\Psi}\otimes\mathbf{\Sigma}\right)^{-1}\left(\operatorname{vec}\mathbf{M}\right)\right)}{\left(2\pi\right)^{kn/2}\left(\det\left(\mathbf{\Psi}\otimes\mathbf{\Sigma}\right)\right)^{1/2}}\left(\frac{\left(\operatorname{vec}\mathbf{x}\right)^{\mathrm{T}}\left(\mathbf{\Psi}\otimes\mathbf{\Sigma}\right)^{-1}\left(\operatorname{vec}\mathbf{x}\right)}{2+\left(\operatorname{vec}\mathbf{M}\right)^{\mathrm{T}}\left(\mathbf{\Psi}\otimes\mathbf{\Sigma}\right)^{-1}\left(\operatorname{vec}\mathbf{M}\right)}\right)^{1/2 -kn/4}\\& \times K_{1 -kn/2}\left(\sqrt{\left(2+\left(\operatorname{vec}\mathbf{M}\right)^{\mathrm{T}}\left(\mathbf{\Psi}\otimes\mathbf{\Sigma}\right)^{-1}\left(\operatorname{vec}\mathbf{M}\right)\right)\left(\left(\operatorname{vec}\mathbf{x}\right)^{\mathrm{T}}\left(\mathbf{\Psi}\otimes\mathbf{\Sigma}\right)^{-1}\left(\operatorname{vec}\mathbf{x}\right)\right)}\right),
\end{split}
\end{equation*}
where  $K_{1 -kn/2}(\cdot)$ is the modified Bessel function of the third kind.

By the properties of the trace, Kronecker product and vectorization \cite[p.~2-12]{gupta},
\begin{equation*}
\begin{split}
&\operatorname{tr}\left(\mathbf{\Psi}^{-1}\mathbf{A}^{\mathrm{T}}\mathbf{\Sigma}^{-1}\mathbf{B}\right)\\&
=\left(\operatorname{vec}\mathbf{A}\right)^{\mathrm{T}}\operatorname{vec}\left(\mathbf{\Sigma}^{-1}\mathbf{B}\mathbf{\Psi}^{-1}\right)\\&
=\left(\operatorname{vec}\mathbf{A}\right)^{\mathrm{T}}\left(\left(\mathbf{\Psi}^{-1}\right)^{\mathrm{T}}\otimes\mathbf{\Sigma}^{-1}\right)\left(\operatorname{vec}\mathbf{B}\right)\\&
=\left(\operatorname{vec}\mathbf{A}\right)^{\mathrm{T}}\left(\mathbf{\Psi}^{\mathrm{T}}\otimes\mathbf{\Sigma}\right)^{-1}\left(\operatorname{vec}\mathbf{B}\right)\\&
=\left(\operatorname{vec}\mathbf{A}\right)^{\mathrm{T}}\left(\mathbf{\Psi}\otimes\mathbf{\Sigma}\right)^{-1}\left(\operatorname{vec}\mathbf{B}\right).
\end{split}
\end{equation*}
Therefore,
\begin{equation*}
\begin{split}
\operatorname{tr}\left(\mathbf{\Psi}^{-1}\mathbf{x}^{\mathrm{T}}\mathbf{\Sigma}^{-1}\mathbf{x}\right)=\left(\operatorname{vec}\mathbf{x}\right)^{\mathrm{T}}\left(\mathbf{\Psi}\otimes\mathbf{\Sigma}\right)^{-1}\left(\operatorname{vec}\mathbf{x}\right),
\end{split}
\end{equation*}
\begin{equation*}
\begin{split}
\operatorname{tr}\left(\mathbf{\Psi}^{-1}\mathbf{M}^{\mathrm{T}}\mathbf{\Sigma}^{-1}\mathbf{M}\right)=\left(\operatorname{vec}\mathbf{M}\right)^{\mathrm{T}}\left(\mathbf{\Psi}\otimes\mathbf{\Sigma}\right)^{-1}\left(\operatorname{vec}\mathbf{M}\right),
\end{split}
\end{equation*}
and
\begin{equation*}
\begin{split}
\operatorname{tr}\left(\mathbf{\Psi}^{-1}\mathbf{x}^{\mathrm{T}}\mathbf{\Sigma}^{-1}\mathbf{M}\right)=\left(\operatorname{vec}\mathbf{x}\right)^{\mathrm{T}}\left(\mathbf{\Psi}\otimes\mathbf{\Sigma}\right)^{-1}\left(\operatorname{vec}\mathbf{M}\right).
\end{split}
\end{equation*}
By the property of the determinant \cite[p.~2-12]{gupta},
\begin{equation*}
\begin{split}
\left(\det\mathbf{\Psi}\right)^{k/2}\left(\det\mathbf{\Sigma}\right)^{n/2}
=\left(\det\left(\mathbf{\Psi}\otimes\mathbf{\Sigma}\right)\right)^{1/2}.
\end{split}
\end{equation*}
Thus, $\mathbf{X}\sim\mathcal{MAL}_{k\times n}\left(\mathbf{M},\mathbf{\Sigma},\mathbf{\Psi}\right)\Leftrightarrow\operatorname{vec}\left(\mathbf{X}\right)\sim\mathcal{AL}_{kn}\left(\operatorname{vec}\mathbf{M},\mathbf{\Psi}\otimes\mathbf{\Sigma}\right).$
\end{proof}
\begin{theorem}
\label{charmataslap}
Let $\mathbf{T}\in \mathbb{R}^{k\times n}$. $\mathbf{X}\sim\mathcal{MAL}_{k\times n}\left(\mathbf{M},\mathbf{\Sigma},\mathbf{\Psi}\right)$ if and only if the characteristic function of the random matrix $\mathbf{X}\in\mathbb{R}^{k\times n}$  is given by
\begin{equation*}
\varphi_{\mathbf{X}}\left(\mathbf{T}\right)=\frac{1}{1+\frac{1}{2}\operatorname{tr}\left(\mathbf{\Psi}\mathbf{T}^{\mathrm{T}}\mathbf{\Sigma}\mathbf{T}\right)-\mathrm{i}\operatorname{tr}\left(\mathbf{M}^{\mathrm{T}}\mathbf{T}\right)}.
\end{equation*}
\end{theorem}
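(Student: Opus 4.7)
The plan is to bypass any direct integration against the density by reducing the matrix problem to the vector one via Theorem \ref{vecaslap}, and then invoking the known characteristic function of the multivariate asymmetric Laplace distribution from \cite{kotzas}. This is the natural route because the ``characteristic function'' of a random matrix $\mathbf{X}\in\mathbb{R}^{k\times n}$ at a matrix argument $\mathbf{T}$ is defined as $\varphi_{\mathbf{X}}(\mathbf{T})=\mathbb{E}\exp\bigl(\mathrm{i}\operatorname{tr}(\mathbf{T}^{\mathrm{T}}\mathbf{X})\bigr)$, and the identity $\operatorname{tr}(\mathbf{T}^{\mathrm{T}}\mathbf{X})=(\operatorname{vec}\mathbf{T})^{\mathrm{T}}(\operatorname{vec}\mathbf{X})$ immediately gives $\varphi_{\mathbf{X}}(\mathbf{T})=\varphi_{\operatorname{vec}\mathbf{X}}(\operatorname{vec}\mathbf{T})$.

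First I would state the multivariate fact: for $\mathbf{Y}\sim\mathcal{AL}_{d}(\boldsymbol{\mu},\boldsymbol{\Sigma})$ one has
\begin{equation*}
\varphi_{\mathbf{Y}}(\mathbf{t})=\frac{1}{1+\tfrac{1}{2}\mathbf{t}^{\mathrm{T}}\boldsymbol{\Sigma}\mathbf{t}-\mathrm{i}\boldsymbol{\mu}^{\mathrm{T}}\mathbf{t}}.
\end{equation*}
Applying Theorem \ref{vecaslap}, with $d=kn$, $\boldsymbol{\mu}=\operatorname{vec}\mathbf{M}$ and $\boldsymbol{\Sigma}=\mathbf{\Psi}\otimes\mathbf{\Sigma}$, and evaluating at $\mathbf{t}=\operatorname{vec}\mathbf{T}$, I would obtain
\begin{equation*}
\varphi_{\mathbf{X}}(\mathbf{T})=\frac{1}{1+\tfrac{1}{2}(\operatorname{vec}\mathbf{T})^{\mathrm{T}}(\mathbf{\Psi}\otimes\mathbf{\Sigma})(\operatorname{vec}\mathbf{T})-\mathrm{i}(\operatorname{vec}\mathbf{M})^{\mathrm{T}}(\operatorname{vec}\mathbf{T})}.
\end{equation*}

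The remaining step is to rewrite the quadratic and linear forms in matrix notation. The linear term collapses at once to $\operatorname{tr}(\mathbf{M}^{\mathrm{T}}\mathbf{T})$ via the standard identity $(\operatorname{vec}\mathbf{A})^{\mathrm{T}}(\operatorname{vec}\mathbf{B})=\operatorname{tr}(\mathbf{A}^{\mathrm{T}}\mathbf{B})$. For the quadratic term I would apply the vec/Kronecker identity already exploited in the proof of Theorem \ref{vecaslap}, but this time without the inverses: $(\operatorname{vec}\mathbf{A})^{\mathrm{T}}(\mathbf{\Psi}\otimes\mathbf{\Sigma})(\operatorname{vec}\mathbf{B})=\operatorname{tr}(\mathbf{\Psi}\mathbf{A}^{\mathrm{T}}\mathbf{\Sigma}\mathbf{B})$, applied with $\mathbf{A}=\mathbf{B}=\mathbf{T}$, yielding $\operatorname{tr}(\mathbf{\Psi}\mathbf{T}^{\mathrm{T}}\mathbf{\Sigma}\mathbf{T})$. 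Substituting these back produces exactly the claimed formula.

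I do not anticipate any serious obstacle: the multivariate characteristic function is a cited fact, the reduction is a single application of Theorem \ref{vecaslap}, and the vec/trace/Kronecker manipulations are of the same type already performed earlier in the section. The only point requiring a brief justification is the interpretation of the matrix characteristic function as $\mathbb{E}\exp(\mathrm{i}\operatorname{tr}(\mathbf{T}^{\mathrm{T}}\mathbf{X}))$, which I would state explicitly at the outset so that the reduction to $\varphi_{\operatorname{vec}\mathbf{X}}(\operatorname{vec}\mathbf{T})$ is unambiguous.
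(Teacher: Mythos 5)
Your proposal follows exactly the paper's route: reduce to $\operatorname{vec}\mathbf{X}$ via Theorem \ref{vecaslap}, quote the characteristic function of $\mathcal{AL}_{kn}\left(\operatorname{vec}\mathbf{M},\mathbf{\Psi}\otimes\mathbf{\Sigma}\right)$ from the cited reference, and rewrite the quadratic and linear forms as traces. If anything, you are more explicit than the paper about the identities $(\operatorname{vec}\mathbf{A})^{\mathrm{T}}(\operatorname{vec}\mathbf{B})=\operatorname{tr}(\mathbf{A}^{\mathrm{T}}\mathbf{B})$ and $(\operatorname{vec}\mathbf{T})^{\mathrm{T}}(\mathbf{\Psi}\otimes\mathbf{\Sigma})(\operatorname{vec}\mathbf{T})=\operatorname{tr}(\mathbf{\Psi}\mathbf{T}^{\mathrm{T}}\mathbf{\Sigma}\mathbf{T})$ and about the definition $\varphi_{\mathbf{X}}(\mathbf{T})=\mathbb{E}\exp\bigl(\mathrm{i}\operatorname{tr}(\mathbf{T}^{\mathrm{T}}\mathbf{X})\bigr)$, which the paper leaves implicit.
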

\begin{proof}
By Theorem \ref{vecaslap},
\begin{equation*}
\mathbf{X}\sim\mathcal{MAL}_{k\times n}\left(\mathbf{M},\mathbf{\Sigma},\mathbf{\Psi}\right)\Leftrightarrow\operatorname{vec}\left(\mathbf{X}\right)\sim\mathcal{AL}_{kn}\left(\operatorname{vec}\mathbf{M},\mathbf{\Psi}\otimes\mathbf{\Sigma}\right).
\end{equation*}
By the definition of the multivariate asymmetric Laplace distribution \cite{kotzas},
\begin{equation*}
\varphi_{\operatorname{vec}\mathbf{X}}\left(\operatorname{vec}\mathbf{T}\right)=\frac{1}{1+\frac{1}{2}\left(\operatorname{vec}\mathbf{T}\right)^{\mathrm{T}}\left(\mathbf{\Psi}\otimes\mathbf{\Sigma}\right)\left(\operatorname{vec}\mathbf{T}\right)-\mathrm{i}\left(\operatorname{vec}\mathbf{M}\right)^{\mathrm{T}}\left(\operatorname{vec}\mathbf{T}\right)}.
\end{equation*}
Thus,
\begin{equation*}
\varphi_{\operatorname{vec}\mathbf{X}}\left(\operatorname{vec}\mathbf{T}\right)=\varphi_{\mathbf{X}}\left(\mathbf{T}\right)=\frac{1}{1+\frac{1}{2}\operatorname{tr}\left(\mathbf{\Psi}\mathbf{T}^{\mathrm{T}}\mathbf{\Sigma}\mathbf{T}\right)-\mathrm{i}\operatorname{tr}\left(\mathbf{M}^{\mathrm{T}}\mathbf{T}\right)}.
\end{equation*}
\end{proof}
\begin{theorem}
Let $\mathbf{Y}\sim\mathcal{MAL}_{k\times n}\left(\mathbf{M},\mathbf{\Sigma},\mathbf{\Psi}\right)$, $\mathbf{X}\sim\mathcal{MN}_{k\times n}\left(\mathbf{0},\mathbf{\Sigma},\mathbf{\Psi}\right)$ and $W\sim\mathrm{Exp}\left(1\right)$, independent of $\mathbf{X}$, then 
\begin{equation*}
\mathbf{Y}=\mathbf{M}W+W^{1/2}\mathbf{X}.
\end{equation*}
\end{theorem}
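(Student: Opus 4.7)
The plan is to show that $\mathbf{M}W+W^{1/2}\mathbf{X}$ has the same characteristic function as $\mathbf{Y}$, so the asserted equality should be read as equality in distribution. Since Theorem~\ref{charmataslap} already gives an explicit formula for $\varphi_{\mathbf{Y}}(\mathbf{T})$, it suffices to compute $\varphi_{\mathbf{M}W+W^{1/2}\mathbf{X}}(\mathbf{T})$ and match the two expressions.

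Fix $\mathbf{T}\in\mathbb{R}^{k\times n}$. First I would use linearity of the trace to split the exponent, and then condition on $W$ using the independence of $W$ from $\mathbf{X}$; the inner conditional expectation is the matrix normal characteristic function evaluated at $W^{1/2}\mathbf{T}$, which for $\mathbf{X}\sim\mathcal{MN}_{k\times n}(\mathbf{0},\mathbf{\Sigma},\mathbf{\Psi})$ equals $\exp\bigl(-\tfrac{W}{2}\operatorname{tr}(\mathbf{\Psi}\mathbf{T}^{\mathrm{T}}\mathbf{\Sigma}\mathbf{T})\bigr)$. Combining the remaining factor yields
\begin{equation*}
\varphi_{\mathbf{M}W+W^{1/2}\mathbf{X}}(\mathbf{T}) = \mathbb{E}\bigl[\exp(-Ws)\bigr],\qquad s:=\tfrac{1}{2}\operatorname{tr}(\mathbf{\Psi}\mathbf{T}^{\mathrm{T}}\mathbf{\Sigma}\mathbf{T})-\mathrm{i}\operatorname{tr}(\mathbf{M}^{\mathrm{T}}\mathbf{T}),
\end{equation*}
and then evaluating the Laplace transform of $W\sim\mathrm{Exp}(1)$ at $s$ produces $1/(1+s)$, which is precisely the expression for $\varphi_{\mathbf{Y}}(\mathbf{T})$ appearing in Theorem~\ref{charmataslap}.

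The main obstacle, truly a minor one, is the justification of the Laplace transform step at a complex argument. Since $\operatorname{Re}(s)=\tfrac{1}{2}\operatorname{tr}(\mathbf{\Psi}\mathbf{T}^{\mathrm{T}}\mathbf{\Sigma}\mathbf{T})\geq 0$ by positive-definiteness of $\mathbf{\Sigma}$ and $\mathbf{\Psi}$, the integral $\int_0^{\infty}e^{-ws}e^{-w}\,dw$ converges absolutely and evaluates to $1/(1+s)$ either by direct computation or by analytic continuation from the real case. An alternative route would be to vectorize via Theorem~\ref{vecaslap}, note that $\operatorname{vec}(W^{1/2}\mathbf{X})=W^{1/2}\operatorname{vec}(\mathbf{X})$ with $\operatorname{vec}(\mathbf{X})\sim\mathcal{N}_{kn}(\mathbf{0},\mathbf{\Psi}\otimes\mathbf{\Sigma})$, and quote the analogous representation for the multivariate asymmetric Laplace distribution from \cite{kotzas}; the characteristic-function approach is however more self-contained.
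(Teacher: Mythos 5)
Your proof is correct, but it takes a genuinely different route from the paper. The paper vectorizes: it writes $\operatorname{vec}\mathbf{Y}=(\operatorname{vec}\mathbf{M})W+W^{1/2}\operatorname{vec}\mathbf{X}$, notes $\operatorname{vec}\mathbf{X}\sim\mathcal{N}_{kn}(\mathbf{0},\mathbf{\Psi}\otimes\mathbf{\Sigma})$, invokes the known normal--exponential mixture representation of the multivariate asymmetric Laplace law (Theorem 6.3.1 of Kotz--Kozubowski--Podg\'orski) to conclude $\operatorname{vec}\mathbf{Y}\sim\mathcal{AL}_{kn}(\operatorname{vec}\mathbf{M},\mathbf{\Psi}\otimes\mathbf{\Sigma})$, and finishes with Theorem~\ref{vecaslap} --- exactly the ``alternative route'' you sketch in your last sentence. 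Your primary argument instead conditions on $W$, uses the matrix normal characteristic function $\exp\bigl(-\tfrac{W}{2}\operatorname{tr}(\mathbf{\Psi}\mathbf{T}^{\mathrm{T}}\mathbf{\Sigma}\mathbf{T})\bigr)$, and evaluates the Laplace transform of $\mathrm{Exp}(1)$ at the complex argument $s$ with $\operatorname{Re}(s)\geq 0$, matching the formula in Theorem~\ref{charmataslap}. What each buys: the paper's reduction is shorter but outsources the substance to the cited representation theorem, whereas your computation is self-contained and in effect reproves that representation directly in the matrix setting; your explicit attention to the convergence of $\int_0^{\infty}e^{-ws}e^{-w}\,dw$ for complex $s$ is a point the paper never has to address because it borrows the result. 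You are also right to read the theorem's conclusion as equality in distribution, which is what the paper intends but does not say.
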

\begin{proof}
Let $\mathbf{Y}=\mathbf{M}W+W^{1/2}\mathbf{X}$.

Therefore,
\begin{equation*}
\operatorname{vec}\mathbf{Y}=\operatorname{vec}\left(\mathbf{M}W+W^{1/2}\mathbf{X}\right)=\left(\operatorname{vec}\mathbf{M}\right)W+W^{1/2}\operatorname{vec}\mathbf{X}.
\end{equation*}
By the definition of the matrix normal distribution \cite{gupta},
\begin{equation*}
\mathbf{X}\sim\mathcal{MN}_{k\times n}\left(\mathbf{M},\mathbf{\Sigma},\mathbf{\Psi}\right)\Leftrightarrow\operatorname{vec}\left(\mathbf{X}\right)\sim\mathcal{N}_{kn}\left(\operatorname{vec}\mathbf{M},\mathbf{\Psi}\otimes\mathbf{\Sigma}\right).
\end{equation*}
By the theorem \cite[Theorem 6.3.1]{kotzas},
\begin{equation*}
\operatorname{vec}\left(\mathbf{Y}\right)\sim\mathcal{AL}_{kn}\left(\operatorname{vec}\mathbf{M},\mathbf{\Psi}\otimes\mathbf{\Sigma}\right).
\end{equation*}
By Theorem \ref{vecaslap},
\begin{equation*}
\mathbf{Y}\sim\mathcal{MAL}_{k\times n}\left(\mathbf{M},\mathbf{\Sigma},\mathbf{\Psi}\right)\Leftrightarrow\operatorname{vec}\left(\mathbf{Y}\right)\sim\mathcal{AL}_{kn}\left(\operatorname{vec}\mathbf{M},\mathbf{\Psi}\otimes\mathbf{\Sigma}\right).
\end{equation*}
\end{proof}
\begin{theorem}
\label{expvalueaslap}
If $\mathbf{X}\sim\mathcal{MAL}\left(\mathbf{M},\mathbf{\Sigma},\mathbf{\Psi}\right)$, then the expected value of the random matrix $\mathbf{X}\in\mathbb{R}^{k\times n}$ is given by
\begin{equation*}
 \mathbb{E}\left[\mathbf{X}\right]=\mathbf{M}.
\end{equation*}
\end{theorem}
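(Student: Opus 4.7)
The plan is to reduce the matrix-variate statement to the already-known multivariate-variate statement via Theorem \ref{vecaslap}, and then invert the vectorization. Since vectorization is a linear bijection $\mathbb{R}^{k\times n}\to\mathbb{R}^{kn}$, taking expectations commutes with it, so it suffices to show that $\mathbb{E}\!\left[\operatorname{vec}\mathbf{X}\right]=\operatorname{vec}\mathbf{M}$ and then undo the vectorization.

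Concretely, I would proceed as follows. First, invoke Theorem \ref{vecaslap} to conclude
\begin{equation*}
\operatorname{vec}\mathbf{X}\sim\mathcal{AL}_{kn}\!\left(\operatorname{vec}\mathbf{M},\mathbf{\Psi}\otimes\mathbf{\Sigma}\right).
\end{equation*}
Next, cite the known formula for the mean of the multivariate asymmetric Laplace distribution from \cite{kotzas}, which gives $\mathbb{E}\!\left[\operatorname{vec}\mathbf{X}\right]=\operatorname{vec}\mathbf{M}$. Finally, since $\operatorname{vec}$ is a linear operator we have $\operatorname{vec}\!\left(\mathbb{E}\!\left[\mathbf{X}\right]\right)=\mathbb{E}\!\left[\operatorname{vec}\mathbf{X}\right]=\operatorname{vec}\mathbf{M}$, and injectivity of $\operatorname{vec}$ yields $\mathbb{E}\!\left[\mathbf{X}\right]=\mathbf{M}$.

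There is essentially no obstacle here; the whole argument is a bookkeeping exercise once Theorem \ref{vecaslap} is available. If one preferred a self-contained derivation, an equally short alternative would be to use the stochastic representation $\mathbf{Y}=\mathbf{M}W+W^{1/2}\mathbf{X}$ from the previous theorem, with $W\sim\mathrm{Exp}(1)$ independent of $\mathbf{X}\sim\mathcal{MN}_{k\times n}(\mathbf{0},\mathbf{\Sigma},\mathbf{\Psi})$, and compute $\mathbb{E}[\mathbf{Y}]=\mathbf{M}\,\mathbb{E}[W]+\mathbb{E}[W^{1/2}]\,\mathbb{E}[\mathbf{X}]=\mathbf{M}\cdot 1+\mathbb{E}[W^{1/2}]\cdot\mathbf{0}=\mathbf{M}$; yet another route would be to differentiate the characteristic function from Theorem \ref{charmataslap} entrywise at $\mathbf{T}=\mathbf{0}$. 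I would choose the vectorization route because it is the shortest and most consistent with the style of the preceding proofs in the paper.
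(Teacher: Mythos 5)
Your proposal is correct and follows essentially the same route as the paper: invoke Theorem \ref{vecaslap} to pass to $\operatorname{vec}\mathbf{X}\sim\mathcal{AL}_{kn}\left(\operatorname{vec}\mathbf{M},\mathbf{\Psi}\otimes\mathbf{\Sigma}\right)$, cite the mean of the multivariate asymmetric Laplace distribution from \cite{kotzas}, and undo the vectorization. Your explicit remarks that $\operatorname{vec}$ commutes with expectation and is injective only make precise a step the paper leaves implicit, so there is nothing to add.
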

\begin{proof}
By Theorem \ref{vecaslap},
\begin{equation*}
\mathbf{X}\sim\mathcal{MAL}_{k\times n}\left(\mathbf{M},\mathbf{\Sigma},\mathbf{\Psi}\right)\Leftrightarrow\operatorname{vec}\left(\mathbf{X}\right)\sim\mathcal{AL}_{kn}\left(\operatorname{vec}\mathbf{M},\mathbf{\Psi}\otimes\mathbf{\Sigma}\right).
\end{equation*}
By the property of the multivariate asymmetric Laplace distribution \cite{kotzas},
\begin{equation*}
 \mathbb{E}\left[\operatorname{vec}\mathbf{X}\right]=\operatorname{vec}\mathbf{M}.
\end{equation*}
Thus,
\begin{equation*}
 \mathbb{E}\left[\mathbf{X}\right]=\mathbf{M}.
\end{equation*}
\end{proof}
\begin{theorem}
\label{kxxaslap}
If  $\mathbf{X}\sim\mathcal{MAL}_{k\times n}\left(\mathbf{M},\mathbf{\Sigma},\mathbf{\Psi}\right)$, then the variance-covariance matrix of the random matrix $\mathbf{X}\in\mathbb{R}^{k\times n}$ is given by
\begin{equation*}
\mathbf{K}_{\mathbf{XX}}=\mathbf{\Psi}\otimes\mathbf{\Sigma}+\left(\operatorname{vec}\mathbf{M}\right)\left(\operatorname{vec}\mathbf{M}\right)^{\mathrm{T}}.
\end{equation*}
\end{theorem}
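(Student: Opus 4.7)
The plan is to exploit the vectorization correspondence established in Theorem \ref{vecaslap}, which converts the matrix variate problem into a multivariate one, and then invoke the known second moment formula for the multivariate asymmetric Laplace distribution from Kotz--Kozubowski--Podgórski. Since $\mathbf{K}_{\mathbf{XX}}$ is defined as the covariance matrix of the random vector $\operatorname{vec}\mathbf{X}$, the reduction is essentially the whole proof.

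First I would apply Theorem \ref{vecaslap} to conclude that
\begin{equation*}
\operatorname{vec}\mathbf{X}\sim\mathcal{AL}_{kn}\bigl(\operatorname{vec}\mathbf{M},\mathbf{\Psi}\otimes\mathbf{\Sigma}\bigr).
\end{equation*}
Next I would recall the standard covariance formula for a multivariate asymmetric Laplace random vector $\mathbf{Y}\sim\mathcal{AL}_d(\mathbf{m},\mathbf{\Gamma})$, namely $\operatorname{Cov}(\mathbf{Y})=\mathbf{\Gamma}+\mathbf{m}\mathbf{m}^{\mathrm{T}}$, as given in \cite{kotzas}. Applied with $\mathbf{m}=\operatorname{vec}\mathbf{M}$ and $\mathbf{\Gamma}=\mathbf{\Psi}\otimes\mathbf{\Sigma}$, this immediately yields
\begin{equation*}
\mathbf{K}_{\mathbf{XX}}=\operatorname{Cov}(\operatorname{vec}\mathbf{X})=\mathbf{\Psi}\otimes\mathbf{\Sigma}+(\operatorname{vec}\mathbf{M})(\operatorname{vec}\mathbf{M})^{\mathrm{T}},
\end{equation*}
which is the claim. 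To make the argument self-contained one could alternatively derive the formula from the stochastic representation $\mathbf{Y}=\mathbf{M}W+W^{1/2}\mathbf{X}$ proved in the previous theorem: vectorizing and using independence of $W\sim\mathrm{Exp}(1)$ from $\operatorname{vec}\mathbf{X}\sim\mathcal{N}_{kn}(\mathbf{0},\mathbf{\Psi}\otimes\mathbf{\Sigma})$ together with $\mathbb{E}[W]=1$ and $\mathbb{E}[W^{2}]=2$ gives $\mathbb{E}[(\operatorname{vec}\mathbf{Y})(\operatorname{vec}\mathbf{Y})^{\mathrm{T}}]=\mathbf{\Psi}\otimes\mathbf{\Sigma}+2(\operatorname{vec}\mathbf{M})(\operatorname{vec}\mathbf{M})^{\mathrm{T}}$; subtracting $(\mathbb{E}\operatorname{vec}\mathbf{Y})(\mathbb{E}\operatorname{vec}\mathbf{Y})^{\mathrm{T}}=(\operatorname{vec}\mathbf{M})(\operatorname{vec}\mathbf{M})^{\mathrm{T}}$ (available from Theorem \ref{expvalueaslap}) produces the desired expression.

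There is no serious obstacle in this proof; the only thing that requires care is ensuring consistency with the convention used by Kotz--Kozubowski--Podgórski for the covariance of the multivariate asymmetric Laplace distribution, so that the coefficient on the rank-one correction is exactly $1$ and not $2$. Once that convention is fixed, the conclusion is a direct transcription of the multivariate result through the Kronecker product identification $\mathbf{\Psi}\otimes\mathbf{\Sigma}$ already used in the preceding theorems.
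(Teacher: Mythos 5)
Your proposal is correct and follows essentially the same route as the paper: reduce via Theorem \ref{vecaslap} to $\operatorname{vec}\mathbf{X}\sim\mathcal{AL}_{kn}\left(\operatorname{vec}\mathbf{M},\mathbf{\Psi}\otimes\mathbf{\Sigma}\right)$ and then quote the covariance formula for the multivariate asymmetric Laplace distribution from \cite{kotzas}. The alternative derivation you sketch from the stochastic representation (using $\mathbb{E}[W]=1$, $\mathbb{E}[W^{2}]=2$) is also correct, but it is supplementary to the argument the paper actually gives.
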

\begin{proof}
By Theorem \ref{vecaslap},
\begin{equation*}
\mathbf{X}\sim\mathcal{MAL}_{k\times n}\left(\mathbf{M},\mathbf{\Sigma},\mathbf{\Psi}\right)\Leftrightarrow\operatorname{vec}\left(\mathbf{X}\right)\sim\mathcal{AL}_{kn}\left(\operatorname{vec}\mathbf{M},\mathbf{\Psi}\otimes\mathbf{\Sigma}\right).
\end{equation*}
By the property of the multivariate asymmetric Laplace distribution \cite{kotzas},
\begin{equation*}
\mathbf{K}_{\operatorname{vec}\mathbf{X},\operatorname{vec}\mathbf{X}}=\mathbf{\Psi}\otimes\mathbf{\Sigma}+\left(\operatorname{vec}\mathbf{M}\right)\left(\operatorname{vec}\mathbf{M}\right)^{\mathrm{T}}.
\end{equation*}
Thus,
\begin{equation*}
\mathbf{K}_{\mathbf{XX}}=\mathbf{\Psi}\otimes\mathbf{\Sigma}+\left(\operatorname{vec}\mathbf{M}\right)\left(\operatorname{vec}\mathbf{M}\right)^{\mathrm{T}}.
\end{equation*}
\end{proof}
\begin{theorem}
Let $\mathbf{D}$ is a rectangular $m\times k$ matrix, $\mathbf{C}$ is a rectangular $n\times p$ matrix, $m\leq k$ and $p\leq n$, then
\begin{equation*}
\mathbf{X}\sim\mathcal{MAL}_{k\times n}\left(\mathbf{M},\mathbf{\Sigma},\mathbf{\Psi}\right)\Leftrightarrow\mathbf{DXC}\sim\mathcal{MAL}_{m\times p}\left(\mathbf{DMC},\mathbf{D\Sigma}\mathbf{D}^{\mathrm{T}},\mathbf{C}^{\mathrm{T}}\mathbf{\Psi C}\right).
\end{equation*}
\end{theorem}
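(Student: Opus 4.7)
The plan is to avoid wrestling directly with the density in Definition~\ref{pmfmataslap} and instead reduce the problem to the characteristic function identity of Theorem~\ref{charmataslap}, since the characteristic function determines the distribution uniquely. Concretely, for an arbitrary test matrix $\mathbf{T}\in\mathbb{R}^{m\times p}$ I would compute $\varphi_{\mathbf{DXC}}(\mathbf{T})$ and show that it coincides with the form assigned by Theorem~\ref{charmataslap} to the distribution $\mathcal{MAL}_{m\times p}(\mathbf{DMC},\mathbf{D}\mathbf{\Sigma}\mathbf{D}^{\mathrm{T}},\mathbf{C}^{\mathrm{T}}\mathbf{\Psi}\mathbf{C})$.

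The key algebraic step is the trace identity $\operatorname{tr}(\mathbf{T}^{\mathrm{T}}\mathbf{DXC})=\operatorname{tr}((\mathbf{D}^{\mathrm{T}}\mathbf{T}\mathbf{C}^{\mathrm{T}})^{\mathrm{T}}\mathbf{X})$, which follows from cyclic invariance of the trace. Setting $\mathbf{T}':=\mathbf{D}^{\mathrm{T}}\mathbf{T}\mathbf{C}^{\mathrm{T}}\in\mathbb{R}^{k\times n}$, this yields $\varphi_{\mathbf{DXC}}(\mathbf{T})=\varphi_{\mathbf{X}}(\mathbf{T}')$. Substituting $\mathbf{T}'$ into the characteristic function from Theorem~\ref{charmataslap} and repeatedly cycling the trace would give
\begin{align*}
\operatorname{tr}(\mathbf{M}^{\mathrm{T}}\mathbf{T}')&=\operatorname{tr}((\mathbf{DMC})^{\mathrm{T}}\mathbf{T}),\\
\operatorname{tr}(\mathbf{\Psi}(\mathbf{T}')^{\mathrm{T}}\mathbf{\Sigma}\mathbf{T}')&=\operatorname{tr}((\mathbf{C}^{\mathrm{T}}\mathbf{\Psi}\mathbf{C})\mathbf{T}^{\mathrm{T}}(\mathbf{D}\mathbf{\Sigma}\mathbf{D}^{\mathrm{T}})\mathbf{T}),
\end{align*}
which is precisely the form of $\varphi$ predicted for the claimed distribution; invoking Theorem~\ref{charmataslap} in reverse then identifies the law of $\mathbf{DXC}$.

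I expect the main obstacle to lie not in the computation itself --- it is routine trace bookkeeping --- but in ensuring that the resulting scale matrices $\mathbf{D}\mathbf{\Sigma}\mathbf{D}^{\mathrm{T}}$ and $\mathbf{C}^{\mathrm{T}}\mathbf{\Psi}\mathbf{C}$ are positive definite, which is required for the target $\mathcal{MAL}_{m\times p}$ distribution to be defined at all. The stated hypotheses $m\le k$ and $p\le n$ are by themselves insufficient; one additionally needs $\mathbf{D}$ to have full row rank $m$ and $\mathbf{C}$ to have full column rank $p$, after which positive-definiteness follows from congruence with the positive-definite $\mathbf{\Sigma}$ and $\mathbf{\Psi}$. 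I would also flag that the reverse implication in the claimed $\Leftrightarrow$ is only meaningful when $\mathbf{D}$ and $\mathbf{C}$ are square and invertible; otherwise $\mathbf{DXC}$ captures only a projection of $\mathbf{X}$ and cannot recover its law, so the biconditional should be read under that tacit assumption or weakened to a one-way implication.
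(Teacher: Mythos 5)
Your proposal follows essentially the same route as the paper: the paper's proof likewise computes $\varphi_{\mathbf{DXC}}\left(\mathbf{T}\right)=\mathbb{E}\left[e^{\mathrm{i}\operatorname{tr}\left(\mathbf{X}\mathbf{C}\mathbf{T}^{\mathrm{T}}\mathbf{D}\right)}\right]$, evaluates the $\mathcal{MAL}$ characteristic function of Theorem~\ref{charmataslap} at the argument $\mathbf{D}^{\mathrm{T}}\mathbf{T}\mathbf{C}^{\mathrm{T}}$, and cycles traces to read off the law of $\mathbf{DXC}$. Your additional caveats --- that $\mathbf{D}$ must have full row rank and $\mathbf{C}$ full column rank for $\mathbf{D}\mathbf{\Sigma}\mathbf{D}^{\mathrm{T}}$ and $\mathbf{C}^{\mathrm{T}}\mathbf{\Psi}\mathbf{C}$ to be positive definite, and that the reverse implication of the stated $\Leftrightarrow$ cannot hold for non-invertible $\mathbf{D}$, $\mathbf{C}$ --- are legitimate criticisms of the theorem as stated that the paper's proof does not address.
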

\begin{proof}
The characteristic function of $\mathbf{DXC}$ is
\begin{equation*}
\varphi_{\mathbf{DXC}}\left(\mathbf{T}\right)=\mathbb{E}\left[e^{\mathrm{i}\operatorname{tr}\left(\mathbf{DXC}\mathbf{T}^{\mathrm{T}}\right)}\right]=\mathbb{E}\left[e^{\mathrm{i}\operatorname{tr}\left(\mathbf{X}\mathbf{C}\mathbf{T}^{\mathrm{T}}\mathbf{D}\right)}\right].
\end{equation*}
By Theorem \ref{charmataslap},
\begin{equation*}
\begin{split}
\varphi_{\mathbf{DXC}}\left(\mathbf{T}\right)&=\frac{1}{1+\frac{1}{2}\operatorname{tr}\left(\mathbf{\Psi}\mathbf{C}\mathbf{T}^{\mathrm{T}}\mathbf{D}\mathbf{\Sigma}\mathbf{D}^{\mathrm{T}}\mathbf{T}\mathbf{C}^{\mathrm{T}}\right)-\mathrm{i}\operatorname{tr}\left(\mathbf{C}\mathbf{T}^{\mathrm{T}}\mathbf{D}\mathbf{M}\right)}\\&=\frac{1}{1+\frac{1}{2}\operatorname{tr}\left(\mathbf{C}^{\mathrm{T}}\mathbf{\Psi}\mathbf{C}\mathbf{T}^{\mathrm{T}}\mathbf{D}\mathbf{\Sigma}\mathbf{D}^{\mathrm{T}}\mathbf{T}\right)-\mathrm{i}\operatorname{tr}\left(\mathbf{T}^{\mathrm{T}}\mathbf{D}\mathbf{M}\mathbf{C}\right)}.
\end{split}
\end{equation*}
Thus,
\begin{equation*}
\mathbf{X}\sim\mathcal{MAL}_{k\times n}\left(\mathbf{M},\mathbf{\Sigma},\mathbf{\Psi}\right)\Leftrightarrow\mathbf{DXC}\sim\mathcal{MAL}_{m\times p}\left(\mathbf{DMC},\mathbf{D\Sigma}\mathbf{D}^{\mathrm{T}},\mathbf{C}^{\mathrm{T}}\mathbf{\Psi C}\right).
\end{equation*}
\end{proof}
\begin{cor}
Let $\mathbf{D}$ is a rectangular $m\times k$ matrix and $m\leq k$, then
\begin{equation*}
\mathbf{X}\sim\mathcal{MAL}_{k\times n}\left(\mathbf{M},\mathbf{\Sigma},\mathbf{\Psi}\right)\Leftrightarrow\mathbf{DX}\sim\mathcal{MAL}_{m\times n}\left(\mathbf{DM},\mathbf{D\Sigma}\mathbf{D}^{\mathrm{T}},\mathbf{\Psi}\right).
\end{equation*}
\end{cor}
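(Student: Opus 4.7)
The plan is to recognise this statement as the special case of the preceding theorem obtained by setting the right multiplier to the identity. Specifically, I take $\mathbf{C} = \mathbf{I}_{n}$, the $n \times n$ identity matrix, which is a rectangular $n \times p$ matrix with $p = n$, and therefore satisfies the hypothesis $p \le n$ of the preceding theorem.

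With this choice, the factors appearing in the conclusion of the preceding theorem reduce cleanly. First, $\mathbf{D}\mathbf{X}\mathbf{C} = \mathbf{D}\mathbf{X}\mathbf{I}_{n} = \mathbf{D}\mathbf{X}$, so the transformed random matrix is exactly $\mathbf{D}\mathbf{X}$. Second, the transformed location parameter becomes $\mathbf{D}\mathbf{M}\mathbf{C} = \mathbf{D}\mathbf{M}$. Third, the left scale parameter $\mathbf{D}\mathbf{\Sigma}\mathbf{D}^{\mathrm{T}}$ is unaffected by the choice of $\mathbf{C}$. Fourth, the right scale parameter becomes $\mathbf{C}^{\mathrm{T}}\mathbf{\Psi}\mathbf{C} = \mathbf{I}_{n}\mathbf{\Psi}\mathbf{I}_{n} = \mathbf{\Psi}$.

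Substituting these simplifications into the equivalence stated by the preceding theorem yields exactly
\begin{equation*}
\mathbf{X}\sim\mathcal{MAL}_{k\times n}\left(\mathbf{M},\mathbf{\Sigma},\mathbf{\Psi}\right)\Leftrightarrow\mathbf{DX}\sim\mathcal{MAL}_{m\times n}\left(\mathbf{DM},\mathbf{D\Sigma}\mathbf{D}^{\mathrm{T}},\mathbf{\Psi}\right),
\end{equation*}
which is the desired corollary. There is no substantive obstacle here: the argument is a one-line specialisation, and the only thing to verify is that $\mathbf{I}_{n}$ is an admissible choice for $\mathbf{C}$ under the dimension constraint $p\le n$, which holds with equality.
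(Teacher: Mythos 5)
Your proof is correct and is exactly the intended argument: the paper states this corollary without a separate proof, treating it as the specialization of the preceding theorem with $\mathbf{C}=\mathbf{I}_{n}$, which is precisely what you have written out. No issues.
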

\begin{cor}
Let $\mathbf{C}$ is a rectangular $n\times p$ matrix and $p\leq n$, then
\begin{equation*}
\mathbf{X}\sim\mathcal{MAL}_{k\times n}\left(\mathbf{M},\mathbf{\Sigma},\mathbf{\Psi}\right)\Leftrightarrow\mathbf{XC}\sim\mathcal{MAL}_{k\times p}\left(\mathbf{MC},\mathbf{\Sigma},\mathbf{C}^{\mathrm{T}}\mathbf{\Psi C}\right).
\end{equation*}
\end{cor}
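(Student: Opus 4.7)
The plan is to derive this corollary as a direct specialization of the preceding theorem, by choosing $\mathbf{D}$ to be the identity matrix on the left. Concretely, I would set $\mathbf{D} = \mathbf{I}_{k}$ (so $m = k$, giving $m \le k$ trivially), while keeping $\mathbf{C}$ as the given $n \times p$ matrix with $p \le n$, so the hypotheses of the previous theorem are satisfied.

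With this choice the transformation $\mathbf{D}\mathbf{X}\mathbf{C}$ collapses to $\mathbf{X}\mathbf{C}$, and the parameters of the resulting distribution simplify as $\mathbf{D}\mathbf{M}\mathbf{C} = \mathbf{M}\mathbf{C}$, $\mathbf{D}\mathbf{\Sigma}\mathbf{D}^{\mathrm{T}} = \mathbf{I}_{k}\mathbf{\Sigma}\mathbf{I}_{k}^{\mathrm{T}} = \mathbf{\Sigma}$, and $\mathbf{C}^{\mathrm{T}}\mathbf{\Psi}\mathbf{C}$ is unchanged. Plugging into the statement of the previous theorem then yields exactly
\begin{equation*}
\mathbf{X}\sim\mathcal{MAL}_{k\times n}\left(\mathbf{M},\mathbf{\Sigma},\mathbf{\Psi}\right)\Leftrightarrow\mathbf{XC}\sim\mathcal{MAL}_{k\times p}\left(\mathbf{MC},\mathbf{\Sigma},\mathbf{C}^{\mathrm{T}}\mathbf{\Psi C}\right),
\end{equation*}
which is the desired biconditional.

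There is essentially no obstacle here, since the reduction to the general bilateral transformation theorem is algebraically immediate; the only thing to verify is that both sides of the equivalence are well-defined, which reduces to checking that $\mathbf{\Sigma}$ remains positive definite (trivially, since it is unchanged) and that $\mathbf{C}^{\mathrm{T}}\mathbf{\Psi}\mathbf{C}$ is positive definite, which follows from positive definiteness of $\mathbf{\Psi}$ together with the implicit assumption that $\mathbf{C}$ has full column rank $p$ (the same assumption already used in the preceding theorem).
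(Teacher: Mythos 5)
Your proof is correct and matches the intended route: the paper states this as an immediate corollary of the preceding theorem (with no written proof), and specializing that theorem to $\mathbf{D}=\mathbf{I}_{k}$ is exactly the argument being invoked. Your added remark about $\mathbf{C}$ needing full column rank for $\mathbf{C}^{\mathrm{T}}\mathbf{\Psi}\mathbf{C}$ to be positive definite is a point the paper leaves implicit.
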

\subsection{Matrix variate generalized asymmetric Laplace distribution}
\begin{defi}
\label{pmfmataslapg}
$\mathbf{X}\sim\mathcal{MGAL}\left(\mathbf{M},\mathbf{\Sigma},\mathbf{\Psi},\lambda\right)$ if and only if the probability density function of the random matrix $\mathbf{X}\in\mathbb{R}^{k\times n}$ is given by
\begin{equation*}
\begin{split}
f_{\mathbf{X}}\left(\mathbf{x}\right)&=\frac{2\operatorname{exp}\operatorname{tr}\left(\mathbf{\Psi}^{-1}\mathbf{x}^{\mathrm{T}}\mathbf{\Sigma}^{-1}\mathbf{M}\right)}{\left(2\pi\right)^{kn/2}\Gamma\left(\lambda\right)\left(\det\mathbf{\Psi}\right)^{k/2}\left(\det\mathbf{\Sigma}\right)^{n/2}}\left(\frac{\operatorname{tr}\left(\mathbf{\Psi}^{-1}\mathbf{x}^{\mathrm{T}}\mathbf{\Sigma}^{-1}\mathbf{x}\right)}{2+\operatorname{tr}\left(\mathbf{\Psi}^{-1}\mathbf{M}^{\mathrm{T}}\mathbf{\Sigma}^{-1}\mathbf{M}\right)}\right)^{\lambda/2 -kn/4}\\&\times K_{\lambda -kn/2}\left(\sqrt{\left(2+\operatorname{tr}\left(\mathbf{\Psi}^{-1}\mathbf{M}^{\mathrm{T}}\mathbf{\Sigma}^{-1}\mathbf{M}\right)\right)\operatorname{tr}\left(\mathbf{\Psi}^{-1}\mathbf{x}^{\mathrm{T}}\mathbf{\Sigma}^{-1}\mathbf{x}\right)}\right),
\end{split}
\end{equation*}
where  $K_{\lambda -kn/2}(\cdot)$ is the modified Bessel function of the third kind.
\end{defi}
\begin{theorem}
\label{vecaslapg}
$\mathbf{X}\sim\mathcal{MGAL}_{k\times n}\left(\mathbf{M},\mathbf{\Sigma},\mathbf{\Psi},\lambda\right)\Leftrightarrow\operatorname{vec}\left(\mathbf{X}\right)\sim\mathcal{GAL}_{kn}\left(\operatorname{vec}\mathbf{M},\mathbf{\Psi}\otimes\mathbf{\Sigma},\lambda\right).$
\end{theorem}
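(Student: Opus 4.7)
The plan is to follow the same template as the proof of Theorem \ref{vecaslap}, since the only difference between $\mathcal{MAL}$ and $\mathcal{MGAL}$ is the appearance of the parameter $\lambda$ (which replaces $1$ in two places: the Bessel index and the exponent of the ratio) and a normalizing factor $\Gamma(\lambda)$. First I would write down the density of $\mathbf{X}$ as given by Definition \ref{pmfmataslapg}, and then write down, on the other side, the density of a $kn$-dimensional multivariate generalized asymmetric Laplace vector with parameters $\operatorname{vec}\mathbf{M}$, $\mathbf{\Psi}\otimes\mathbf{\Sigma}$, $\lambda$, as defined in \cite{kotzas}. The goal is then to show these two densities are equal as functions of $\mathbf{x}$ (equivalently of $\operatorname{vec}\mathbf{x}$).

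Second, I would reuse the three trace-to-quadratic-form identities already established in the proof of Theorem \ref{vecaslap}, namely
\begin{equation*}
\operatorname{tr}\left(\mathbf{\Psi}^{-1}\mathbf{A}^{\mathrm{T}}\mathbf{\Sigma}^{-1}\mathbf{B}\right)=\left(\operatorname{vec}\mathbf{A}\right)^{\mathrm{T}}\left(\mathbf{\Psi}\otimes\mathbf{\Sigma}\right)^{-1}\left(\operatorname{vec}\mathbf{B}\right),
\end{equation*}
applied once with $\mathbf{A}=\mathbf{B}=\mathbf{x}$, once with $\mathbf{A}=\mathbf{B}=\mathbf{M}$, and once with $\mathbf{A}=\mathbf{x}$, $\mathbf{B}=\mathbf{M}$. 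These rewrite the exponential factor, the ratio inside the power, and the argument of the Bessel function so that they match the multivariate form exactly.

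Third, the normalizing constants need to be reconciled. The factors $2$, $(2\pi)^{kn/2}$, and $\Gamma(\lambda)$ already appear identically on both sides, so the only nontrivial piece is the determinant factor. This is handled by the identity
\begin{equation*}
\left(\det\mathbf{\Psi}\right)^{k/2}\left(\det\mathbf{\Sigma}\right)^{n/2}=\left(\det\left(\mathbf{\Psi}\otimes\mathbf{\Sigma}\right)\right)^{1/2},
\end{equation*}
again borrowed from the proof of Theorem \ref{vecaslap}. With the Bessel index $\lambda-kn/2$ and exponent $\lambda/2-kn/4$ appearing identically in both densities (these parameters are literally transported through the substitution), the two density expressions coincide, which establishes the equivalence.

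There is no real obstacle: the step that could look substantive, namely matching the Bessel index and the ratio exponent, is automatic because Definition \ref{pmfmataslapg} was set up precisely so that these agree under vectorization. The proof is therefore essentially a textual repetition of the proof of Theorem \ref{vecaslap} with the symbol $1$ replaced by $\lambda$ in the Bessel index and the ratio exponent, and the insertion of the $\Gamma(\lambda)$ factor. I expect the write-up to be short and mechanical.
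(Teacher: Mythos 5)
Your proposal is correct and follows essentially the same route as the paper's own proof: write out the density from Definition \ref{pmfmataslapg}, write out the multivariate generalized asymmetric Laplace density for $\operatorname{vec}\mathbf{X}$, and match them via the trace--Kronecker--vectorization identities and the determinant identity $\left(\det\mathbf{\Psi}\right)^{k/2}\left(\det\mathbf{\Sigma}\right)^{n/2}=\left(\det\left(\mathbf{\Psi}\otimes\mathbf{\Sigma}\right)\right)^{1/2}$. The only discrepancy is bibliographic: the generalized ($\lambda$-indexed) multivariate density should be sourced from \cite{kozub} rather than \cite{kotzas}, which covers only the $\lambda=1$ case.
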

\begin{proof}
By Definition \ref{pmfmataslapg}, $\mathbf{X}\sim\mathcal{MGAL}_{k\times n}\left(\mathbf{M},\mathbf{\Sigma},\mathbf{\Psi},s\right)$ if and only if
\begin{equation*}
\begin{split}
f_{\mathbf{X}}\left(\mathbf{x}\right)&=\frac{2\operatorname{exp}\operatorname{tr}\left(\mathbf{\Psi}^{-1}\mathbf{x}^{\mathrm{T}}\mathbf{\Sigma}^{-1}\mathbf{M}\right)}{\left(2\pi\right)^{kn/2}\Gamma\left(\lambda\right)\left(\det\mathbf{\Psi}\right)^{k/2}\left(\det\mathbf{\Sigma}\right)^{n/2}}\left(\frac{\operatorname{tr}\left(\mathbf{\Psi}^{-1}\mathbf{x}^{\mathrm{T}}\mathbf{\Sigma}^{-1}\mathbf{x}\right)}{2+\operatorname{tr}\left(\mathbf{\Psi}^{-1}\mathbf{M}^{\mathrm{T}}\mathbf{\Sigma}^{-1}\mathbf{M}\right)}\right)^{\lambda/2 -kn/4}\\&\times K_{\lambda -kn/2}\left(\sqrt{\left(2+\operatorname{tr}\left(\mathbf{\Psi}^{-1}\mathbf{M}^{\mathrm{T}}\mathbf{\Sigma}^{-1}\mathbf{M}\right)\right)\operatorname{tr}\left(\mathbf{\Psi}^{-1}\mathbf{x}^{\mathrm{T}}\mathbf{\Sigma}^{-1}\mathbf{x}\right)}\right),
\end{split}
\end{equation*}
where  $K_{\lambda -kn/2}(\cdot)$ is the modified Bessel function of the third kind.

By the definition of the multivariate asymmetric Laplace distribution \cite{kozub}, $\operatorname{vec}\left(\mathbf{X}\right)\sim\mathcal{GAL}_{kn}\left(\operatorname{vec}\mathbf{M},\mathbf{\Psi}\otimes\mathbf{\Sigma},s\right)$ if and only if
\begin{equation*}
\begin{split}
 f_{\operatorname{vec}\mathbf{X}}\left(\operatorname{vec}\mathbf{x}\right)&=\frac{2\operatorname{exp}\left(\left(\operatorname{vec}\mathbf{x}\right)^{\mathrm{T}}\left(\mathbf{\Psi}\otimes\mathbf{\Sigma}\right)^{-1}\left(\operatorname{vec}\mathbf{M}\right)\right)}{\left(2\pi\right)^{kn/2}\Gamma\left(\lambda\right)\left(\det\left(\mathbf{\Psi}\otimes\mathbf{\Sigma}\right)\right)^{1/2}}\left(\frac{\left(\operatorname{vec}\mathbf{x}\right)^{\mathrm{T}}\left(\mathbf{\Psi}\otimes\mathbf{\Sigma}\right)^{-1}\left(\operatorname{vec}\mathbf{x}\right)}{2+\left(\operatorname{vec}\mathbf{M}\right)^{\mathrm{T}}\left(\mathbf{\Psi}\otimes\mathbf{\Sigma}\right)^{-1}\left(\operatorname{vec}\mathbf{M}\right)}\right)^{\lambda/2 -kn/4}\\& \times K_{\lambda -kn/2}\left(\sqrt{\left(2+\left(\operatorname{vec}\mathbf{M}\right)^{\mathrm{T}}\left(\mathbf{\Psi}\otimes\mathbf{\Sigma}\right)^{-1}\left(\operatorname{vec}\mathbf{M}\right)\right)\left(\left(\operatorname{vec}\mathbf{x}\right)^{\mathrm{T}}\left(\mathbf{\Psi}\otimes\mathbf{\Sigma}\right)^{-1}\left(\operatorname{vec}\mathbf{x}\right)\right)}\right),
\end{split}
\end{equation*}
where  $K_{\lambda -kn/2}(\cdot)$ is the modified Bessel function of the third kind.

By the properties of the trace, Kronecker product and vectorization \cite[p.~2-12]{gupta},
\begin{equation*}
\begin{split}
&\operatorname{tr}\left(\mathbf{\Psi}^{-1}\mathbf{A}^{\mathrm{T}}\mathbf{\Sigma}^{-1}\mathbf{B}\right)\\&
=\left(\operatorname{vec}\mathbf{A}\right)^{\mathrm{T}}\operatorname{vec}\left(\mathbf{\Sigma}^{-1}\mathbf{B}\mathbf{\Psi}^{-1}\right)\\&
=\left(\operatorname{vec}\mathbf{A}\right)^{\mathrm{T}}\left(\left(\mathbf{\Psi}^{-1}\right)^{\mathrm{T}}\otimes\mathbf{\Sigma}^{-1}\right)\left(\operatorname{vec}\mathbf{B}\right)\\&
=\left(\operatorname{vec}\mathbf{A}\right)^{\mathrm{T}}\left(\mathbf{\Psi}^{\mathrm{T}}\otimes\mathbf{\Sigma}\right)^{-1}\left(\operatorname{vec}\mathbf{B}\right)\\&
=\left(\operatorname{vec}\mathbf{A}\right)^{\mathrm{T}}\left(\mathbf{\Psi}\otimes\mathbf{\Sigma}\right)^{-1}\left(\operatorname{vec}\mathbf{B}\right).
\end{split}
\end{equation*}
Therefore,
\begin{equation*}
\begin{split}
\operatorname{tr}\left(\mathbf{\Psi}^{-1}\mathbf{x}^{\mathrm{T}}\mathbf{\Sigma}^{-1}\mathbf{x}\right)=\left(\operatorname{vec}\mathbf{x}\right)^{\mathrm{T}}\left(\mathbf{\Psi}\otimes\mathbf{\Sigma}\right)^{-1}\left(\operatorname{vec}\mathbf{x}\right),
\end{split}
\end{equation*}
\begin{equation*}
\begin{split}
\operatorname{tr}\left(\mathbf{\Psi}^{-1}\mathbf{M}^{\mathrm{T}}\mathbf{\Sigma}^{-1}\mathbf{M}\right)=\left(\operatorname{vec}\mathbf{M}\right)^{\mathrm{T}}\left(\mathbf{\Psi}\otimes\mathbf{\Sigma}\right)^{-1}\left(\operatorname{vec}\mathbf{M}\right),
\end{split}
\end{equation*}
and
\begin{equation*}
\begin{split}
\operatorname{tr}\left(\mathbf{\Psi}^{-1}\mathbf{x}^{\mathrm{T}}\mathbf{\Sigma}^{-1}\mathbf{M}\right)=\left(\operatorname{vec}\mathbf{x}\right)^{\mathrm{T}}\left(\mathbf{\Psi}\otimes\mathbf{\Sigma}\right)^{-1}\left(\operatorname{vec}\mathbf{M}\right).
\end{split}
\end{equation*}
By the property of the determinant \cite[p.~2-12]{gupta},
\begin{equation*}
\begin{split}
\left(\det\mathbf{\Psi}\right)^{k/2}\left(\det\mathbf{\Sigma}\right)^{n/2}
=\left(\det\left(\mathbf{\Psi}\otimes\mathbf{\Sigma}\right)\right)^{1/2}.
\end{split}
\end{equation*}
Thus, $\mathbf{X}\sim\mathcal{MGAL}_{k\times n}\left(\mathbf{M},\mathbf{\Sigma},\mathbf{\Psi},\lambda\right)\Leftrightarrow\operatorname{vec}\left(\mathbf{X}\right)\sim\mathcal{GAL}_{kn}\left(\operatorname{vec}\mathbf{M},\mathbf{\Psi}\otimes\mathbf{\Sigma},\lambda\right).$
\end{proof}
\begin{theorem}
Let $\mathbf{Y}\sim\mathcal{MGAL}_{k\times n}\left(\mathbf{M},\mathbf{\Sigma},\mathbf{\Psi},\lambda\right)$, $\mathbf{X}\sim\mathcal{MN}_{k\times n}\left(\mathbf{0},\mathbf{\Sigma},\mathbf{\Psi}\right)$ and $W$ has the standart gamma distribution with shape parameter $\lambda$, independent of $\mathbf{X}$, then 
\begin{equation*}
\mathbf{Y}=\mathbf{M}W+W^{1/2}\mathbf{X}.
\end{equation*}
\end{theorem}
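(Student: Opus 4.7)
The plan is to follow exactly the template of the analogous theorem for the $\mathcal{MAL}$ case, replacing the exponential mixing variable by a gamma one and invoking the generalized multivariate representation in place of the $\mathcal{AL}$ one. So I would start by setting $\mathbf{Y}=\mathbf{M}W+W^{1/2}\mathbf{X}$ and apply $\operatorname{vec}$ to both sides. Since $\operatorname{vec}$ is linear and $W,W^{1/2}$ are scalar, this yields
\begin{equation*}
\operatorname{vec}\mathbf{Y}=(\operatorname{vec}\mathbf{M})W+W^{1/2}\operatorname{vec}\mathbf{X}.
\end{equation*}

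Next I would translate the matrix normal hypothesis on $\mathbf{X}$ into a multivariate normal statement on $\operatorname{vec}\mathbf{X}$, using the standard equivalence $\mathbf{X}\sim\mathcal{MN}_{k\times n}(\mathbf{0},\mathbf{\Sigma},\mathbf{\Psi})\Leftrightarrow\operatorname{vec}\mathbf{X}\sim\mathcal{N}_{kn}(\mathbf{0},\mathbf{\Psi}\otimes\mathbf{\Sigma})$ cited from Gupta. The independence of $W$ and $\mathbf{X}$ transfers to independence of $W$ and $\operatorname{vec}\mathbf{X}$.

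At this point the vectorized identity becomes the standard variance--gamma / generalized asymmetric Laplace mixture representation. The key invocation is the multivariate analogue of Theorem~6.3.1 of Kotz--Kozubowski--Podg\'orski for $\mathcal{GAL}$, which says that if $\mathbf{Z}\sim\mathcal{N}_{kn}(\mathbf{0},\mathbf{\Psi}\otimes\mathbf{\Sigma})$ and $W\sim\Gamma(\lambda,1)$ independent of $\mathbf{Z}$, then $(\operatorname{vec}\mathbf{M})W+W^{1/2}\mathbf{Z}\sim\mathcal{GAL}_{kn}(\operatorname{vec}\mathbf{M},\mathbf{\Psi}\otimes\mathbf{\Sigma},\lambda)$. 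Applying this gives
\begin{equation*}
\operatorname{vec}\mathbf{Y}\sim\mathcal{GAL}_{kn}(\operatorname{vec}\mathbf{M},\mathbf{\Psi}\otimes\mathbf{\Sigma},\lambda).
\end{equation*}

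Finally I would close by invoking Theorem~\ref{vecaslapg}, which gives the equivalence $\mathbf{Y}\sim\mathcal{MGAL}_{k\times n}(\mathbf{M},\mathbf{\Sigma},\mathbf{\Psi},\lambda)\Leftrightarrow\operatorname{vec}\mathbf{Y}\sim\mathcal{GAL}_{kn}(\operatorname{vec}\mathbf{M},\mathbf{\Psi}\otimes\mathbf{\Sigma},\lambda)$, and thereby conclude the desired distributional identity. The only nontrivial step is citing the generalized (gamma-mixed) normal representation for $\mathcal{GAL}$; everything else is routine vectorization and independence bookkeeping. If that exact citation is unavailable in the source already referenced, the fallback would be a direct characteristic function computation: condition on $W$, use the Gaussian characteristic function to get $\exp(\mathrm{i}W(\operatorname{vec}\mathbf{M})^{\mathrm{T}}(\operatorname{vec}\mathbf{T}) - \tfrac{W}{2}(\operatorname{vec}\mathbf{T})^{\mathrm{T}}(\mathbf{\Psi}\otimes\mathbf{\Sigma})(\operatorname{vec}\mathbf{T}))$, and take the $\Gamma(\lambda,1)$ Laplace transform in $W$ to recover the $\mathcal{GAL}$ characteristic function $\bigl(1+\tfrac{1}{2}(\operatorname{vec}\mathbf{T})^{\mathrm{T}}(\mathbf{\Psi}\otimes\mathbf{\Sigma})(\operatorname{vec}\mathbf{T})-\mathrm{i}(\operatorname{vec}\mathbf{M})^{\mathrm{T}}(\operatorname{vec}\mathbf{T})\bigr)^{-\lambda}$.
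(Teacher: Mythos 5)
Your proposal matches the paper's proof essentially step for step: vectorize the representation, convert the matrix normal hypothesis to $\operatorname{vec}\mathbf{X}\sim\mathcal{N}_{kn}\left(\mathbf{0},\mathbf{\Psi}\otimes\mathbf{\Sigma}\right)$, invoke the gamma-mixture representation of the multivariate $\mathcal{GAL}$ law from Kozubowski et al., and conclude via Theorem \ref{vecaslapg}. The fallback characteristic-function computation you sketch is a sound extra safeguard but is not needed and is not used by the paper.
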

\begin{proof}
Let $\mathbf{Y}=\mathbf{M}W+W^{1/2}\mathbf{X}$.

Therefore,
\begin{equation*}
\operatorname{vec}\mathbf{Y}=\operatorname{vec}\left(\mathbf{M}W+W^{1/2}\mathbf{X}\right)=\left(\operatorname{vec}\mathbf{M}\right)W+W^{1/2}\operatorname{vec}\mathbf{X}.
\end{equation*}
By the definition of the matrix normal distribution \cite{gupta},
\begin{equation*}
\mathbf{X}\sim\mathcal{MN}_{k\times n}\left(\mathbf{M},\mathbf{\Sigma},\mathbf{\Psi}\right)\Leftrightarrow\operatorname{vec}\left(\mathbf{X}\right)\sim\mathcal{N}_{kn}\left(\operatorname{vec}\mathbf{M},\mathbf{\Psi}\otimes\mathbf{\Sigma}\right).
\end{equation*}
By the theorem \cite{kozub},
\begin{equation*}
\operatorname{vec}\left(\mathbf{Y}\right)\sim\mathcal{GAL}_{kn}\left(\operatorname{vec}\mathbf{M},\mathbf{\Psi}\otimes\mathbf{\Sigma},\lambda\right).
\end{equation*}
By Theorem \ref{vecaslapg},
\begin{equation*}
\mathbf{Y}\sim\mathcal{MGAL}_{k\times n}\left(\mathbf{M},\mathbf{\Sigma},\mathbf{\Psi},\lambda\right)\Leftrightarrow\operatorname{vec}\left(\mathbf{Y}\right)\sim\mathcal{GAL}_{kn}\left(\operatorname{vec}\mathbf{M},\mathbf{\Psi}\otimes\mathbf{\Sigma},\lambda\right).
\end{equation*}
\end{proof}
\begin{theorem}
\label{charmataslapg}
Let $\mathbf{T}\in \mathbb{R}^{k\times n}$. $\mathbf{X}\sim\mathcal{MGAL}_{k\times n}\left(\mathbf{M},\mathbf{\Sigma},\mathbf{\Psi},\lambda\right)$ if and only if the characteristic function of the random matrix $\mathbf{X}\in\mathbb{R}^{k\times n}$  is given by
\begin{equation*}
\varphi_{\mathbf{X}}\left(\mathbf{T}\right)=\left(\frac{1}{1+\frac{1}{2}\operatorname{tr}\left(\mathbf{\Psi}\mathbf{T}^{\mathrm{T}}\mathbf{\Sigma}\mathbf{T}\right)-\mathrm{i}\operatorname{tr}\left(\mathbf{M}^{\mathrm{T}}\mathbf{T}\right)}\right)^{\lambda}.
\end{equation*}
\end{theorem}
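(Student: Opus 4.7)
The plan is to mirror the proof of Theorem \ref{charmataslap} almost verbatim, since Theorem \ref{vecaslapg} already reduces the matrix case to the multivariate generalized asymmetric Laplace case. Concretely, I would first observe that by Theorem \ref{vecaslapg},
\begin{equation*}
\mathbf{X}\sim\mathcal{MGAL}_{k\times n}\left(\mathbf{M},\mathbf{\Sigma},\mathbf{\Psi},\lambda\right)\Leftrightarrow\operatorname{vec}\left(\mathbf{X}\right)\sim\mathcal{GAL}_{kn}\left(\operatorname{vec}\mathbf{M},\mathbf{\Psi}\otimes\mathbf{\Sigma},\lambda\right),
\end{equation*}
and then quote from \cite{kozub} the known characteristic function of the multivariate generalized asymmetric Laplace distribution, namely
\begin{equation*}
\varphi_{\operatorname{vec}\mathbf{X}}\left(\operatorname{vec}\mathbf{T}\right)=\left(\frac{1}{1+\frac{1}{2}\left(\operatorname{vec}\mathbf{T}\right)^{\mathrm{T}}\left(\mathbf{\Psi}\otimes\mathbf{\Sigma}\right)\left(\operatorname{vec}\mathbf{T}\right)-\mathrm{i}\left(\operatorname{vec}\mathbf{M}\right)^{\mathrm{T}}\left(\operatorname{vec}\mathbf{T}\right)}\right)^{\lambda}.
\end{equation*}

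The next step is to rewrite the two inner products in trace form. Using $\operatorname{tr}(\mathbf{M}^{\mathrm{T}}\mathbf{T})=(\operatorname{vec}\mathbf{M})^{\mathrm{T}}(\operatorname{vec}\mathbf{T})$ handles the linear term, and the quadratic Kronecker form is handled by the standard identity
\begin{equation*}
(\operatorname{vec}\mathbf{T})^{\mathrm{T}}(\mathbf{\Psi}\otimes\mathbf{\Sigma})(\operatorname{vec}\mathbf{T})=\operatorname{tr}(\mathbf{\Psi}^{\mathrm{T}}\mathbf{T}^{\mathrm{T}}\mathbf{\Sigma}\mathbf{T})=\operatorname{tr}(\mathbf{\Psi}\mathbf{T}^{\mathrm{T}}\mathbf{\Sigma}\mathbf{T}),
\end{equation*}
where the last equality uses the symmetry of $\mathbf{\Psi}$ (which is positive-definite). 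This is the analogue of the trace/Kronecker manipulation already carried out in the proofs of Theorems \ref{vecaslap} and \ref{vecaslapg}, only now without the inverses.

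Finally, I would use that the matrix characteristic function evaluated at $\mathbf{T}$ agrees with the vectorized one evaluated at $\operatorname{vec}\mathbf{T}$, since
\begin{equation*}
\varphi_{\mathbf{X}}(\mathbf{T})=\mathbb{E}\left[e^{\mathrm{i}\operatorname{tr}(\mathbf{X}\mathbf{T}^{\mathrm{T}})}\right]=\mathbb{E}\left[e^{\mathrm{i}(\operatorname{vec}\mathbf{T})^{\mathrm{T}}\operatorname{vec}\mathbf{X}}\right]=\varphi_{\operatorname{vec}\mathbf{X}}(\operatorname{vec}\mathbf{T}),
\end{equation*}
and substitute the trace expressions obtained above to arrive at the desired formula. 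The argument contains no real obstacle: the step that requires the most care is keeping track of the transpose inside the Kronecker identity so that $\operatorname{tr}(\mathbf{\Psi}\mathbf{T}^{\mathrm{T}}\mathbf{\Sigma}\mathbf{T})$ (and not, say, $\operatorname{tr}(\mathbf{\Psi}\mathbf{T}\mathbf{\Sigma}\mathbf{T}^{\mathrm{T}})$) appears, but symmetry of $\mathbf{\Psi}$ and $\mathbf{\Sigma}$ makes both expressions equal anyway, so this is only a cosmetic point.
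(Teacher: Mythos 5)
Your proposal is correct and follows essentially the same route as the paper: reduce to the vectorized case via Theorem \ref{vecaslapg}, quote the characteristic function of $\mathcal{GAL}_{kn}$ from \cite{kozub}, and rewrite the quadratic and linear forms as traces. You actually spell out the Kronecker/trace identity and the equality $\varphi_{\mathbf{X}}(\mathbf{T})=\varphi_{\operatorname{vec}\mathbf{X}}(\operatorname{vec}\mathbf{T})$ more explicitly than the paper does, which only strengthens the argument.
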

\begin{proof}
By Theorem \ref{vecaslapg},
\begin{equation*}
\mathbf{X}\sim\mathcal{MGAL}_{k\times n}\left(\mathbf{M},\mathbf{\Sigma},\mathbf{\Psi},\lambda\right)\Leftrightarrow\operatorname{vec}\left(\mathbf{X}\right)\sim\mathcal{GAL}_{kn}\left(\operatorname{vec}\mathbf{M},\mathbf{\Psi}\otimes\mathbf{\Sigma},\lambda\right).
\end{equation*}
By the definition of the multivariate generalized asymmetric Laplace distribution \cite{kozub},
\begin{equation*}
\varphi_{\operatorname{vec}\mathbf{X}}\left(\operatorname{vec}\mathbf{T}\right)=\left(\frac{1}{1+\frac{1}{2}\left(\operatorname{vec}\mathbf{T}\right)^{\mathrm{T}}\left(\mathbf{\Psi}\otimes\mathbf{\Sigma}\right)\left(\operatorname{vec}\mathbf{T}\right)-\mathrm{i}\left(\operatorname{vec}\mathbf{M}\right)^{\mathrm{T}}\left(\operatorname{vec}\mathbf{T}\right)}\right)^{\lambda}.
\end{equation*}
Thus,
\begin{equation*}
\varphi_{\operatorname{vec}\mathbf{X}}\left(\operatorname{vec}\mathbf{T}\right)=\varphi_{\mathbf{X}}\left(\mathbf{T}\right)=\left(\frac{1}{1+\frac{1}{2}\operatorname{tr}\left(\mathbf{\Psi}\mathbf{T}^{\mathrm{T}}\mathbf{\Sigma}\mathbf{T}\right)-\mathrm{i}\operatorname{tr}\left(\mathbf{M}^{\mathrm{T}}\mathbf{T}\right)}\right)^{\lambda}.
\end{equation*}
\end{proof}
\begin{theorem}
$\mathbf{X}\sim\mathcal{MGAL}_{k\times n}\left(\mathbf{M},\mathbf{\Sigma},\mathbf{\Psi},1\right)\Leftrightarrow\mathbf{X}\sim\mathcal{MAL}_{k\times n}\left(\mathbf{M},\mathbf{\Sigma},\mathbf{\Psi}\right).$
\begin{proof}
By Theorem \ref{charmataslapg}, $\mathbf{X}\sim\mathcal{MGAL}_{k\times n}\left(\mathbf{M},\mathbf{\Sigma},\mathbf{\Psi},1\right)$ if and only if
\begin{equation*}
\begin{split}
\varphi_{\mathbf{X}}\left(\mathbf{T}\right)&=\left(\frac{1}{1+\frac{1}{2}\operatorname{tr}\left(\mathbf{\Psi}\mathbf{T}^{\mathrm{T}}\mathbf{\Sigma}\mathbf{T}\right)-\mathrm{i}\operatorname{tr}\left(\mathbf{M}^{\mathrm{T}}\mathbf{T}\right)}\right)^{1}\\&=\frac{1}{1+\frac{1}{2}\operatorname{tr}\left(\mathbf{\Psi}\mathbf{T}^{\mathrm{T}}\mathbf{\Sigma}\mathbf{T}\right)-\mathrm{i}\operatorname{tr}\left(\mathbf{M}^{\mathrm{T}}\mathbf{T}\right)}.
\end{split}
\end{equation*}
By Theorem \ref{charmataslap}, $\mathbf{X}\sim\mathcal{MAL}_{k\times n}\left(\mathbf{M},\mathbf{\Sigma},\mathbf{\Psi}\right)$ if and only if
\begin{equation*}
\begin{split}
\varphi_{\mathbf{X}}\left(\mathbf{T}\right)=\frac{1}{1+\frac{1}{2}\operatorname{tr}\left(\mathbf{\Psi}\mathbf{T}^{\mathrm{T}}\mathbf{\Sigma}\mathbf{T}\right)-\mathrm{i}\operatorname{tr}\left(\mathbf{M}^{\mathrm{T}}\mathbf{T}\right)}.
\end{split}
\end{equation*}
Thus, $\mathbf{X}\sim\mathcal{MGAL}_{k\times n}\left(\mathbf{M},\mathbf{\Sigma},\mathbf{\Psi},1\right)\Leftrightarrow\mathbf{X}\sim\mathcal{MAL}_{k\times n}\left(\mathbf{M},\mathbf{\Sigma},\mathbf{\Psi}\right).$
\end{proof}
\end{theorem}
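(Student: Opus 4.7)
The plan is to prove this equivalence via characteristic functions, since both distributions have already been characterized by closed-form characteristic functions in Theorems~\ref{charmataslap} and~\ref{charmataslapg}. This is the easiest route because the density formulas in Definitions~\ref{pmfmataslap} and~\ref{pmfmataslapg} differ only by the factor $\Gamma(\lambda)$ in the denominator and by the Bessel index $\lambda - kn/2$ and power $\lambda/2 - kn/4$; checking that these collapse to the $\mathcal{MAL}$ density when $\lambda = 1$ would require invoking $\Gamma(1) = 1$ and re-indexing the Bessel function, which is more bookkeeping than the characteristic-function argument.

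First, I would invoke Theorem~\ref{charmataslapg} with $\lambda = 1$ to get
\begin{equation*}
\varphi_{\mathbf{X}}(\mathbf{T}) = \left(\frac{1}{1+\tfrac{1}{2}\operatorname{tr}(\mathbf{\Psi}\mathbf{T}^{\mathrm{T}}\mathbf{\Sigma}\mathbf{T}) - \mathrm{i}\operatorname{tr}(\mathbf{M}^{\mathrm{T}}\mathbf{T})}\right)^{1},
\end{equation*}
and simplify the outer exponent of $1$. Then I would compare this expression directly to the characteristic function of $\mathcal{MAL}_{k\times n}(\mathbf{M},\mathbf{\Sigma},\mathbf{\Psi})$ given in Theorem~\ref{charmataslap}, observing that the two expressions are identical.

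Finally, by the uniqueness of characteristic functions, the equality $\varphi_{\mathbf{X}}^{\mathcal{MGAL}(\cdot,\cdot,\cdot,1)}(\mathbf{T}) = \varphi_{\mathbf{X}}^{\mathcal{MAL}(\cdot,\cdot,\cdot)}(\mathbf{T})$ for every $\mathbf{T}\in\mathbb{R}^{k\times n}$ implies equality in distribution, giving the required equivalence in both directions. There is no real obstacle here: the only thing to watch is to cite both Theorem~\ref{charmataslap} and Theorem~\ref{charmataslapg} in the correct directions (each is an if-and-only-if), so that substituting $\lambda = 1$ into one characteristic function and matching it with the other closes the logical loop without relying on the densities at all.
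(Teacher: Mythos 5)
Your proposal matches the paper's proof exactly: both invoke Theorem~\ref{charmataslapg} with $\lambda=1$, simplify the exponent, and match the result against the characteristic function in Theorem~\ref{charmataslap}, using the if-and-only-if nature of both characterizations to close the equivalence. No gaps; this is the same argument.
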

\begin{theorem}
\label{expvalueaslapg}
If $\mathbf{X}\sim\mathcal{MGAL}\left(\mathbf{M},\mathbf{\Sigma},\mathbf{\Psi},\lambda\right)$, then the expected value of the random matrix $\mathbf{X}\in\mathbb{R}^{k\times n}$ is given by
\begin{equation*}
 \mathbb{E}\left[\mathbf{X}\right]=\lambda\mathbf{M}.
\end{equation*}
\end{theorem}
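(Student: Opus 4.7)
The plan is to mirror the strategy already used for Theorem \ref{expvalueaslap}: reduce the matrix-variate question to a multivariate one by vectorization, quote the corresponding multivariate result, and then un-vectorize. The only structural difference from the symmetric-case proof is that the prefactor $\lambda$ must be carried through, which originates in the multivariate $\mathcal{GAL}$ mean formula rather than in any matrix-algebra manipulation.

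Concretely, I would first invoke Theorem \ref{vecaslapg} to pass from
\begin{equation*}
\mathbf{X}\sim\mathcal{MGAL}_{k\times n}\left(\mathbf{M},\mathbf{\Sigma},\mathbf{\Psi},\lambda\right)
\end{equation*}
to
\begin{equation*}
\operatorname{vec}\left(\mathbf{X}\right)\sim\mathcal{GAL}_{kn}\left(\operatorname{vec}\mathbf{M},\mathbf{\Psi}\otimes\mathbf{\Sigma},\lambda\right).
\end{equation*}
Then I would cite the known first-moment formula for the multivariate generalized asymmetric Laplace distribution from \cite{kozub}, namely $\mathbb{E}\left[\operatorname{vec}\mathbf{X}\right]=\lambda\operatorname{vec}\mathbf{M}$. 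Since vectorization is a linear bijection that commutes with entrywise expectation, we have $\operatorname{vec}\mathbb{E}\left[\mathbf{X}\right]=\mathbb{E}\left[\operatorname{vec}\mathbf{X}\right]=\lambda\operatorname{vec}\mathbf{M}=\operatorname{vec}\left(\lambda\mathbf{M}\right)$, and applying the inverse of $\operatorname{vec}$ yields $\mathbb{E}\left[\mathbf{X}\right]=\lambda\mathbf{M}$, as required.

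As an alternative route, one could differentiate the characteristic function given in Theorem \ref{charmataslapg} at $\mathbf{T}=\mathbf{0}$ and extract the mean from the coefficient of $\mathrm{i}\operatorname{tr}\left(\mathbf{M}^{\mathrm{T}}\mathbf{T}\right)$; the factor $\lambda$ would then emerge from the outer exponent via the chain rule. This route is longer and offers nothing new over the vectorization argument, so I would not use it in the main proof. The substantive content sits entirely in the cited multivariate mean formula; the extra factor $\lambda$ (absent in Theorem \ref{expvalueaslap}) reflects that in the mixture representation the subordinator is $\operatorname{Gamma}(\lambda)$ with mean $\lambda$ rather than $\operatorname{Exp}(1)$.

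I do not expect a genuine obstacle: the work is essentially bookkeeping. The only point that requires care is to ensure that the expectation of $\operatorname{vec}\mathbf{X}$ quoted from \cite{kozub} is the generalized (not plain) asymmetric Laplace mean, so that the $\lambda$ is correctly present; with that in hand, the identity $\mathbb{E}\left[\mathbf{X}\right]=\lambda\mathbf{M}$ follows immediately by inverting $\operatorname{vec}$.
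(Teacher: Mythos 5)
Your proposal matches the paper's proof exactly: both invoke Theorem \ref{vecaslapg} to reduce to $\operatorname{vec}\left(\mathbf{X}\right)\sim\mathcal{GAL}_{kn}\left(\operatorname{vec}\mathbf{M},\mathbf{\Psi}\otimes\mathbf{\Sigma},\lambda\right)$, quote $\mathbb{E}\left[\operatorname{vec}\mathbf{X}\right]=\lambda\operatorname{vec}\mathbf{M}$ from \cite{kozub}, and un-vectorize. The characteristic-function alternative you mention is not used in the paper, but your main argument is the same and is correct.
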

\begin{proof}
By Theorem \ref{vecaslapg},
\begin{equation*}
\mathbf{X}\sim\mathcal{MGAL}_{k\times n}\left(\mathbf{M},\mathbf{\Sigma},\mathbf{\Psi},\lambda\right)\Leftrightarrow\operatorname{vec}\left(\mathbf{X}\right)\sim\mathcal{GAL}_{kn}\left(\operatorname{vec}\mathbf{M},\mathbf{\Psi}\otimes\mathbf{\Sigma},\lambda\right).
\end{equation*}
By the property of the multivariate generalized asymmetric Laplace distribution \cite{kozub},
\begin{equation*}
 \mathbb{E}\left[\operatorname{vec}\mathbf{X}\right]=\lambda\operatorname{vec}\mathbf{M}=\operatorname{vec}\left(\lambda\mathbf{M}\right).
\end{equation*}
Thus,
\begin{equation*}
 \mathbb{E}\left[\mathbf{X}\right]=\lambda\mathbf{M}.
\end{equation*}
\end{proof}
\begin{theorem}
\label{kxxaslapg}
If  $\mathbf{X}\sim\mathcal{MGAL}_{k\times n}\left(\mathbf{M},\mathbf{\Sigma},\mathbf{\Psi},\lambda\right)$, then the variance-covariance matrix of the random matrix $\mathbf{X}\in\mathbb{R}^{k\times n}$ is given by
\begin{equation*}
\mathbf{K}_{\mathbf{XX}}=\lambda\left(\mathbf{\Psi}\otimes\mathbf{\Sigma}+\left(\operatorname{vec}\mathbf{M}\right)\left(\operatorname{vec}\mathbf{M}\right)^{\mathrm{T}}\right).
\end{equation*}
\end{theorem}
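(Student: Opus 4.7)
The plan is to mirror exactly the proof strategy that worked for Theorem~\ref{kxxaslap} in the asymmetric case, reducing the matrix claim to a known fact about the multivariate generalized asymmetric Laplace distribution via vectorization. The covariance matrix of a random matrix $\mathbf{X}$ is, by definition in this paper, the covariance matrix of the column vector $\operatorname{vec}\mathbf{X}$, so the identity to be proved is literally a statement about $\operatorname{vec}\mathbf{X}$.

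First, I would invoke Theorem~\ref{vecaslapg} to get
\begin{equation*}
\mathbf{X}\sim\mathcal{MGAL}_{k\times n}\left(\mathbf{M},\mathbf{\Sigma},\mathbf{\Psi},\lambda\right)\Leftrightarrow\operatorname{vec}\left(\mathbf{X}\right)\sim\mathcal{GAL}_{kn}\left(\operatorname{vec}\mathbf{M},\mathbf{\Psi}\otimes\mathbf{\Sigma},\lambda\right).
\end{equation*}
Then I would cite the known covariance formula for the multivariate generalized asymmetric Laplace law (from \cite{kozub}), namely that if $\mathbf{Y}\sim\mathcal{GAL}_{d}(\boldsymbol{\mu},\boldsymbol{\Lambda},\lambda)$ then $\mathbf{K}_{\mathbf{YY}}=\lambda\bigl(\boldsymbol{\Lambda}+\boldsymbol{\mu}\boldsymbol{\mu}^{\mathrm{T}}\bigr)$. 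Applying this with $\boldsymbol{\mu}=\operatorname{vec}\mathbf{M}$ and $\boldsymbol{\Lambda}=\mathbf{\Psi}\otimes\mathbf{\Sigma}$ yields
\begin{equation*}
\mathbf{K}_{\operatorname{vec}\mathbf{X},\operatorname{vec}\mathbf{X}}=\lambda\left(\mathbf{\Psi}\otimes\mathbf{\Sigma}+\left(\operatorname{vec}\mathbf{M}\right)\left(\operatorname{vec}\mathbf{M}\right)^{\mathrm{T}}\right),
\end{equation*}
and since $\mathbf{K}_{\mathbf{XX}}$ is defined as $\mathbf{K}_{\operatorname{vec}\mathbf{X},\operatorname{vec}\mathbf{X}}$, the claim follows at once.

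There is essentially no obstacle to speak of; the theorem is a direct corollary of Theorem~\ref{vecaslapg} together with the cited scalar--multivariate fact. The only thing worth sanity-checking is that the $\lambda$ prefactor multiplies both terms in the parenthesis (this is what distinguishes the GAL covariance from the AL covariance in Theorem~\ref{kxxaslap}, where $\lambda=1$ and the outer $\lambda$ disappears), and that the reference \cite{kozub} indeed states the covariance in the additive form $\lambda(\boldsymbol{\Lambda}+\boldsymbol{\mu}\boldsymbol{\mu}^{\mathrm{T}})$ rather than in some rescaled parametrisation; if the parametrisation in \cite{kozub} differs, one would have to match conventions, but this is a bookkeeping issue rather than a mathematical one. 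No independent computation (e.g.\ differentiating the characteristic function from Theorem~\ref{charmataslapg} twice) is needed, though that route is available as a backup verification.
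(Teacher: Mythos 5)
Your proposal is correct and follows exactly the same route as the paper's own proof: invoke Theorem~\ref{vecaslapg} to pass to $\operatorname{vec}\mathbf{X}\sim\mathcal{GAL}_{kn}\left(\operatorname{vec}\mathbf{M},\mathbf{\Psi}\otimes\mathbf{\Sigma},\lambda\right)$, apply the covariance formula from \cite{kozub}, and identify $\mathbf{K}_{\mathbf{XX}}$ with $\mathbf{K}_{\operatorname{vec}\mathbf{X},\operatorname{vec}\mathbf{X}}$. Your remark about checking the parametrisation in \cite{kozub} is a sensible sanity check, but nothing further is needed.
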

\begin{proof}
By Theorem \ref{vecaslapg},
\begin{equation*}
\mathbf{X}\sim\mathcal{MGAL}_{k\times n}\left(\mathbf{M},\mathbf{\Sigma},\mathbf{\Psi},\lambda\right)\Leftrightarrow\operatorname{vec}\left(\mathbf{X}\right)\sim\mathcal{GAL}_{kn}\left(\operatorname{vec}\mathbf{M},\mathbf{\Psi}\otimes\mathbf{\Sigma},\lambda\right).
\end{equation*}
By the property of the multivariate asymmetric generalized Laplace distribution \cite{kozub},
\begin{equation*}
\mathbf{K}_{\operatorname{vec}\mathbf{X},\operatorname{vec}\mathbf{X}}=\lambda\left(\mathbf{\Psi}\otimes\mathbf{\Sigma}+\left(\operatorname{vec}\mathbf{M}\right)\left(\operatorname{vec}\mathbf{M}\right)^{\mathrm{T}}\right).
\end{equation*}
Thus,
\begin{equation*}
\mathbf{K}_{\mathbf{XX}}=\lambda\left(\mathbf{\Psi}\otimes\mathbf{\Sigma}+\left(\operatorname{vec}\mathbf{M}\right)\left(\operatorname{vec}\mathbf{M}\right)^{\mathrm{T}}\right).
\end{equation*}
\end{proof}
\begin{theorem}
Let $\mathbf{D}$ is a rectangular $m\times k$ matrix, $\mathbf{C}$ is a rectangular $n\times p$ matrix, $m\leq k$ and $p\leq n$, then
\begin{equation*}
\mathbf{X}\sim\mathcal{MGAL}_{k\times n}\left(\mathbf{M},\mathbf{\Sigma},\mathbf{\Psi},\lambda\right)\Leftrightarrow\mathbf{DXC}\sim\mathcal{MGAL}_{m\times p}\left(\mathbf{DMC},\mathbf{D\Sigma}\mathbf{D}^{\mathrm{T}},\mathbf{C}^{\mathrm{T}}\mathbf{\Psi C},\lambda\right).
\end{equation*}
\end{theorem}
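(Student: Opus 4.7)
The plan is to mimic the characteristic-function argument used in the analogous theorem for $\mathcal{MAL}$ earlier in the excerpt, but powered by Theorem \ref{charmataslapg} for the generalized case. Concretely, I would start from the definition
\begin{equation*}
\varphi_{\mathbf{DXC}}\left(\mathbf{T}\right)=\mathbb{E}\left[e^{\mathrm{i}\operatorname{tr}\left(\mathbf{DXC}\mathbf{T}^{\mathrm{T}}\right)}\right]
\end{equation*}
and use cyclicity of the trace to rewrite the exponent as $\mathrm{i}\operatorname{tr}(\mathbf{X}(\mathbf{D}^{\mathrm{T}}\mathbf{T}\mathbf{C}^{\mathrm{T}})^{\mathrm{T}})$, i.e.\ in the form $\mathrm{i}\operatorname{tr}(\mathbf{X}\widetilde{\mathbf{T}}^{\mathrm{T}})$ with $\widetilde{\mathbf{T}}=\mathbf{D}^{\mathrm{T}}\mathbf{T}\mathbf{C}^{\mathrm{T}}\in\mathbb{R}^{k\times n}$. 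This identifies the expression as $\varphi_{\mathbf{X}}(\widetilde{\mathbf{T}})$.

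Next, since $\mathbf{X}\sim\mathcal{MGAL}_{k\times n}(\mathbf{M},\mathbf{\Sigma},\mathbf{\Psi},\lambda)$, Theorem \ref{charmataslapg} gives
\begin{equation*}
\varphi_{\mathbf{X}}(\widetilde{\mathbf{T}})=\left(\frac{1}{1+\tfrac{1}{2}\operatorname{tr}(\mathbf{\Psi}\widetilde{\mathbf{T}}^{\mathrm{T}}\mathbf{\Sigma}\widetilde{\mathbf{T}})-\mathrm{i}\operatorname{tr}(\mathbf{M}^{\mathrm{T}}\widetilde{\mathbf{T}})}\right)^{\lambda}.
\end{equation*}
I would then substitute $\widetilde{\mathbf{T}}=\mathbf{D}^{\mathrm{T}}\mathbf{T}\mathbf{C}^{\mathrm{T}}$ and simplify both trace expressions using cyclicity: the quadratic trace becomes $\operatorname{tr}(\mathbf{C}^{\mathrm{T}}\mathbf{\Psi C}\,\mathbf{T}^{\mathrm{T}}\mathbf{D\Sigma D}^{\mathrm{T}}\mathbf{T})$, and the linear trace becomes $\operatorname{tr}((\mathbf{DMC})^{\mathrm{T}}\mathbf{T})$. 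Recognizing these as the quadratic and linear forms built from $\mathbf{D\Sigma D}^{\mathrm{T}}$, $\mathbf{C}^{\mathrm{T}}\mathbf{\Psi C}$, and $\mathbf{DMC}$, Theorem \ref{charmataslapg} (applied in the other direction) identifies the result as the characteristic function of $\mathcal{MGAL}_{m\times p}(\mathbf{DMC},\mathbf{D\Sigma D}^{\mathrm{T}},\mathbf{C}^{\mathrm{T}}\mathbf{\Psi C},\lambda)$.

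The rank conditions $m\leq k$ and $p\leq n$ are needed only to guarantee that $\mathbf{D\Sigma D}^{\mathrm{T}}$ and $\mathbf{C}^{\mathrm{T}}\mathbf{\Psi C}$ are positive definite (so that the target distribution is well-defined and Theorem \ref{charmataslapg} applies); assuming $\mathbf{D}$ and $\mathbf{C}$ have full row rank and full column rank respectively handles this. Since the characteristic function determines the distribution uniquely, the equivalence follows in both directions.

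The main obstacle is purely bookkeeping: keeping track of transposes and getting the quadratic trace into the canonical form $\operatorname{tr}(\widehat{\mathbf{\Psi}}\mathbf{T}^{\mathrm{T}}\widehat{\mathbf{\Sigma}}\mathbf{T})$ with $\widehat{\mathbf{\Sigma}}=\mathbf{D\Sigma D}^{\mathrm{T}}$ and $\widehat{\mathbf{\Psi}}=\mathbf{C}^{\mathrm{T}}\mathbf{\Psi C}$. This is a straightforward application of $\operatorname{tr}(ABC)=\operatorname{tr}(BCA)$, identical to the manipulation performed in the proof of the analogous $\mathcal{MAL}$ theorem, with the only change being that the Laplace denominator is raised to the power $\lambda$ throughout.
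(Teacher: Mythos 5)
Your proposal is correct and follows essentially the same route as the paper: compute $\varphi_{\mathbf{DXC}}$, use cyclicity of the trace to recognize it as $\varphi_{\mathbf{X}}$ evaluated at $\mathbf{D}^{\mathrm{T}}\mathbf{T}\mathbf{C}^{\mathrm{T}}$, apply Theorem \ref{charmataslapg}, and rearrange the traces into the canonical form with parameters $\mathbf{DMC}$, $\mathbf{D\Sigma D}^{\mathrm{T}}$, $\mathbf{C}^{\mathrm{T}}\mathbf{\Psi C}$. Your added remark that full row rank of $\mathbf{D}$ and full column rank of $\mathbf{C}$ are what actually guarantee positive definiteness of the new scale matrices is a point the paper leaves implicit, and is worth keeping.
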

\begin{proof}
The characteristic function of $\mathbf{DXC}$ is
\begin{equation*}
\varphi_{\mathbf{DXC}}\left(\mathbf{T}\right)=\mathbb{E}\left[e^{\mathrm{i}\operatorname{tr}\left(\mathbf{DXC}\mathbf{T}^{\mathrm{T}}\right)}\right]=\mathbb{E}\left[e^{\mathrm{i}\operatorname{tr}\left(\mathbf{X}\mathbf{C}\mathbf{T}^{\mathrm{T}}\mathbf{D}\right)}\right].
\end{equation*}
By Theorem \ref{charmataslapg},
\begin{equation*}
\begin{split}
\varphi_{\mathbf{DXC}}\left(\mathbf{T}\right)&=\left(\frac{1}{1+\frac{1}{2}\operatorname{tr}\left(\mathbf{\Psi}\mathbf{C}\mathbf{T}^{\mathrm{T}}\mathbf{D}\mathbf{\Sigma}\mathbf{D}^{\mathrm{T}}\mathbf{T}\mathbf{C}^{\mathrm{T}}\right)-\mathrm{i}\operatorname{tr}\left(\mathbf{C}\mathbf{T}^{\mathrm{T}}\mathbf{D}\mathbf{M}\right)}\right)^{\lambda}\\&=\left(\frac{1}{1+\frac{1}{2}\operatorname{tr}\left(\mathbf{C}^{\mathrm{T}}\mathbf{\Psi}\mathbf{C}\mathbf{T}^{\mathrm{T}}\mathbf{D}\mathbf{\Sigma}\mathbf{D}^{\mathrm{T}}\mathbf{T}\right)-\mathrm{i}\operatorname{tr}\left(\mathbf{T}^{\mathrm{T}}\mathbf{D}\mathbf{M}\mathbf{C}\right)}\right)^{\lambda}.
\end{split}
\end{equation*}
Thus,
\begin{equation*}
\mathbf{X}\sim\mathcal{MGAL}_{k\times n}\left(\mathbf{M},\mathbf{\Sigma},\mathbf{\Psi},\lambda\right)\Leftrightarrow\mathbf{DXC}\sim\mathcal{MGAL}_{m\times p}\left(\mathbf{DMC},\mathbf{D\Sigma}\mathbf{D}^{\mathrm{T}},\mathbf{C}^{\mathrm{T}}\mathbf{\Psi C},\lambda\right).
\end{equation*}
\end{proof}
\begin{cor}
Let $\mathbf{D}$ is a rectangular $m\times k$ matrix and $m\leq k$, then
\begin{equation*}
\mathbf{X}\sim\mathcal{MGAL}_{k\times n}\left(\mathbf{M},\mathbf{\Sigma},\mathbf{\Psi},\lambda\right)\Leftrightarrow\mathbf{DX}\sim\mathcal{MGAL}_{m\times n}\left(\mathbf{DM},\mathbf{D\Sigma}\mathbf{D}^{\mathrm{T}},\mathbf{\Psi},\lambda\right).
\end{equation*}
\end{cor}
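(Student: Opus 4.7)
The plan is to obtain this corollary as an immediate specialization of the preceding theorem on two-sided transformations $\mathbf{X}\mapsto\mathbf{DXC}$. Specifically, I would choose $\mathbf{C}=\mathbf{I}_n$, the $n\times n$ identity matrix, and apply that theorem with this choice.

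First I would verify that $\mathbf{I}_n$ satisfies the hypothesis of the preceding theorem as a rectangular $n\times p$ matrix with $p\leq n$: here $p=n$, so the condition holds with equality. The hypothesis on $\mathbf{D}$ is assumed in the statement of the corollary itself, so no extra work is needed there. Next I would substitute $\mathbf{C}=\mathbf{I}_n$ into the conclusion and simplify the parameters using the identities $\mathbf{DX}\mathbf{I}_n=\mathbf{DX}$, $\mathbf{DM}\mathbf{I}_n=\mathbf{DM}$, and $\mathbf{I}_n^{\mathrm{T}}\mathbf{\Psi}\mathbf{I}_n=\mathbf{\Psi}$, while $\mathbf{D}\mathbf{\Sigma}\mathbf{D}^{\mathrm{T}}$ and $\lambda$ are untouched. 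The theorem then yields
\begin{equation*}
\mathbf{X}\sim\mathcal{MGAL}_{k\times n}\left(\mathbf{M},\mathbf{\Sigma},\mathbf{\Psi},\lambda\right)\Leftrightarrow\mathbf{DX}\sim\mathcal{MGAL}_{m\times n}\left(\mathbf{DM},\mathbf{D\Sigma}\mathbf{D}^{\mathrm{T}},\mathbf{\Psi},\lambda\right),
\end{equation*}
which is exactly the claim.

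There is essentially no obstacle in this proof: everything reduces to recognizing that $\mathbf{I}_n$ is a legitimate choice for $\mathbf{C}$ and performing trivial matrix simplifications. If one preferred a self-contained argument, one could instead replicate the proof strategy of the preceding theorem by computing the characteristic function of $\mathbf{DX}$ directly via Theorem \ref{charmataslapg}, using $\operatorname{tr}(\mathbf{DX}\mathbf{T}^{\mathrm{T}})=\operatorname{tr}(\mathbf{X}\mathbf{T}^{\mathrm{T}}\mathbf{D})$ and matching the result against the characteristic function of $\mathcal{MGAL}_{m\times n}(\mathbf{DM},\mathbf{D\Sigma}\mathbf{D}^{\mathrm{T}},\mathbf{\Psi},\lambda)$; however, the specialization route is shorter and avoids duplication with the preceding proof.
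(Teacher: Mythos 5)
Your specialization $\mathbf{C}=\mathbf{I}_n$ in the preceding theorem is exactly the intended argument; the paper states this corollary without any written proof precisely because it follows immediately that way. Your proof is correct and matches the paper's (implicit) approach.
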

\begin{cor}
Let $\mathbf{C}$ is a rectangular $n\times p$ matrix and $p\leq n$, then
\begin{equation*}
\mathbf{X}\sim\mathcal{MGAL}_{k\times n}\left(\mathbf{M},\mathbf{\Sigma},\mathbf{\Psi},\lambda\right)\Leftrightarrow\mathbf{XC}\sim\mathcal{MGAL}_{k\times p}\left(\mathbf{MC},\mathbf{\Sigma},\mathbf{C}^{\mathrm{T}}\mathbf{\Psi C},\lambda\right).
\end{equation*}
\end{cor}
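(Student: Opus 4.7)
The plan is to obtain this corollary directly from the immediately preceding theorem by specializing $\mathbf{D}$ to the identity. Specifically, I would take $\mathbf{D} = \mathbf{I}_k$, viewed as a rectangular $k \times k$ matrix; the hypothesis $m \le k$ becomes $k \le k$ and is satisfied trivially, and the hypothesis $p \le n$ is the one already assumed in the corollary.

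With this choice the ingredients of the conclusion of the preceding theorem simplify as follows: $\mathbf{D}\mathbf{X}\mathbf{C}$ collapses to $\mathbf{X}\mathbf{C}$, $\mathbf{D}\mathbf{M}\mathbf{C}$ to $\mathbf{M}\mathbf{C}$, and $\mathbf{D}\mathbf{\Sigma}\mathbf{D}^{\mathrm{T}}$ to $\mathbf{I}_k\mathbf{\Sigma}\mathbf{I}_k^{\mathrm{T}} = \mathbf{\Sigma}$, while $\mathbf{C}^{\mathrm{T}}\mathbf{\Psi}\mathbf{C}$ and the shape parameter $\lambda$ pass through unchanged. Substituting these into the conclusion of the theorem yields
\begin{equation*}
\mathbf{X}\sim\mathcal{MGAL}_{k\times n}\left(\mathbf{M},\mathbf{\Sigma},\mathbf{\Psi},\lambda\right)\Leftrightarrow\mathbf{XC}\sim\mathcal{MGAL}_{k\times p}\left(\mathbf{MC},\mathbf{\Sigma},\mathbf{C}^{\mathrm{T}}\mathbf{\Psi C},\lambda\right),
\end{equation*}
which is exactly the statement to be proved.

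There is no genuine obstacle here: the entire content of the corollary is contained in the preceding theorem, and the argument reduces to a bookkeeping substitution plus verification that the dimensional hypotheses remain intact. As an alternative route, one could reprove the claim from scratch by forming $\varphi_{\mathbf{XC}}(\mathbf{T}) = \mathbb{E}[e^{\mathrm{i}\operatorname{tr}(\mathbf{X}\mathbf{C}\mathbf{T}^{\mathrm{T}})}]$, applying the cyclic property of the trace, and invoking Theorem \ref{charmataslapg} as in the proof of the theorem above; but passing through the $\mathbf{D} = \mathbf{I}_k$ specialization is the shortest path and is the pattern followed by the analogous corollary in the previous subsection.
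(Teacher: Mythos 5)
Your proof is correct and matches the paper's intent: the paper states this corollary without proof, treating it as the immediate specialization $\mathbf{D}=\mathbf{I}_k$ of the preceding theorem, which is exactly the substitution you carry out.
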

\section{Tensor variate Laplace distributions}
\subsection{Introduction}
The tensor variate asymmetric Laplace distribution is a continuous probability distribution that is a generalization of the matrix variate asymmetric Laplace distribution to tensor-valued random variables. The tensor variate asymmetric Laplace distribution of a random order-D tensor $\boldsymbol{\mathcal{X}}$, with  dimensional lengths $n_1\times n_2 \times \cdots \times n_D=\mathbf{n}$ can be written in the following notation:
\begin{equation*}
\mathcal{TAL}_{\mathbf{n}}\left(\boldsymbol{\mathfrak{M}},\mathbf{\Sigma}_1,\mathbf{\Sigma}_2,\ldots,\mathbf{\Sigma}_D\right)
\end{equation*}
or
\begin{equation*}
\mathcal{TAL}\left(\boldsymbol{\mathfrak{M}},\mathbf{\Sigma}_1,\mathbf{\Sigma}_2,\ldots,\mathbf{\Sigma}_D\right),
\end{equation*}
where $\boldsymbol{\mathfrak{M}}$ is a location tensor, $\mathbf{\Sigma}_i$ is a positive-definite scale matrix.

The tensor variate generalized asymmetric Laplace distribution is a continuous probability distribution that is a generalization of the matrix variate generalized asymmetric Laplace distribution to tensor-valued random variables. The tensor variate generalized asymmetric Laplace distribution of a random order-D tensor $\boldsymbol{\mathcal{X}}$, with  dimensional lengths $n_1\times n_2 \times \cdots \times n_D=\mathbf{n}$ can be written in the following notation:
\begin{equation*}
\mathcal{TGAL}_{\mathbf{n}}\left(\boldsymbol{\mathfrak{M}},\mathbf{\Sigma}_1,\mathbf{\Sigma}_2,\ldots,\mathbf{\Sigma}_D,\lambda\right)
\end{equation*}
or
\begin{equation*}
\mathcal{TGAL}\left(\boldsymbol{\mathfrak{M}},\mathbf{\Sigma}_1,\mathbf{\Sigma}_2,\ldots,\mathbf{\Sigma}_D,\lambda\right),
\end{equation*}
where $\boldsymbol{\mathfrak{M}}$ is a location tensor, $\mathbf{\Sigma}_i$ is a positive-definite scale matrix and $\lambda>0$.
\subsection{Tensor variate asymmetric Laplace distribution}
\begin{defi}
\label{pmfmataslap}
Let  $\boldsymbol{\mathcal{X}}$ is a random order-D tensor, with dimensional lengths $n_1\times n_2 \times \cdots \times n_D$, with realization $\boldsymbol{\mathfrak{X}}$. $\boldsymbol{\mathcal{X}}\sim\mathcal{TAL}\left(\boldsymbol{\mathfrak{M}},\mathbf{\Sigma}_1,\mathbf{\Sigma}_2,\ldots,\mathbf{\Sigma}_D\right)$ if and only if the probability density function is given by
\begin{equation*}
\begin{split}
f_{\boldsymbol{\mathcal{X}}}\left(\boldsymbol{\mathfrak{X}}\right)&=\frac{2\operatorname{exp}\left(\left(\operatorname{vec}\boldsymbol{\mathfrak{X}}\right)^{\mathrm{T}}\left(\bigotimes_{i=1}^{D}\mathbf{\Sigma}_i\right)^{-1}\operatorname{vec}\boldsymbol{\mathfrak{M}}\right)}{\left(2\pi\right)^{n^{*}/2}\prod_{i=1}^{D}\left(\det\mathbf{\Sigma}_{i}\right)^{n^{*}/(2n_i)}}\left(\frac{\left(\operatorname{vec}\boldsymbol{\mathfrak{X}}\right)^{\mathrm{T}}\left(\bigotimes_{i=1}^{D}\mathbf{\Sigma}_i\right)^{-1}\operatorname{vec}\boldsymbol{\mathfrak{X}}}{2+\left(\operatorname{vec}\boldsymbol{\mathfrak{M}}\right)^{\mathrm{T}}\left(\bigotimes_{i=1}^{D}\mathbf{\Sigma}_i\right)^{-1}\operatorname{vec}\boldsymbol{\mathfrak{M}}}\right)^{1/2-n^{*}/4}\\&\times K_{1-n^{*}/2}\left(\sqrt{\left(2+\left(\operatorname{vec}\boldsymbol{\mathfrak{M}}\right)^{\mathrm{T}}\left(\bigotimes_{i=1}^{D}\mathbf{\Sigma}_i\right)^{-1}\operatorname{vec}\boldsymbol{\mathfrak{M}}\right)\left(\left(\operatorname{vec}\boldsymbol{\mathfrak{X}}\right)^{\mathrm{T}}\left(\bigotimes_{i=1}^{D}\mathbf{\Sigma}_i\right)^{-1}\operatorname{vec}\boldsymbol{\mathfrak{X}}\right)}\right),
\end{split}
\end{equation*}
where $n^{*}=\prod_{i=1}^{D}n_i$ and $K_{1-n^{*}/2}(\cdot)$ is the modified Bessel function of the third kind.
\end{defi}
\begin{theorem}
\label{vecaslap}
Let  $\boldsymbol{\mathcal{X}}$ is a random order-D tensor, with dimensional lengths $n_1\times n_2 \times \cdots \times n_D$, with realization $\boldsymbol{\mathfrak{X}}$. $\boldsymbol{\mathcal{X}}\sim\mathcal{TAL}_{\mathbf{n}}\left(\boldsymbol{\mathfrak{M}},\mathbf{\Sigma}_1,\mathbf{\Sigma}_2,\ldots,\mathbf{\Sigma}_D\right)\Leftrightarrow\operatorname{vec}\boldsymbol{\mathcal{X}}\sim\mathcal{AL}_{n^{*}}\left(\operatorname{vec}\boldsymbol{\mathfrak{M}},\bigotimes_{i=1}^{D}\mathbf{\Sigma}_i\right).$
\end{theorem}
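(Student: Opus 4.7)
The plan is to mirror the argument used in the matrix case (Theorem \ref{vecaslap} of Section 1.2) but with the two-factor Kronecker product $\mathbf{\Psi}\otimes\mathbf{\Sigma}$ replaced throughout by the $D$-fold product $\bigotimes_{i=1}^{D}\mathbf{\Sigma}_i$. Concretely, I would first write out the tensor density as given in Definition \ref{pmfmataslap}, and then, invoking the definition of the multivariate asymmetric Laplace distribution from \cite{kotzas} for the $n^{*}$-dimensional vector $\operatorname{vec}\boldsymbol{\mathcal{X}}$ with location $\operatorname{vec}\boldsymbol{\mathfrak{M}}$ and scale $\bigotimes_{i=1}^{D}\mathbf{\Sigma}_i$, write out the density of $\operatorname{vec}\boldsymbol{\mathcal{X}}\sim\mathcal{AL}_{n^{*}}\bigl(\operatorname{vec}\boldsymbol{\mathfrak{M}},\bigotimes_{i=1}^{D}\mathbf{\Sigma}_i\bigr)$. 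Because Definition \ref{pmfmataslap} is already phrased entirely in terms of vectorizations and the Kronecker product $\bigotimes_{i=1}^{D}\mathbf{\Sigma}_i$, the exponential factor, the quadratic-form factor, the order of the Bessel function, and its argument all match term-for-term by direct inspection.

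The single remaining piece is the normalizing constant, which requires verifying the determinant identity
\[
\prod_{i=1}^{D}(\det\mathbf{\Sigma}_i)^{n^{*}/(2n_i)} = \left(\det\bigotimes_{i=1}^{D}\mathbf{\Sigma}_i\right)^{1/2}.
\]
I expect this to be the main (and essentially only) obstacle; everything else is bookkeeping. I would prove it by induction on $D$. The base case $D=2$ is precisely the standard Kronecker determinant formula $\det(A\otimes B)=(\det A)^{n_2}(\det B)^{n_1}$ cited as \cite[p.~2-12]{gupta}, which is what makes Theorem \ref{vecaslap} work in the matrix setting. For the inductive step I would group
\[
\bigotimes_{i=1}^{D}\mathbf{\Sigma}_i = \left(\bigotimes_{i=1}^{D-1}\mathbf{\Sigma}_i\right)\otimes\mathbf{\Sigma}_D,
\]
observe that the left-hand factor has size $\prod_{i=1}^{D-1}n_i = n^{*}/n_D$, and apply the base formula: the determinant of the left factor is raised to the power $n_D$ and $\det\mathbf{\Sigma}_D$ is raised to the power $n^{*}/n_D$. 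Combining this with the inductive hypothesis $\det\bigotimes_{i=1}^{D-1}\mathbf{\Sigma}_i = \prod_{i=1}^{D-1}(\det\mathbf{\Sigma}_i)^{(n^{*}/n_D)/n_i}$ and taking a square root produces the required product form.

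With the determinant identity established, every factor in the tensor density coincides with the corresponding factor in the density of $\operatorname{vec}\boldsymbol{\mathcal{X}}\sim\mathcal{AL}_{n^{*}}\bigl(\operatorname{vec}\boldsymbol{\mathfrak{M}},\bigotimes_{i=1}^{D}\mathbf{\Sigma}_i\bigr)$, so the two density functions are equal as functions of the realization, which yields the claimed equivalence in both directions. Note that, in contrast to the matrix proof, no trace-to-quadratic-form manipulations are needed here, because Definition \ref{pmfmataslap} was already stated in vectorized form; this is really what makes the tensor statement essentially a direct corollary of the $D=2$ case once the multi-factor determinant identity has been recorded.
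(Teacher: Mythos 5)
Your proposal is correct and follows essentially the same route as the paper: both proofs simply write out the density from the tensor definition and the density of $\operatorname{vec}\boldsymbol{\mathcal{X}}\sim\mathcal{AL}_{n^{*}}\left(\operatorname{vec}\boldsymbol{\mathfrak{M}},\bigotimes_{i=1}^{D}\mathbf{\Sigma}_i\right)$ and observe that they coincide term by term, since the tensor density is already stated in vectorized form. Your explicit inductive verification of the normalizing-constant identity $\prod_{i=1}^{D}\left(\det\mathbf{\Sigma}_i\right)^{n^{*}/(2n_i)}=\left(\det\bigotimes_{i=1}^{D}\mathbf{\Sigma}_i\right)^{1/2}$ is correct and is in fact a step the paper uses implicitly but does not write down, so your version is slightly more complete.
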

\begin{proof}
\begin{equation*}
\begin{split}
f_{\boldsymbol{\mathcal{X}}}\left(\boldsymbol{\mathfrak{X}}\right)&=\frac{2\operatorname{exp}\left(\left(\operatorname{vec}\boldsymbol{\mathfrak{X}}\right)^{\mathrm{T}}\left(\bigotimes_{i=1}^{D}\mathbf{\Sigma}_i\right)^{-1}\operatorname{vec}\boldsymbol{\mathfrak{M}}\right)}{\left(2\pi\right)^{n^{*}/2}\prod_{i=1}^{D}\left(\det\mathbf{\Sigma}_{i}\right)^{n^{*}/(2n_i)}}\left(\frac{\left(\operatorname{vec}\boldsymbol{\mathfrak{X}}\right)^{\mathrm{T}}\left(\bigotimes_{i=1}^{D}\mathbf{\Sigma}_i\right)^{-1}\operatorname{vec}\boldsymbol{\mathfrak{X}}}{2+\left(\operatorname{vec}\boldsymbol{\mathfrak{M}}\right)^{\mathrm{T}}\left(\bigotimes_{i=1}^{D}\mathbf{\Sigma}_i\right)^{-1}\operatorname{vec}\boldsymbol{\mathfrak{M}}}\right)^{1/2-n^{*}/4}\\&\times K_{1-n^{*}/2}\left(\sqrt{\left(2+\left(\operatorname{vec}\boldsymbol{\mathfrak{M}}\right)^{\mathrm{T}}\left(\bigotimes_{i=1}^{D}\mathbf{\Sigma}_i\right)^{-1}\operatorname{vec}\boldsymbol{\mathfrak{M}}\right)\left(\left(\operatorname{vec}\boldsymbol{\mathfrak{X}}\right)^{\mathrm{T}}\left(\bigotimes_{i=1}^{D}\mathbf{\Sigma}_i\right)^{-1}\operatorname{vec}\boldsymbol{\mathfrak{X}}\right)}\right),
\end{split}
\end{equation*}
where $n^{*}=\prod_{i=1}^{D}n_i$ and $K_{1-n^{*}/2}(\cdot)$ is the modified Bessel function of the third kind.

By the definition of the multivariate asymmetric Laplace distribution \cite{kotzas}, $\operatorname{vec}\boldsymbol{\mathcal{X}}\sim\mathcal{AL}_{n^{*}}\left(\operatorname{vec}\boldsymbol{\mathfrak{M}},\bigotimes_{i=1}^{D}\mathbf{\Sigma}_i\right)$ if and only if
\begin{equation*}
\begin{split}
f_{\operatorname{vec}\boldsymbol{\mathcal{X}}}\left(\operatorname{vec}\boldsymbol{\mathfrak{X}}\right)&=\frac{2\operatorname{exp}\left(\left(\operatorname{vec}\boldsymbol{\mathfrak{X}}\right)^{\mathrm{T}}\left(\bigotimes_{i=1}^{D}\mathbf{\Sigma}_i\right)^{-1}\operatorname{vec}\boldsymbol{\mathfrak{M}}\right)}{\left(2\pi\right)^{n^{*}/2}\prod_{i=1}^{D}\left(\det\mathbf{\Sigma}_{i}\right)^{n^{*}/(2n_i)}}\left(\frac{\left(\operatorname{vec}\boldsymbol{\mathfrak{X}}\right)^{\mathrm{T}}\left(\bigotimes_{i=1}^{D}\mathbf{\Sigma}_i\right)^{-1}\operatorname{vec}\boldsymbol{\mathfrak{X}}}{2+\left(\operatorname{vec}\boldsymbol{\mathfrak{M}}\right)^{\mathrm{T}}\left(\bigotimes_{i=1}^{D}\mathbf{\Sigma}_i\right)^{-1}\operatorname{vec}\boldsymbol{\mathfrak{M}}}\right)^{1/2-n^{*}/4}\\&\times K_{1-n^{*}/2}\left(\sqrt{\left(2+\left(\operatorname{vec}\boldsymbol{\mathfrak{M}}\right)^{\mathrm{T}}\left(\bigotimes_{i=1}^{D}\mathbf{\Sigma}_i\right)^{-1}\operatorname{vec}\boldsymbol{\mathfrak{M}}\right)\left(\left(\operatorname{vec}\boldsymbol{\mathfrak{X}}\right)^{\mathrm{T}}\left(\bigotimes_{i=1}^{D}\mathbf{\Sigma}_i\right)^{-1}\operatorname{vec}\boldsymbol{\mathfrak{X}}\right)}\right),
\end{split}
\end{equation*}
where $n^{*}=\prod_{i=1}^{D}n_i$ and $K_{1-n^{*}/2}(\cdot)$ is the modified Bessel function of the third kind.

Thus, $\boldsymbol{\mathcal{X}}\sim\mathcal{TAL}_{\mathbf{n}}\left(\boldsymbol{\mathfrak{M}},\mathbf{\Sigma}_1,\mathbf{\Sigma}_2,\ldots,\mathbf{\Sigma}_D\right)\Leftrightarrow\operatorname{vec}\boldsymbol{\mathcal{X}}\sim\mathcal{AL}_{n^{*}}\left(\operatorname{vec}\boldsymbol{\mathfrak{M}},\bigotimes_{i=1}^{D}\mathbf{\Sigma}_i\right).$
\end{proof}
\begin{theorem}
\label{charmataslap}
Let $\boldsymbol{\mathfrak{T}}$ is an $\mathbf{n}$ dimensional order-D tensor. $\boldsymbol{\mathcal{X}}\sim\mathcal{TAL}_{\mathbf{n}}\left(\boldsymbol{\mathfrak{M}},\mathbf{\Sigma}_1,\mathbf{\Sigma}_2,\ldots,\mathbf{\Sigma}_D\right)$ if and only if the characteristic function of the random tensor is given by
\begin{equation*}
\varphi_{\boldsymbol{\mathcal{X}}}\left(\boldsymbol{\mathfrak{T}}\right)=\frac{1}{1+\frac{1}{2}\left(\operatorname{vec}\boldsymbol{\mathfrak{T}}\right)^{\mathrm{T}}\left(\bigotimes_{i=1}^{D}\mathbf{\Sigma}_i\right)\operatorname{vec}\boldsymbol{\mathfrak{T}}-\mathrm{i}\left(\operatorname{vec}\boldsymbol{\mathfrak{M}}\right)^{\mathrm{T}}\operatorname{vec}\boldsymbol{\mathfrak{T}}}.
\end{equation*}
\end{theorem}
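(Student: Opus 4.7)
The plan is to mirror the strategy used for Theorem \ref{charmataslap} in the matrix case: reduce the tensor statement to a known multivariate statement by vectorizing, then read off the answer from the cited characteristic function of $\mathcal{AL}_{n^{*}}$.

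First I would invoke Theorem \ref{vecaslap} to replace the tensor distributional statement with the equivalent one for the vectorization, namely $\operatorname{vec}\boldsymbol{\mathcal{X}}\sim\mathcal{AL}_{n^{*}}\bigl(\operatorname{vec}\boldsymbol{\mathfrak{M}},\bigotimes_{i=1}^{D}\mathbf{\Sigma}_{i}\bigr)$. The characteristic function of the multivariate asymmetric Laplace distribution, as recorded in \cite{kotzas}, then gives
\begin{equation*}
\varphi_{\operatorname{vec}\boldsymbol{\mathcal{X}}}\bigl(\operatorname{vec}\boldsymbol{\mathfrak{T}}\bigr)=\frac{1}{1+\tfrac{1}{2}\bigl(\operatorname{vec}\boldsymbol{\mathfrak{T}}\bigr)^{\mathrm{T}}\bigl(\bigotimes_{i=1}^{D}\mathbf{\Sigma}_i\bigr)\operatorname{vec}\boldsymbol{\mathfrak{T}}-\mathrm{i}\bigl(\operatorname{vec}\boldsymbol{\mathfrak{M}}\bigr)^{\mathrm{T}}\operatorname{vec}\boldsymbol{\mathfrak{T}}}.
\end{equation*}

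The second step is to argue that the characteristic function of the tensor $\boldsymbol{\mathcal{X}}$ evaluated at $\boldsymbol{\mathfrak{T}}$ coincides with the characteristic function of the vector $\operatorname{vec}\boldsymbol{\mathcal{X}}$ evaluated at $\operatorname{vec}\boldsymbol{\mathfrak{T}}$. This is because the natural tensor inner product used in defining $\varphi_{\boldsymbol{\mathcal{X}}}$, namely $\langle\boldsymbol{\mathcal{X}},\boldsymbol{\mathfrak{T}}\rangle=\sum_{i_1,\ldots,i_D}\mathcal{X}_{i_1\cdots i_D}\mathfrak{T}_{i_1\cdots i_D}$, is exactly $(\operatorname{vec}\boldsymbol{\mathcal{X}})^{\mathrm{T}}\operatorname{vec}\boldsymbol{\mathfrak{T}}$, so that
\begin{equation*}
\varphi_{\boldsymbol{\mathcal{X}}}\bigl(\boldsymbol{\mathfrak{T}}\bigr)=\mathbb{E}\bigl[e^{\mathrm{i}\langle\boldsymbol{\mathcal{X}},\boldsymbol{\mathfrak{T}}\rangle}\bigr]=\mathbb{E}\bigl[e^{\mathrm{i}(\operatorname{vec}\boldsymbol{\mathcal{X}})^{\mathrm{T}}\operatorname{vec}\boldsymbol{\mathfrak{T}}}\bigr]=\varphi_{\operatorname{vec}\boldsymbol{\mathcal{X}}}\bigl(\operatorname{vec}\boldsymbol{\mathfrak{T}}\bigr).
\end{equation*}
Combining the two identities delivers the claimed expression, and by the uniqueness of characteristic functions the equivalence runs in both directions.

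The main obstacle, modest as it is, is the second step: the tensor-level characteristic function has not been explicitly defined in the excerpt, so one must fix a convention. I would adopt $\varphi_{\boldsymbol{\mathcal{X}}}(\boldsymbol{\mathfrak{T}}):=\mathbb{E}[\exp(\mathrm{i}\langle\boldsymbol{\mathcal{X}},\boldsymbol{\mathfrak{T}}\rangle)]$ with the entrywise inner product above, which is the direct tensor analogue of $\operatorname{tr}(\mathbf{M}^{\mathrm{T}}\mathbf{T})$ used in Theorem \ref{charmataslap} for the matrix case; once this is made explicit, the identification with $\varphi_{\operatorname{vec}\boldsymbol{\mathcal{X}}}$ is immediate and no Kronecker-identity manipulation of the form seen in the matrix proof is needed, since the density in Definition \ref{pmfmataslap} is already written in vectorized form.
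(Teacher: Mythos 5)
Your proposal follows exactly the paper's own route: apply Theorem \ref{vecaslap} to reduce to $\operatorname{vec}\boldsymbol{\mathcal{X}}\sim\mathcal{AL}_{n^{*}}\left(\operatorname{vec}\boldsymbol{\mathfrak{M}},\bigotimes_{i=1}^{D}\mathbf{\Sigma}_i\right)$, quote the characteristic function from \cite{kotzas}, and identify $\varphi_{\boldsymbol{\mathcal{X}}}\left(\boldsymbol{\mathfrak{T}}\right)$ with $\varphi_{\operatorname{vec}\boldsymbol{\mathcal{X}}}\left(\operatorname{vec}\boldsymbol{\mathfrak{T}}\right)$. Your second step is in fact slightly more careful than the paper, which asserts that identification without spelling out the inner-product convention; otherwise the two arguments coincide.
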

\begin{proof}
By Theorem \ref{vecaslap},
\begin{equation*}
\boldsymbol{\mathcal{X}}\sim\mathcal{TAL}_{\mathbf{n}}\left(\boldsymbol{\mathfrak{M}},\mathbf{\Sigma}_1,\mathbf{\Sigma}_2,\ldots,\mathbf{\Sigma}_D\right)\Leftrightarrow\operatorname{vec}\boldsymbol{\mathcal{X}}\sim\mathcal{AL}_{n^{*}}\left(\operatorname{vec}\boldsymbol{\mathfrak{M}},\bigotimes_{i=1}^{D}\mathbf{\Sigma}_i\right).
\end{equation*}
By the definition of the multivariate asymmetric Laplace distribution \cite{kotzas}, $\operatorname{vec}\boldsymbol{\mathcal{X}}\sim\mathcal{AL}_{n^{*}}\left(\operatorname{vec}\boldsymbol{\mathfrak{M}},\bigotimes_{i=1}^{D}\mathbf{\Sigma}_i\right)$ if and only if
\begin{equation*}
\varphi_{\operatorname{vec}\boldsymbol{\mathcal{X}}}\left(\operatorname{vec}\boldsymbol{\mathfrak{T}}\right)=\frac{1}{1+\frac{1}{2}\left(\operatorname{vec}\boldsymbol{\mathfrak{T}}\right)^{\mathrm{T}}\left(\bigotimes_{i=1}^{D}\mathbf{\Sigma}_i\right)\operatorname{vec}\boldsymbol{\mathfrak{T}}-\mathrm{i}\left(\operatorname{vec}\boldsymbol{\mathfrak{M}}\right)^{\mathrm{T}}\operatorname{vec}\boldsymbol{\mathfrak{T}}}.
\end{equation*}
Thus,
\begin{equation*}
\varphi_{\operatorname{vec}\boldsymbol{\mathcal{X}}}\left(\operatorname{vec}\boldsymbol{\mathfrak{T}}\right)=\varphi_{\boldsymbol{\mathcal{X}}}\left(\boldsymbol{\mathfrak{T}}\right)=\frac{1}{1+\frac{1}{2}\left(\operatorname{vec}\boldsymbol{\mathfrak{T}}\right)^{\mathrm{T}}\left(\bigotimes_{i=1}^{D}\mathbf{\Sigma}_i\right)\operatorname{vec}\boldsymbol{\mathfrak{T}}-\mathrm{i}\left(\operatorname{vec}\boldsymbol{\mathfrak{M}}\right)^{\mathrm{T}}\operatorname{vec}\boldsymbol{\mathfrak{T}}}.
\end{equation*}
\end{proof}
\begin{theorem}
Let $\boldsymbol{\mathcal{Y}}\sim\mathcal{TAL}_{\mathbf{n}}\left(\boldsymbol{\mathfrak{M}},\mathbf{\Sigma}_1,\mathbf{\Sigma}_2,\ldots,\mathbf{\Sigma}_D\right)$, $\boldsymbol{\mathcal{X}}\sim\mathcal{TN}_{\mathbf{n}}\left(\mathbb{O},\mathbf{\Sigma}_1,\mathbf{\Sigma}_2,\ldots,\mathbf{\Sigma}_D\right)$ and $W\sim\mathrm{Exp}\left(1\right)$, independent of $\boldsymbol{\mathcal{X}}$, then 
\begin{equation*}
\boldsymbol{\mathcal{Y}}=\boldsymbol{\mathfrak{M}}W+W^{1/2}\boldsymbol{\mathcal{X}}.
\end{equation*}
\end{theorem}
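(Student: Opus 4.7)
The plan is to mirror the matrix case almost verbatim, passing through the vectorization and reducing the claim to the already-known multivariate fact in \cite{kotzas}. First, I would form $\boldsymbol{\mathcal{Y}}=\boldsymbol{\mathfrak{M}}W+W^{1/2}\boldsymbol{\mathcal{X}}$ and apply the $\operatorname{vec}$ operator. Since $\operatorname{vec}$ is linear and $W$ is a scalar random variable, this gives
\begin{equation*}
\operatorname{vec}\boldsymbol{\mathcal{Y}}=\left(\operatorname{vec}\boldsymbol{\mathfrak{M}}\right)W+W^{1/2}\operatorname{vec}\boldsymbol{\mathcal{X}},
\end{equation*}
which is exactly the representation used to characterize the multivariate asymmetric Laplace distribution in the Gaussian mixture sense.

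Next, I would invoke the tensor normal distribution's defining vectorization property, namely
\begin{equation*}
\boldsymbol{\mathcal{X}}\sim\mathcal{TN}_{\mathbf{n}}\left(\mathbb{O},\mathbf{\Sigma}_1,\ldots,\mathbf{\Sigma}_D\right)\Leftrightarrow\operatorname{vec}\boldsymbol{\mathcal{X}}\sim\mathcal{N}_{n^{*}}\left(\mathbf{0},\bigotimes_{i=1}^{D}\mathbf{\Sigma}_i\right),
\end{equation*}
so that $\operatorname{vec}\boldsymbol{\mathcal{X}}$ is multivariate normal with mean zero and covariance $\bigotimes_{i=1}^{D}\mathbf{\Sigma}_i$. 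Combined with the independence of $W\sim\mathrm{Exp}(1)$ from $\boldsymbol{\mathcal{X}}$, the random vector $\operatorname{vec}\boldsymbol{\mathcal{Y}}$ fits the hypotheses of \cite[Theorem~6.3.1]{kotzas} with location $\operatorname{vec}\boldsymbol{\mathfrak{M}}$ and scale $\bigotimes_{i=1}^{D}\mathbf{\Sigma}_i$, yielding
\begin{equation*}
\operatorname{vec}\boldsymbol{\mathcal{Y}}\sim\mathcal{AL}_{n^{*}}\left(\operatorname{vec}\boldsymbol{\mathfrak{M}},\bigotimes_{i=1}^{D}\mathbf{\Sigma}_i\right).
\end{equation*}

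Finally, I would close the loop by appealing to Theorem \ref{vecaslap}, which states exactly that this multivariate law for $\operatorname{vec}\boldsymbol{\mathcal{Y}}$ is equivalent to $\boldsymbol{\mathcal{Y}}\sim\mathcal{TAL}_{\mathbf{n}}(\boldsymbol{\mathfrak{M}},\mathbf{\Sigma}_1,\ldots,\mathbf{\Sigma}_D)$, completing the argument. I do not anticipate any real obstacle: every step is a direct quotation of a result already on the page, and the only potential snag is making sure the $\operatorname{vec}$ conventions used for tensors match those used in the cited tensor-normal vectorization identity so that the Kronecker product $\bigotimes_{i=1}^{D}\mathbf{\Sigma}_i$ appears in the same order on both sides. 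Assuming the conventions in the paper are internally consistent (as Theorem \ref{vecaslap} implicitly presumes), this bookkeeping is immediate and the proof reduces to a two-line citation chain.
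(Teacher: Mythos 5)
Your proposal is correct and follows exactly the same route as the paper's own proof: vectorize the representation $\boldsymbol{\mathcal{Y}}=\boldsymbol{\mathfrak{M}}W+W^{1/2}\boldsymbol{\mathcal{X}}$, invoke the tensor-normal vectorization property to identify $\operatorname{vec}\boldsymbol{\mathcal{X}}$ as $\mathcal{N}_{n^{*}}\left(\mathbf{0},\bigotimes_{i=1}^{D}\mathbf{\Sigma}_i\right)$, apply \cite[Theorem 6.3.1]{kotzas}, and close with Theorem \ref{vecaslap}. No discrepancies to report.
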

\begin{proof}
Let $\boldsymbol{\mathcal{Y}}=\boldsymbol{\mathfrak{M}}W+W^{1/2}\boldsymbol{\mathcal{X}}$.

Therefore,
\begin{equation*}
\operatorname{vec}\boldsymbol{\mathcal{Y}}=\operatorname{vec}\left(\boldsymbol{\mathfrak{M}}W+W^{1/2}\boldsymbol{\mathcal{X}}\right)=\left(\operatorname{vec}\boldsymbol{\mathfrak{M}}\right)W+W^{1/2}\operatorname{vec}\boldsymbol{\mathcal{X}}.
\end{equation*}
By the definition of the tensor normal distribution \cite[Theorem 2.1]{galtensor},
\begin{equation*}
\boldsymbol{\mathcal{X}}\sim\mathcal{TN}_{\mathbf{n}}\left(\boldsymbol{\mathfrak{M}},\mathbf{\Sigma}_1,\mathbf{\Sigma}_2,\ldots,\mathbf{\Sigma}_D\right)\Leftrightarrow\operatorname{vec}\boldsymbol{\mathcal{X}}\sim\mathcal{N}_{n^{*}}\left(\operatorname{vec}\boldsymbol{\mathfrak{M}},\bigotimes_{i=1}^{D}\mathbf{\Sigma}_i\right).
\end{equation*}
By the theorem \cite[Theorem 6.3.1]{kotzas},
\begin{equation*}
\operatorname{vec}\boldsymbol{\mathcal{Y}}\sim\mathcal{AL}_{n^{*}}\left(\operatorname{vec}\boldsymbol{\mathfrak{M}},\bigotimes_{i=1}^{D}\mathbf{\Sigma}_i\right).
\end{equation*}
By Theorem \ref{vecaslap},
\begin{equation*}
\operatorname{vec}\boldsymbol{\mathcal{Y}}\sim\mathcal{AL}_{n^{*}}\left(\operatorname{vec}\boldsymbol{\mathfrak{M}},\bigotimes_{i=1}^{D}\mathbf{\Sigma}_i\right)\Leftrightarrow \boldsymbol{\mathcal{Y}}\sim\mathcal{TAL}_{\mathbf{n}}\left(\boldsymbol{\mathfrak{M}},\mathbf{\Sigma}_1,\mathbf{\Sigma}_2,\ldots,\mathbf{\Sigma}_D\right).
\end{equation*}
\end{proof}
\begin{theorem}
\label{aslapvarg}
Let  $\boldsymbol{\mathcal{X}}$ is a random order-D tensor, with dimensional lengths $n_1\times n_2 \times \cdots \times n_D$, with realization $\boldsymbol{\mathfrak{X}}$. $\boldsymbol{\mathcal{X}}\sim\mathcal{TAL}_{\mathbf{n}}\left(\boldsymbol{\mathfrak{M}},\mathbf{\Sigma}_1,\mathbf{\Sigma}_2,\ldots,\mathbf{\Sigma}_D\right)\Leftrightarrow\boldsymbol{\mathcal{X}}\sim\mathcal{TVVG}_{\mathbf{n}}\left(\mathbb{O},\boldsymbol{\mathfrak{M}},\mathbf{\Sigma}_1,\mathbf{\Sigma}_2,\ldots,\mathbf{\Sigma}_D,1\right).$
\end{theorem}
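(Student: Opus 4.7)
The plan is to reduce the tensor-level equivalence to the vector level, exactly as in each of the earlier theorems in this section.

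First, by Theorem~\ref{vecaslap}, the statement $\boldsymbol{\mathcal{X}}\sim\mathcal{TAL}_{\mathbf{n}}(\boldsymbol{\mathfrak{M}},\mathbf{\Sigma}_1,\ldots,\mathbf{\Sigma}_D)$ is equivalent to $\operatorname{vec}\boldsymbol{\mathcal{X}}\sim\mathcal{AL}_{n^{*}}\!\left(\operatorname{vec}\boldsymbol{\mathfrak{M}},\bigotimes_{i=1}^{D}\mathbf{\Sigma}_i\right)$. Second, I would invoke (or first establish, by the same vec/Kronecker bookkeeping used throughout the paper) the companion vec-characterization for the tensor variance-gamma family: $\boldsymbol{\mathcal{X}}\sim\mathcal{TVVG}_{\mathbf{n}}(\mathbb{O},\boldsymbol{\mathfrak{M}},\mathbf{\Sigma}_1,\ldots,\mathbf{\Sigma}_D,1)$ holds if and only if $\operatorname{vec}\boldsymbol{\mathcal{X}}\sim\mathcal{VVG}_{n^{*}}\!\left(0,\operatorname{vec}\boldsymbol{\mathfrak{M}},\bigotimes_{i=1}^{D}\mathbf{\Sigma}_i,1\right)$. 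With both reductions in hand, the entire tensor-level claim collapses to the known multivariate identity
\[
\mathcal{AL}_{n^{*}}(\mu,\Sigma)\;=\;\mathcal{VVG}_{n^{*}}(0,\mu,\Sigma,1),
\]
which is cited from \cite{kotzas}.

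If that multivariate identity has to be reproduced rather than quoted, the cleanest route is to compare characteristic functions: the $\mathcal{AL}$ characteristic function is $\left[1+\tfrac{1}{2}t^{\mathrm{T}}\Sigma t - \mathrm{i}\mu^{\mathrm{T}}t\right]^{-1}$, and the $\mathcal{VVG}$ characteristic function with drift $0$ and shape parameter $1$ reduces to exactly the same expression. A density-level check works equally well: the Bessel-function index $1-n^{*}/2$ and the exponent $1/2-n^{*}/4$ on the ratio in Definition~\ref{pmfmataslap} are precisely the ones obtained from the $\mathcal{VVG}$ density at $\lambda=1$, and $\Gamma(1)=1$ makes the two normalizing constants coincide.

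The real obstacle here is neither algebraic nor computational but definitional: the family $\mathcal{TVVG}_{\mathbf{n}}$ has not been introduced anywhere in the excerpt above, so a self-contained proof must either point to an external reference in which it is defined (together with its own vectorization theorem) or else first insert a short lemma characterizing $\mathcal{TVVG}_{\mathbf{n}}$ through $\operatorname{vec}$, modeled on Theorem~\ref{vecaslap}. Once that single piece of scaffolding is in place, the remainder is a three-step chain of if-and-only-ifs (tensor AL $\Leftrightarrow$ vector AL $\Leftrightarrow$ vector VVG with $\lambda=1$ $\Leftrightarrow$ tensor VVG with $\lambda=1$) and requires no further computation.
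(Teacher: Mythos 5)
Your proposal is correct in substance but routes the argument differently from the paper. The paper does not pass through the vectorized distributions at all for this theorem: it simply writes out the $\mathcal{TAL}$ density from Definition~\ref{pmfmataslap} and then quotes, directly from the cited reference \cite{galtensor}, the density of $\mathcal{TVVG}_{\mathbf{n}}\left(\mathbb{O},\boldsymbol{\mathfrak{M}},\mathbf{\Sigma}_1,\ldots,\mathbf{\Sigma}_D,1\right)$ at the tensor level, observing that the two expressions are identical (this is exactly the ``density-level check'' you mention as a fallback: the index $1-n^{*}/2$, the exponent $1/2-n^{*}/4$, and $\Gamma(1)=1$). Your primary route --- the three-step chain tensor $\mathcal{AL}$ $\Leftrightarrow$ vector $\mathcal{AL}$ $\Leftrightarrow$ vector variance-gamma with $\lambda=1$ $\Leftrightarrow$ tensor variance-gamma --- is logically sound but costs you an extra piece of scaffolding, namely a vec-characterization of $\mathcal{TVVG}$ analogous to Theorem~\ref{vecaslap}, which you correctly identify as the missing definitional ingredient. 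The paper sidesteps that need by citing the tensor-level density of $\mathcal{TVVG}$ from \cite{galtensor} outright; what your route buys in exchange is that it only ever invokes multivariate (vector-level) facts from the literature, which is more consistent with how every other proof in this section is structured. Either way the mathematical content is the same, and your diagnosis of where the real work lies (the definition of $\mathcal{TVVG}$, not the algebra) is accurate.
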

\begin{proof}
By Definition \ref{pmfmataslap}, $\boldsymbol{\mathcal{X}}\sim\mathcal{TAL}\left(\boldsymbol{\mathfrak{M}},\mathbf{\Sigma}_1,\mathbf{\Sigma}_2,\ldots,\mathbf{\Sigma}_D\right)$ if and only if the probability density function is given by
\begin{equation*}
\begin{split}
f_{\boldsymbol{\mathcal{X}}}\left(\boldsymbol{\mathfrak{X}}\right)&=\frac{2\operatorname{exp}\left(\left(\operatorname{vec}\boldsymbol{\mathfrak{X}}\right)^{\mathrm{T}}\left(\bigotimes_{i=1}^{D}\mathbf{\Sigma}_i\right)^{-1}\operatorname{vec}\boldsymbol{\mathfrak{M}}\right)}{\left(2\pi\right)^{n^{*}/2}\prod_{i=1}^{D}\left(\det\mathbf{\Sigma}_{i}\right)^{n^{*}/(2n_i)}}\left(\frac{\left(\operatorname{vec}\boldsymbol{\mathfrak{X}}\right)^{\mathrm{T}}\left(\bigotimes_{i=1}^{D}\mathbf{\Sigma}_i\right)^{-1}\operatorname{vec}\boldsymbol{\mathfrak{X}}}{2+\left(\operatorname{vec}\boldsymbol{\mathfrak{M}}\right)^{\mathrm{T}}\left(\bigotimes_{i=1}^{D}\mathbf{\Sigma}_i\right)^{-1}\operatorname{vec}\boldsymbol{\mathfrak{M}}}\right)^{1/2-n^{*}/4}\\&\times K_{1-n^{*}/2}\left(\sqrt{\left(2+\left(\operatorname{vec}\boldsymbol{\mathfrak{M}}\right)^{\mathrm{T}}\left(\bigotimes_{i=1}^{D}\mathbf{\Sigma}_i\right)^{-1}\operatorname{vec}\boldsymbol{\mathfrak{M}}\right)\left(\left(\operatorname{vec}\boldsymbol{\mathfrak{X}}\right)^{\mathrm{T}}\left(\bigotimes_{i=1}^{D}\mathbf{\Sigma}_i\right)^{-1}\operatorname{vec}\boldsymbol{\mathfrak{X}}\right)}\right),
\end{split}
\end{equation*}
where $n^{*}=\prod_{i=1}^{D}n_i$ and $K_{1-n^{*}/2}(\cdot)$ is the modified Bessel function of the third kind.

By the definition of the tensor variance gamma distribution \cite{galtensor}, $\boldsymbol{\mathcal{X}}\sim\mathcal{TVVG}_{\mathbf{n}}\left(\mathbb{O},\boldsymbol{\mathfrak{M}},\mathbf{\Sigma}_1,\mathbf{\Sigma}_2,\ldots,\mathbf{\Sigma}_D,1\right)$ if and only if 
\begin{equation*}
\begin{split}
f_{\boldsymbol{\mathcal{X}}}\left(\boldsymbol{\mathfrak{X}}\right)&=\frac{2\operatorname{exp}\left(\left(\operatorname{vec}\boldsymbol{\mathfrak{X}}\right)^{\mathrm{T}}\left(\bigotimes_{i=1}^{D}\mathbf{\Sigma}_i\right)^{-1}\operatorname{vec}\boldsymbol{\mathfrak{M}}\right)}{\left(2\pi\right)^{n^{*}/2}\prod_{i=1}^{D}\left(\det\mathbf{\Sigma}_{i}\right)^{n^{*}/(2n_i)}}\left(\frac{\left(\operatorname{vec}\boldsymbol{\mathfrak{X}}\right)^{\mathrm{T}}\left(\bigotimes_{i=1}^{D}\mathbf{\Sigma}_i\right)^{-1}\operatorname{vec}\boldsymbol{\mathfrak{X}}}{2+\left(\operatorname{vec}\boldsymbol{\mathfrak{M}}\right)^{\mathrm{T}}\left(\bigotimes_{i=1}^{D}\mathbf{\Sigma}_i\right)^{-1}\operatorname{vec}\boldsymbol{\mathfrak{M}}}\right)^{1/2-n^{*}/4}\\&\times K_{1-n^{*}/2}\left(\sqrt{\left(2+\left(\operatorname{vec}\boldsymbol{\mathfrak{M}}\right)^{\mathrm{T}}\left(\bigotimes_{i=1}^{D}\mathbf{\Sigma}_i\right)^{-1}\operatorname{vec}\boldsymbol{\mathfrak{M}}\right)\left(\left(\operatorname{vec}\boldsymbol{\mathfrak{X}}\right)^{\mathrm{T}}\left(\bigotimes_{i=1}^{D}\mathbf{\Sigma}_i\right)^{-1}\operatorname{vec}\boldsymbol{\mathfrak{X}}\right)}\right),
\end{split}
\end{equation*}
where $n^{*}=\prod_{i=1}^{D}n_i$ and $K_{1-n^{*}/2}(\cdot)$ is the modified Bessel function of the third kind.

Thus, $\boldsymbol{\mathcal{X}}\sim\mathcal{TAL}_{\mathbf{n}}\left(\boldsymbol{\mathfrak{M}},\mathbf{\Sigma}_1,\mathbf{\Sigma}_2,\ldots,\mathbf{\Sigma}_D\right)\Leftrightarrow\boldsymbol{\mathcal{X}}\sim\mathcal{TVVG}_{\mathbf{n}}\left(\mathbb{O},\boldsymbol{\mathfrak{M}},\mathbf{\Sigma}_1,\mathbf{\Sigma}_2,\ldots,\mathbf{\Sigma}_D,1\right).$
\end{proof}
\begin{theorem}
\label{expvalueaslap}
If $\boldsymbol{\mathcal{X}}\sim\mathcal{TAL}\left(\boldsymbol{\mathfrak{M}},\mathbf{\Sigma}_1,\mathbf{\Sigma}_2,\ldots,\mathbf{\Sigma}_D\right)$, then the expected value of the random tensor $\boldsymbol{\mathcal{X}}$ is given by
\begin{equation*}
 \mathbb{E}\left[\boldsymbol{\mathcal{X}}\right]=\boldsymbol{\mathfrak{M}}.
\end{equation*}
\end{theorem}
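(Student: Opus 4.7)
The plan is to mirror the matrix variate proof of Theorem \ref{expvalueaslap} almost verbatim, replacing the matrix/Kronecker structure by its tensor counterpart and invoking the vectorization equivalence (the tensor version of Theorem \ref{vecaslap}) to reduce the statement to a known fact about the multivariate asymmetric Laplace law.

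First, I would apply Theorem \ref{vecaslap} from the tensor section to obtain
\begin{equation*}
\boldsymbol{\mathcal{X}}\sim\mathcal{TAL}_{\mathbf{n}}\left(\boldsymbol{\mathfrak{M}},\mathbf{\Sigma}_1,\mathbf{\Sigma}_2,\ldots,\mathbf{\Sigma}_D\right)\;\Longleftrightarrow\;\operatorname{vec}\boldsymbol{\mathcal{X}}\sim\mathcal{AL}_{n^{*}}\left(\operatorname{vec}\boldsymbol{\mathfrak{M}},\bigotimes_{i=1}^{D}\mathbf{\Sigma}_i\right),
\end{equation*}
so that the tensor-valued problem is transferred to an $n^{*}$-dimensional vector-valued problem.

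Next, I would cite the standard mean formula for the multivariate asymmetric Laplace distribution from \cite{kotzas}, which gives
\begin{equation*}
\mathbb{E}\left[\operatorname{vec}\boldsymbol{\mathcal{X}}\right]=\operatorname{vec}\boldsymbol{\mathfrak{M}}.
\end{equation*}
Since vectorization is a linear bijection between order-$D$ tensors of shape $\mathbf{n}$ and vectors in $\mathbb{R}^{n^{*}}$, it commutes with the expectation operator componentwise, and hence inverting $\operatorname{vec}$ on both sides yields $\mathbb{E}\left[\boldsymbol{\mathcal{X}}\right]=\boldsymbol{\mathfrak{M}}$.

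There is no real obstacle here: the work has already been done in setting up the vectorization equivalence. The only point that deserves a brief remark in the write-up is the legitimacy of passing the expectation through $\operatorname{vec}^{-1}$, which is immediate from linearity and the entrywise definition of the tensor expectation. Thus the proof will be a short three-line argument, structurally identical to the proof of Theorem \ref{expvalueaslap} in the matrix variate section.
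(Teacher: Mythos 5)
Your proposal matches the paper's own proof essentially verbatim: both invoke the tensor version of Theorem \ref{vecaslap} to reduce to $\operatorname{vec}\boldsymbol{\mathcal{X}}\sim\mathcal{AL}_{n^{*}}\left(\operatorname{vec}\boldsymbol{\mathfrak{M}},\bigotimes_{i=1}^{D}\mathbf{\Sigma}_i\right)$, cite the mean of the multivariate asymmetric Laplace law from \cite{kotzas}, and undo the vectorization. Your added remark on passing the expectation through $\operatorname{vec}^{-1}$ is a small point of extra care that the paper leaves implicit.
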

\begin{proof}
By Theorem \ref{vecaslap},
\begin{equation*}
\boldsymbol{\mathcal{X}}\sim\mathcal{TAL}_{\mathbf{n}}\left(\boldsymbol{\mathfrak{M}},\mathbf{\Sigma}_1,\mathbf{\Sigma}_2,\ldots,\mathbf{\Sigma}_D\right)\Leftrightarrow\operatorname{vec}\boldsymbol{\mathcal{X}}\sim\mathcal{AL}_{n^{*}}\left(\operatorname{vec}\boldsymbol{\mathfrak{M}},\bigotimes_{i=1}^{D}\mathbf{\Sigma}_i\right).
\end{equation*}
By the property of the multivariate asymmetric Laplace distribution \cite{kotzas},
\begin{equation*}
 \mathbb{E}\left[\operatorname{vec}\boldsymbol{\mathcal{X}}\right]=\operatorname{vec}\boldsymbol{\mathfrak{M}}.
\end{equation*}
Thus,
\begin{equation*}
 \mathbb{E}\left[\boldsymbol{\mathcal{X}}\right]=\boldsymbol{\mathfrak{M}}.
\end{equation*}
\end{proof}
\begin{theorem}
\label{kxxaslap}
If  $\boldsymbol{\mathcal{X}}\sim\mathcal{TAL}_{\mathbf{n}}\left(\boldsymbol{\mathfrak{M}},\mathbf{\Sigma}_1,\mathbf{\Sigma}_2,\ldots,\mathbf{\Sigma}_D\right)$, then the variance-covariance matrix of the random tensor $\boldsymbol{\mathcal{X}}$ is given by
\begin{equation*}
\mathbf{K}_{\boldsymbol{\mathcal{XX}}}=\bigotimes_{i=1}^{D}\mathbf{\Sigma}_i+\left(\operatorname{vec}\boldsymbol{\mathfrak{M}}\right)\left(\operatorname{vec}\boldsymbol{\mathfrak{M}}\right)^{\mathrm{T}}.
\end{equation*}
\end{theorem}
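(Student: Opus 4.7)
The plan is to mimic the same vectorization strategy the paper has used successfully for all analogous theorems (Theorems \ref{expvalueaslap}, \ref{kxxaslap} in the matrix case, and \ref{expvalueaslap} in the tensor case). All of these theorems have the same three-line skeleton: invoke the vec-representation theorem to pass to a multivariate distribution, quote the known result from the multivariate reference, then re-interpret the identity for the tensor object.

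First I would apply Theorem \ref{vecaslap}, which gives
\begin{equation*}
\boldsymbol{\mathcal{X}}\sim\mathcal{TAL}_{\mathbf{n}}\left(\boldsymbol{\mathfrak{M}},\mathbf{\Sigma}_1,\mathbf{\Sigma}_2,\ldots,\mathbf{\Sigma}_D\right)\Leftrightarrow\operatorname{vec}\boldsymbol{\mathcal{X}}\sim\mathcal{AL}_{n^{*}}\left(\operatorname{vec}\boldsymbol{\mathfrak{M}},\bigotimes_{i=1}^{D}\mathbf{\Sigma}_i\right).
\end{equation*}
Next I would cite the known variance-covariance formula for the multivariate asymmetric Laplace distribution from \cite{kotzas}, applied with location vector $\operatorname{vec}\boldsymbol{\mathfrak{M}}$ and scale matrix $\bigotimes_{i=1}^{D}\mathbf{\Sigma}_i$, to obtain
\begin{equation*}
\mathbf{K}_{\operatorname{vec}\boldsymbol{\mathcal{X}},\operatorname{vec}\boldsymbol{\mathcal{X}}}=\bigotimes_{i=1}^{D}\mathbf{\Sigma}_i+\left(\operatorname{vec}\boldsymbol{\mathfrak{M}}\right)\left(\operatorname{vec}\boldsymbol{\mathfrak{M}}\right)^{\mathrm{T}}.
\end{equation*}
Finally, since the variance-covariance matrix of a random tensor is defined, by convention, as the variance-covariance matrix of its vectorization (this is the same convention used in the preceding matrix theorem \ref{kxxaslap} of Section 1, where $\mathbf{K}_{\mathbf{XX}}$ was identified with $\mathbf{K}_{\operatorname{vec}\mathbf{X},\operatorname{vec}\mathbf{X}}$), I would conclude
\begin{equation*}
\mathbf{K}_{\boldsymbol{\mathcal{XX}}}=\bigotimes_{i=1}^{D}\mathbf{\Sigma}_i+\left(\operatorname{vec}\boldsymbol{\mathfrak{M}}\right)\left(\operatorname{vec}\boldsymbol{\mathfrak{M}}\right)^{\mathrm{T}}.
\end{equation*}

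There is no genuine obstacle here. No Kronecker or trace manipulations are required (unlike Theorem \ref{vecaslap}, which did the heavy lifting), and no properties of $K_{\nu}$ appear. The only modest point to check is the conventional identification $\mathbf{K}_{\boldsymbol{\mathcal{XX}}}\equiv\mathbf{K}_{\operatorname{vec}\boldsymbol{\mathcal{X}},\operatorname{vec}\boldsymbol{\mathcal{X}}}$ for a tensor-valued random variable, but this is already implicit in the statement (the right-hand side is manifestly an $n^{*}\times n^{*}$ matrix indexed through $\operatorname{vec}\boldsymbol{\mathfrak{M}}$), and it is the same convention the paper used for the matrix case. The proof will therefore be a short three-step reduction, exactly parallel to Theorem \ref{expvalueaslap} for the tensor expectation.
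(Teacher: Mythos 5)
Your proposal matches the paper's own proof essentially verbatim: both invoke the vectorization equivalence of Theorem \ref{vecaslap}, quote the variance--covariance formula for the multivariate asymmetric Laplace distribution from \cite{kotzas}, and identify $\mathbf{K}_{\boldsymbol{\mathcal{XX}}}$ with $\mathbf{K}_{\operatorname{vec}\boldsymbol{\mathcal{X}},\operatorname{vec}\boldsymbol{\mathcal{X}}}$. No differences worth noting.
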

\begin{proof}
By Theorem \ref{vecaslap},
\begin{equation*}
\boldsymbol{\mathcal{X}}\sim\mathcal{TAL}_{\mathbf{n}}\left(\boldsymbol{\mathfrak{M}},\mathbf{\Sigma}_1,\mathbf{\Sigma}_2,\ldots,\mathbf{\Sigma}_D\right)\Leftrightarrow\operatorname{vec}\boldsymbol{\mathcal{X}}\sim\mathcal{AL}_{n^{*}}\left(\operatorname{vec}\boldsymbol{\mathfrak{M}},\bigotimes_{i=1}^{D}\mathbf{\Sigma}_i\right).
\end{equation*}
By the property of the multivariate asymmetric Laplace distribution \cite{kotzas}, if $\operatorname{vec}\boldsymbol{\mathcal{X}}\sim\mathcal{AL}_{n^{*}}\left(\operatorname{vec}\boldsymbol{\mathfrak{M}},\bigotimes_{i=1}^{D}\mathbf{\Sigma}_i\right)$, then
\begin{equation*}
\mathbf{K}_{\operatorname{vec}\boldsymbol{\mathcal{X}},\operatorname{vec}\boldsymbol{\mathcal{X}}}=\bigotimes_{i=1}^{D}\mathbf{\Sigma}_i+\left(\operatorname{vec}\boldsymbol{\mathfrak{M}}\right)\left(\operatorname{vec}\boldsymbol{\mathfrak{M}}\right)^{\mathrm{T}}.
\end{equation*}
Thus,
\begin{equation*}
\mathbf{K}_{\boldsymbol{\mathcal{XX}}}=\mathbf{K}_{\operatorname{vec}\boldsymbol{\mathcal{X}},\operatorname{vec}\boldsymbol{\mathcal{X}}}=\bigotimes_{i=1}^{D}\mathbf{\Sigma}_i+\left(\operatorname{vec}\boldsymbol{\mathfrak{M}}\right)\left(\operatorname{vec}\boldsymbol{\mathfrak{M}}\right)^{\mathrm{T}}.
\end{equation*}
\end{proof}
\subsection{Tensor variate generalized asymmetric Laplace distribution}
\begin{defi}
\label{pmfmataslap}
Let  $\boldsymbol{\mathcal{X}}$ is a random order-D tensor, with dimensional lengths $n_1\times n_2 \times \cdots \times n_D$, with realization $\boldsymbol{\mathfrak{X}}$. $\boldsymbol{\mathcal{X}}\sim\mathcal{TGAL}\left(\boldsymbol{\mathfrak{M}},\mathbf{\Sigma}_1,\mathbf{\Sigma}_2,\ldots,\mathbf{\Sigma}_D,\lambda\right)$ if and only if the probability density function is given by
\begin{equation*}
\begin{split}
f_{\boldsymbol{\mathcal{X}}}\left(\boldsymbol{\mathfrak{X}}\right)&=\frac{2\operatorname{exp}\left(\left(\operatorname{vec}\boldsymbol{\mathfrak{X}}\right)^{\mathrm{T}}\left(\bigotimes_{i=1}^{D}\mathbf{\Sigma}_i\right)^{-1}\operatorname{vec}\boldsymbol{\mathfrak{M}}\right)}{\left(2\pi\right)^{n^{*}/2}\Gamma\left(\lambda\right)\prod_{i=1}^{D}\left(\det\mathbf{\Sigma}_{i}\right)^{n^{*}/(2n_i)}}\left(\frac{\left(\operatorname{vec}\boldsymbol{\mathfrak{X}}\right)^{\mathrm{T}}\left(\bigotimes_{i=1}^{D}\mathbf{\Sigma}_i\right)^{-1}\operatorname{vec}\boldsymbol{\mathfrak{X}}}{2+\left(\operatorname{vec}\boldsymbol{\mathfrak{M}}\right)^{\mathrm{T}}\left(\bigotimes_{i=1}^{D}\mathbf{\Sigma}_i\right)^{-1}\operatorname{vec}\boldsymbol{\mathfrak{M}}}\right)^{\lambda/2-n^{*}/4}\\&\times K_{\lambda-n^{*}/2}\left(\sqrt{\left(2+\left(\operatorname{vec}\boldsymbol{\mathfrak{M}}\right)^{\mathrm{T}}\left(\bigotimes_{i=1}^{D}\mathbf{\Sigma}_i\right)^{-1}\operatorname{vec}\boldsymbol{\mathfrak{M}}\right)\left(\left(\operatorname{vec}\boldsymbol{\mathfrak{X}}\right)^{\mathrm{T}}\left(\bigotimes_{i=1}^{D}\mathbf{\Sigma}_i\right)^{-1}\operatorname{vec}\boldsymbol{\mathfrak{X}}\right)}\right),
\end{split}
\end{equation*}
where $n^{*}=\prod_{i=1}^{D}n_i$ and $K_{\lambda-n^{*}/2}(\cdot)$ is the modified Bessel function of the third kind.
\end{defi}

\subsection{Relationship to multivariate generalized asymmetric Laplace distribution}
\begin{theorem}
\label{vecaslap}
Let  $\boldsymbol{\mathcal{X}}$ is a random order-D tensor, with dimensional lengths $n_1\times n_2 \times \cdots \times n_D$, with realization $\boldsymbol{\mathfrak{X}}$. $\boldsymbol{\mathcal{X}}\sim\mathcal{TGAL}_{\mathbf{n}}\left(\boldsymbol{\mathfrak{M}},\mathbf{\Sigma}_1,\mathbf{\Sigma}_2,\ldots,\mathbf{\Sigma}_D,\lambda\right)\Leftrightarrow\operatorname{vec}\boldsymbol{\mathcal{X}}\sim\mathcal{GAL}_{n^{*}}\left(\operatorname{vec}\boldsymbol{\mathfrak{M}},\bigotimes_{i=1}^{D}\mathbf{\Sigma}_i,\lambda\right).$
\end{theorem}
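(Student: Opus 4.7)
The plan is to mirror exactly the argument used in the earlier Theorem~\ref{vecaslap} for the $\mathcal{TAL}$ case. The definition of $\mathcal{TGAL}_{\mathbf{n}}$ given just above is already phrased entirely in terms of $\operatorname{vec}\boldsymbol{\mathfrak{X}}$, $\operatorname{vec}\boldsymbol{\mathfrak{M}}$, and the Kronecker product $\bigotimes_{i=1}^{D}\mathbf{\Sigma}_i$, so the tensor density and the multivariate $\mathcal{GAL}$ density of the vectorization agree on inspection; the proof is pure pattern-matching.

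Concretely, I would first transcribe $f_{\boldsymbol{\mathcal{X}}}(\boldsymbol{\mathfrak{X}})$ from the defining formula of $\mathcal{TGAL}_{\mathbf{n}}(\boldsymbol{\mathfrak{M}},\mathbf{\Sigma}_1,\ldots,\mathbf{\Sigma}_D,\lambda)$, and then write down the density $f_{\operatorname{vec}\boldsymbol{\mathcal{X}}}(\operatorname{vec}\boldsymbol{\mathfrak{X}})$ corresponding to $\mathcal{GAL}_{n^{*}}\!\left(\operatorname{vec}\boldsymbol{\mathfrak{M}},\bigotimes_{i=1}^{D}\mathbf{\Sigma}_i,\lambda\right)$ as given by the definition in \cite{kozub}. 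The exponential prefactor, the ratio raised to the power $\lambda/2-n^{*}/4$, the argument of the Bessel function, the index $\lambda-n^{*}/2$, and the gamma-factor $\Gamma(\lambda)$ all match verbatim.

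The only non-cosmetic step is to check that the two normalizing constants agree, i.e.\ that
\begin{equation*}
\left(\det\bigotimes_{i=1}^{D}\mathbf{\Sigma}_i\right)^{1/2}=\prod_{i=1}^{D}\left(\det\mathbf{\Sigma}_i\right)^{n^{*}/(2n_i)},
\end{equation*}
which follows by iterating the Kronecker-product determinant identity $\det(\mathbf{A}\otimes\mathbf{B})=(\det\mathbf{A})^{q}(\det\mathbf{B})^{p}$ for $\mathbf{A}\in\mathbb{R}^{p\times p}$ and $\mathbf{B}\in\mathbb{R}^{q\times q}$, exactly as invoked in the proof of Theorem~\ref{vecaslapg}.

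I do not anticipate a real obstacle; if anything, the only subtlety worth flagging is that $\operatorname{vec}$ is a bijective linear reshape with unit Jacobian determinant, so equality of the densities at corresponding points is equivalent to equality in distribution of $\boldsymbol{\mathcal{X}}$ and the tensor obtained by unvec-ing a $\mathcal{GAL}_{n^{*}}$ vector. The proof then concludes by reading off both density expressions, noting their identity, and invoking the ``if and only if'' in each defining statement.
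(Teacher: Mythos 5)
Your proposal follows essentially the same route as the paper: the paper's proof simply transcribes the density from the definition of $\mathcal{TGAL}_{\mathbf{n}}$, writes down the density of $\mathcal{GAL}_{n^{*}}\left(\operatorname{vec}\boldsymbol{\mathfrak{M}},\bigotimes_{i=1}^{D}\mathbf{\Sigma}_i,\lambda\right)$, observes that the two expressions coincide verbatim, and concludes via the two ``if and only if'' definitions. If anything you are slightly more careful than the paper, since you explicitly verify the normalizing-constant identity $\left(\det\bigotimes_{i=1}^{D}\mathbf{\Sigma}_i\right)^{1/2}=\prod_{i=1}^{D}\left(\det\mathbf{\Sigma}_i\right)^{n^{*}/(2n_i)}$, which the paper's proof leaves implicit.
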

\begin{proof}
\begin{equation*}
\begin{split}
f_{\boldsymbol{\mathcal{X}}}\left(\boldsymbol{\mathfrak{X}}\right)&=\frac{2\operatorname{exp}\left(\left(\operatorname{vec}\boldsymbol{\mathfrak{X}}\right)^{\mathrm{T}}\left(\bigotimes_{i=1}^{D}\mathbf{\Sigma}_i\right)^{-1}\operatorname{vec}\boldsymbol{\mathfrak{M}}\right)}{\left(2\pi\right)^{n^{*}/2}\Gamma\left(\lambda\right)\prod_{i=1}^{D}\left(\det\mathbf{\Sigma}_{i}\right)^{n^{*}/(2n_i)}}\left(\frac{\left(\operatorname{vec}\boldsymbol{\mathfrak{X}}\right)^{\mathrm{T}}\left(\bigotimes_{i=1}^{D}\mathbf{\Sigma}_i\right)^{-1}\operatorname{vec}\boldsymbol{\mathfrak{X}}}{2+\left(\operatorname{vec}\boldsymbol{\mathfrak{M}}\right)^{\mathrm{T}}\left(\bigotimes_{i=1}^{D}\mathbf{\Sigma}_i\right)^{-1}\operatorname{vec}\boldsymbol{\mathfrak{M}}}\right)^{\lambda/2-n^{*}/4}\\&\times K_{\lambda-n^{*}/2}\left(\sqrt{\left(2+\left(\operatorname{vec}\boldsymbol{\mathfrak{M}}\right)^{\mathrm{T}}\left(\bigotimes_{i=1}^{D}\mathbf{\Sigma}_i\right)^{-1}\operatorname{vec}\boldsymbol{\mathfrak{M}}\right)\left(\left(\operatorname{vec}\boldsymbol{\mathfrak{X}}\right)^{\mathrm{T}}\left(\bigotimes_{i=1}^{D}\mathbf{\Sigma}_i\right)^{-1}\operatorname{vec}\boldsymbol{\mathfrak{X}}\right)}\right),
\end{split}
\end{equation*}
where $n^{*}=\prod_{i=1}^{D}n_i$ and $K_{\lambda-n^{*}/2}(\cdot)$ is the modified Bessel function of the third kind.

By the definition of the multivariate generalized asymmetric Laplace distribution \cite{kozub}, $\operatorname{vec}\boldsymbol{\mathcal{X}}\sim\mathcal{GAL}_{n^{*}}\left(\operatorname{vec}\boldsymbol{\mathfrak{M}},\bigotimes_{i=1}^{D}\mathbf{\Sigma}_i,\lambda\right)$ if and only if
\begin{equation*}
\begin{split}
f_{\operatorname{vec}\boldsymbol{\mathcal{X}}}\left(\operatorname{vec}\boldsymbol{\mathfrak{X}}\right)&=\frac{2\operatorname{exp}\left(\left(\operatorname{vec}\boldsymbol{\mathfrak{X}}\right)^{\mathrm{T}}\left(\bigotimes_{i=1}^{D}\mathbf{\Sigma}_i\right)^{-1}\operatorname{vec}\boldsymbol{\mathfrak{M}}\right)}{\left(2\pi\right)^{n^{*}/2}\Gamma\left(\lambda\right)\prod_{i=1}^{D}\left(\det\mathbf{\Sigma}_{i}\right)^{n^{*}/(2n_i)}}\left(\frac{\left(\operatorname{vec}\boldsymbol{\mathfrak{X}}\right)^{\mathrm{T}}\left(\bigotimes_{i=1}^{D}\mathbf{\Sigma}_i\right)^{-1}\operatorname{vec}\boldsymbol{\mathfrak{X}}}{2+\left(\operatorname{vec}\boldsymbol{\mathfrak{M}}\right)^{\mathrm{T}}\left(\bigotimes_{i=1}^{D}\mathbf{\Sigma}_i\right)^{-1}\operatorname{vec}\boldsymbol{\mathfrak{M}}}\right)^{\lambda/2-n^{*}/4}\\&\times K_{\lambda-n^{*}/2}\left(\sqrt{\left(2+\left(\operatorname{vec}\boldsymbol{\mathfrak{M}}\right)^{\mathrm{T}}\left(\bigotimes_{i=1}^{D}\mathbf{\Sigma}_i\right)^{-1}\operatorname{vec}\boldsymbol{\mathfrak{M}}\right)\left(\left(\operatorname{vec}\boldsymbol{\mathfrak{X}}\right)^{\mathrm{T}}\left(\bigotimes_{i=1}^{D}\mathbf{\Sigma}_i\right)^{-1}\operatorname{vec}\boldsymbol{\mathfrak{X}}\right)}\right),
\end{split}
\end{equation*}
where $n^{*}=\prod_{i=1}^{D}n_i$ and $K_{\lambda-n^{*}/2}(\cdot)$ is the modified Bessel function of the third kind.

Thus, $\boldsymbol{\mathcal{X}}\sim\mathcal{TGAL}_{\mathbf{n}}\left(\boldsymbol{\mathfrak{M}},\mathbf{\Sigma}_1,\mathbf{\Sigma}_2,\ldots,\mathbf{\Sigma}_D,\lambda\right)\Leftrightarrow\operatorname{vec}\boldsymbol{\mathcal{X}}\sim\mathcal{GAL}_{n^{*}}\left(\operatorname{vec}\boldsymbol{\mathfrak{M}},\bigotimes_{i=1}^{D}\mathbf{\Sigma}_i,\lambda\right).$
\end{proof}
\begin{theorem}
\label{charmataslap}
Let $\boldsymbol{\mathfrak{T}}$ is an $\mathbf{n}$ dimensional order-D tensor. $\boldsymbol{\mathcal{X}}\sim\mathcal{TGAL}_{\mathbf{n}}\left(\boldsymbol{\mathfrak{M}},\mathbf{\Sigma}_1,\mathbf{\Sigma}_2,\ldots,\mathbf{\Sigma}_D,\lambda\right)$ if and only if the characteristic function of the random tensor is given by
\begin{equation*}
\varphi_{\boldsymbol{\mathcal{X}}}\left(\boldsymbol{\mathfrak{T}}\right)=\left(\frac{1}{1+\frac{1}{2}\left(\operatorname{vec}\boldsymbol{\mathfrak{T}}\right)^{\mathrm{T}}\left(\bigotimes_{i=1}^{D}\mathbf{\Sigma}_i\right)\operatorname{vec}\boldsymbol{\mathfrak{T}}-\mathrm{i}\left(\operatorname{vec}\boldsymbol{\mathfrak{M}}\right)^{\mathrm{T}}\operatorname{vec}\boldsymbol{\mathfrak{T}}}\right)^{\lambda}.
\end{equation*}
\end{theorem}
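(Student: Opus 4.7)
The plan is to mirror the pattern already established by the proof of Theorem \ref{charmataslap} (the ungeneralized tensor case) and by the matrix analogue Theorem \ref{charmataslapg}. The key observation is that the vectorization theorem (the one labeled \ref{vecaslap} in the current subsection) has already reduced the tensor-variate TGAL distribution to the multivariate GAL distribution on $\operatorname{vec}\boldsymbol{\mathcal{X}}$, so the work reduces to transcribing the multivariate GAL characteristic function from \cite{kozub} and recognizing that the inner products appearing there are exactly the tensor pairings on the right-hand side of the claimed formula.

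Concretely, I would first invoke Theorem \ref{vecaslap} to get the equivalence
\begin{equation*}
\boldsymbol{\mathcal{X}}\sim\mathcal{TGAL}_{\mathbf{n}}\left(\boldsymbol{\mathfrak{M}},\mathbf{\Sigma}_1,\ldots,\mathbf{\Sigma}_D,\lambda\right)\Leftrightarrow\operatorname{vec}\boldsymbol{\mathcal{X}}\sim\mathcal{GAL}_{n^{*}}\left(\operatorname{vec}\boldsymbol{\mathfrak{M}},\bigotimes_{i=1}^{D}\mathbf{\Sigma}_i,\lambda\right).
\end{equation*}
Then I would quote the characteristic function of the multivariate generalized asymmetric Laplace distribution from \cite{kozub}, applied with mean vector $\operatorname{vec}\boldsymbol{\mathfrak{M}}$, scale matrix $\bigotimes_{i=1}^{D}\mathbf{\Sigma}_i$, shape $\lambda$, and argument $\operatorname{vec}\boldsymbol{\mathfrak{T}}$, giving
\begin{equation*}
\varphi_{\operatorname{vec}\boldsymbol{\mathcal{X}}}\left(\operatorname{vec}\boldsymbol{\mathfrak{T}}\right)=\left(\frac{1}{1+\frac{1}{2}\left(\operatorname{vec}\boldsymbol{\mathfrak{T}}\right)^{\mathrm{T}}\left(\bigotimes_{i=1}^{D}\mathbf{\Sigma}_i\right)\operatorname{vec}\boldsymbol{\mathfrak{T}}-\mathrm{i}\left(\operatorname{vec}\boldsymbol{\mathfrak{M}}\right)^{\mathrm{T}}\operatorname{vec}\boldsymbol{\mathfrak{T}}}\right)^{\lambda}.
\end{equation*}

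The final step is the identification $\varphi_{\boldsymbol{\mathcal{X}}}(\boldsymbol{\mathfrak{T}})=\varphi_{\operatorname{vec}\boldsymbol{\mathcal{X}}}(\operatorname{vec}\boldsymbol{\mathfrak{T}})$. This follows from the natural definition of the tensor characteristic function as $\mathbb{E}[\exp(\mathrm{i}\langle\boldsymbol{\mathcal{X}},\boldsymbol{\mathfrak{T}}\rangle)]$, since the tensor inner product coincides with $(\operatorname{vec}\boldsymbol{\mathcal{X}})^{\mathrm{T}}\operatorname{vec}\boldsymbol{\mathfrak{T}}$ entrywise; vectorization is a linear isometry for the Frobenius (entrywise) pairing. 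No nontrivial Kronecker-product manipulation is needed here, since, unlike the matrix version in Theorem \ref{charmataslapg}, the formula is already written directly in terms of $\operatorname{vec}\boldsymbol{\mathfrak{T}}$ and $\bigotimes_{i=1}^{D}\mathbf{\Sigma}_i$ rather than in the ``$\operatorname{tr}(\mathbf{\Psi}\mathbf{T}^{\mathrm{T}}\mathbf{\Sigma}\mathbf{T})$'' form.

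The only step that requires any thought is the pairing identification in the last paragraph; everything else is a direct quotation of previously proved results. I expect this to be essentially routine, and the write-up will closely parallel the proof of Theorem \ref{charmataslap} for the ungeneralized tensor case, with the denominator raised to the power $\lambda$ throughout.
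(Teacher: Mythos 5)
Your proposal matches the paper's proof essentially step for step: invoke the vectorization equivalence, quote the characteristic function of $\mathcal{GAL}_{n^{*}}\left(\operatorname{vec}\boldsymbol{\mathfrak{M}},\bigotimes_{i=1}^{D}\mathbf{\Sigma}_i,\lambda\right)$ from the cited reference, and identify $\varphi_{\boldsymbol{\mathcal{X}}}\left(\boldsymbol{\mathfrak{T}}\right)$ with $\varphi_{\operatorname{vec}\boldsymbol{\mathcal{X}}}\left(\operatorname{vec}\boldsymbol{\mathfrak{T}}\right)$. Your remark justifying that last identification via the Frobenius pairing is a small addition the paper leaves implicit, but the argument is the same.
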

\begin{proof}
By Theorem \ref{vecaslap},
\begin{equation*}
\boldsymbol{\mathcal{X}}\sim\mathcal{TGAL}_{\mathbf{n}}\left(\boldsymbol{\mathfrak{M}},\mathbf{\Sigma}_1,\mathbf{\Sigma}_2,\ldots,\mathbf{\Sigma}_D,\lambda\right)\Leftrightarrow\operatorname{vec}\boldsymbol{\mathcal{X}}\sim\mathcal{GAL}_{n^{*}}\left(\operatorname{vec}\boldsymbol{\mathfrak{M}},\bigotimes_{i=1}^{D}\mathbf{\Sigma}_i,\lambda\right).
\end{equation*}
By the definition of the multivariate generalized asymmetric Laplace distribution \cite{kozub}, $\operatorname{vec}\boldsymbol{\mathcal{X}}\sim\mathcal{GAL}_{n^{*}}\left(\operatorname{vec}\boldsymbol{\mathfrak{M}},\bigotimes_{i=1}^{D}\mathbf{\Sigma}_i,\lambda\right)$ if and only if
\begin{equation*}
\varphi_{\operatorname{vec}\boldsymbol{\mathcal{X}}}\left(\operatorname{vec}\boldsymbol{\mathfrak{T}}\right)=\left(\frac{1}{1+\frac{1}{2}\left(\operatorname{vec}\boldsymbol{\mathfrak{T}}\right)^{\mathrm{T}}\left(\bigotimes_{i=1}^{D}\mathbf{\Sigma}_i\right)\operatorname{vec}\boldsymbol{\mathfrak{T}}-\mathrm{i}\left(\operatorname{vec}\boldsymbol{\mathfrak{M}}\right)^{\mathrm{T}}\operatorname{vec}\boldsymbol{\mathfrak{T}}}\right)^{\lambda}.
\end{equation*}
Thus,
\begin{equation*}
\varphi_{\operatorname{vec}\boldsymbol{\mathcal{X}}}\left(\operatorname{vec}\boldsymbol{\mathfrak{T}}\right)=\varphi_{\boldsymbol{\mathcal{X}}}\left(\boldsymbol{\mathfrak{T}}\right)=\left(\frac{1}{1+\frac{1}{2}\left(\operatorname{vec}\boldsymbol{\mathfrak{T}}\right)^{\mathrm{T}}\left(\bigotimes_{i=1}^{D}\mathbf{\Sigma}_i\right)\operatorname{vec}\boldsymbol{\mathfrak{T}}-\mathrm{i}\left(\operatorname{vec}\boldsymbol{\mathfrak{M}}\right)^{\mathrm{T}}\operatorname{vec}\boldsymbol{\mathfrak{T}}}\right)^{\lambda}.
\end{equation*}
\end{proof}
\begin{theorem}
Let $\boldsymbol{\mathcal{Y}}\sim\mathcal{TGAL}_{\mathbf{n}}\left(\boldsymbol{\mathfrak{M}},\mathbf{\Sigma}_1,\mathbf{\Sigma}_2,\ldots,\mathbf{\Sigma}_D,\lambda\right)$, $\boldsymbol{\mathcal{X}}\sim\mathcal{TN}_{\mathbf{n}}\left(\mathbb{O},\mathbf{\Sigma}_1,\mathbf{\Sigma}_2,\ldots,\mathbf{\Sigma}_D\right)$ and $W$ has the standart gamma distribution with shape parameter $\lambda$, independent of $\boldsymbol{\mathcal{X}}$, then 
\begin{equation*}
\boldsymbol{\mathcal{Y}}=\boldsymbol{\mathfrak{M}}W+W^{1/2}\boldsymbol{\mathcal{X}}.
\end{equation*}
\end{theorem}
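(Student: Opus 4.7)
The plan is to mirror the matrix-variate proof given earlier in the paper, as well as the analogous theorem for the tensor variate asymmetric Laplace distribution. The strategy is to reduce to the multivariate generalized asymmetric Laplace case via the vectorization operator and then cite the known normal mean--variance mixture representation from \cite{kozub}.

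First, I would set $\boldsymbol{\mathcal{Y}}=\boldsymbol{\mathfrak{M}}W+W^{1/2}\boldsymbol{\mathcal{X}}$ by definition and apply $\operatorname{vec}$ to both sides. Since $W$ and $W^{1/2}$ are scalars, linearity of $\operatorname{vec}$ gives
\begin{equation*}
\operatorname{vec}\boldsymbol{\mathcal{Y}}=\left(\operatorname{vec}\boldsymbol{\mathfrak{M}}\right)W+W^{1/2}\operatorname{vec}\boldsymbol{\mathcal{X}}.
\end{equation*}
Next, I invoke the vectorization property of the tensor normal distribution from \cite[Theorem 2.1]{galtensor} to identify $\operatorname{vec}\boldsymbol{\mathcal{X}}\sim\mathcal{N}_{n^{*}}\!\left(\mathbf{0},\bigotimes_{i=1}^{D}\mathbf{\Sigma}_i\right)$, noting that independence of $W$ from $\boldsymbol{\mathcal{X}}$ is preserved after vectorization.

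With these two ingredients in hand, the mixture representation of the multivariate generalized asymmetric Laplace distribution from \cite{kozub} applies verbatim to the vectorized identity, yielding
\begin{equation*}
\operatorname{vec}\boldsymbol{\mathcal{Y}}\sim\mathcal{GAL}_{n^{*}}\!\left(\operatorname{vec}\boldsymbol{\mathfrak{M}},\bigotimes_{i=1}^{D}\mathbf{\Sigma}_i,\lambda\right).
\end{equation*}
Finally, the previously established Theorem \ref{vecaslap} for the tensor generalized case lifts this back to the tensor level and gives $\boldsymbol{\mathcal{Y}}\sim\mathcal{TGAL}_{\mathbf{n}}\left(\boldsymbol{\mathfrak{M}},\mathbf{\Sigma}_1,\ldots,\mathbf{\Sigma}_D,\lambda\right)$, as required.

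There is no real obstacle in the argument: once vectorization is used, everything reduces to cited multivariate results. The only point requiring any care is ensuring that the scalar multiplications $\boldsymbol{\mathfrak{M}}W$ and $W^{1/2}\boldsymbol{\mathcal{X}}$ commute with $\operatorname{vec}$ componentwise, which is immediate, and that the independence structure between $W$ and $\boldsymbol{\mathcal{X}}$ carries over to $W$ and $\operatorname{vec}\boldsymbol{\mathcal{X}}$, which holds because vectorization is a deterministic bijection.
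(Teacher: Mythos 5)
Your proposal is correct and follows essentially the same route as the paper's own proof: vectorize the identity $\boldsymbol{\mathcal{Y}}=\boldsymbol{\mathfrak{M}}W+W^{1/2}\boldsymbol{\mathcal{X}}$, use the tensor normal vectorization property from \cite[Theorem 2.1]{galtensor}, apply the mixture representation from \cite{kozub} to conclude $\operatorname{vec}\boldsymbol{\mathcal{Y}}\sim\mathcal{GAL}_{n^{*}}$, and lift back via the tensor-level vectorization theorem. Your added remarks on the preservation of independence under vectorization are a minor refinement the paper leaves implicit.
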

\begin{proof}
Let $\boldsymbol{\mathcal{Y}}=\boldsymbol{\mathfrak{M}}W+W^{1/2}\boldsymbol{\mathcal{X}}$.

Therefore,
\begin{equation*}
\operatorname{vec}\boldsymbol{\mathcal{Y}}=\operatorname{vec}\left(\boldsymbol{\mathfrak{M}}W+W^{1/2}\boldsymbol{\mathcal{X}}\right)=\left(\operatorname{vec}\boldsymbol{\mathfrak{M}}\right)W+W^{1/2}\operatorname{vec}\boldsymbol{\mathcal{X}}.
\end{equation*}
By the definition of the tensor normal distribution \cite[Theorem 2.1]{galtensor},
\begin{equation*}
\boldsymbol{\mathcal{X}}\sim\mathcal{TN}_{\mathbf{n}}\left(\boldsymbol{\mathfrak{M}},\mathbf{\Sigma}_1,\mathbf{\Sigma}_2,\ldots,\mathbf{\Sigma}_D\right)\Leftrightarrow\operatorname{vec}\boldsymbol{\mathcal{X}}\sim\mathcal{N}_{n^{*}}\left(\operatorname{vec}\boldsymbol{\mathfrak{M}},\bigotimes_{i=1}^{D}\mathbf{\Sigma}_i\right).
\end{equation*}
By the theorem \cite{kozub},
\begin{equation*}
\operatorname{vec}\boldsymbol{\mathcal{Y}}\sim\mathcal{GAL}_{n^{*}}\left(\operatorname{vec}\boldsymbol{\mathfrak{M}},\bigotimes_{i=1}^{D}\mathbf{\Sigma}_i,\lambda\right).
\end{equation*}
By Theorem \ref{vecaslap},
\begin{equation*}
\operatorname{vec}\boldsymbol{\mathcal{Y}}\sim\mathcal{GAL}_{n^{*}}\left(\operatorname{vec}\boldsymbol{\mathfrak{M}},\bigotimes_{i=1}^{D}\mathbf{\Sigma}_i,\lambda\right)\Leftrightarrow \boldsymbol{\mathcal{Y}}\sim\mathcal{TGAL}_{\mathbf{n}}\left(\boldsymbol{\mathfrak{M}},\mathbf{\Sigma}_1,\mathbf{\Sigma}_2,\ldots,\mathbf{\Sigma}_D,\lambda\right).
\end{equation*}
\end{proof}
\begin{theorem}
\label{expvalueaslap}
If $\boldsymbol{\mathcal{X}}\sim\mathcal{TGAL}\left(\boldsymbol{\mathfrak{M}},\mathbf{\Sigma}_1,\mathbf{\Sigma}_2,\ldots,\mathbf{\Sigma}_D,\lambda\right)$, then the expected value of the random tensor $\boldsymbol{\mathcal{X}}$ is given by
\begin{equation*}
 \mathbb{E}\left[\boldsymbol{\mathcal{X}}\right]=\lambda\boldsymbol{\mathfrak{M}}.
\end{equation*}
\end{theorem}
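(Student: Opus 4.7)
The plan is to follow the template that has been used for every analogous expectation result in the paper: reduce to a multivariate statement via the vectorization theorem, apply the known moment formula for the multivariate generalized asymmetric Laplace distribution, and then translate the result back to the tensor setting using the fact that vectorization is a bijective linear map that commutes with expectation.

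Concretely, first I would invoke the tensor vectorization theorem (the Theorem stating $\boldsymbol{\mathcal{X}}\sim\mathcal{TGAL}_{\mathbf{n}}\left(\boldsymbol{\mathfrak{M}},\mathbf{\Sigma}_1,\ldots,\mathbf{\Sigma}_D,\lambda\right)\Leftrightarrow\operatorname{vec}\boldsymbol{\mathcal{X}}\sim\mathcal{GAL}_{n^{*}}(\operatorname{vec}\boldsymbol{\mathfrak{M}},\bigotimes_{i=1}^{D}\mathbf{\Sigma}_i,\lambda)$) to transfer the hypothesis to $\operatorname{vec}\boldsymbol{\mathcal{X}}$. Then I would cite the known first-moment property of the multivariate generalized asymmetric Laplace distribution from \cite{kozub} to conclude that
\begin{equation*}
\mathbb{E}\left[\operatorname{vec}\boldsymbol{\mathcal{X}}\right]=\lambda\operatorname{vec}\boldsymbol{\mathfrak{M}}.
\end{equation*}

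Finally, since vectorization is linear, $\lambda\operatorname{vec}\boldsymbol{\mathfrak{M}}=\operatorname{vec}(\lambda\boldsymbol{\mathfrak{M}})$, and since $\operatorname{vec}$ is a bijection between tensors of shape $\mathbf{n}$ and vectors of length $n^{*}$ that commutes with expectation entrywise, this immediately yields $\mathbb{E}[\boldsymbol{\mathcal{X}}]=\lambda\boldsymbol{\mathfrak{M}}$. This exactly mirrors the proof of the corresponding matrix-variate result (Theorem \ref{expvalueaslapg}) in the previous section.

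There is really no main obstacle here; the only subtlety is conceptual rather than technical, namely noting that expectation of a random tensor is defined componentwise and therefore commutes with $\operatorname{vec}$, so that an identity established for $\mathbb{E}[\operatorname{vec}\boldsymbol{\mathcal{X}}]$ transfers back to $\mathbb{E}[\boldsymbol{\mathcal{X}}]$ without any further work. The argument is essentially a three-line citation chain.
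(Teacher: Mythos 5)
Your proposal is correct and follows exactly the paper's own argument: reduce via the vectorization theorem to $\operatorname{vec}\boldsymbol{\mathcal{X}}\sim\mathcal{GAL}_{n^{*}}\left(\operatorname{vec}\boldsymbol{\mathfrak{M}},\bigotimes_{i=1}^{D}\mathbf{\Sigma}_i,\lambda\right)$, cite the first-moment formula from the multivariate generalized asymmetric Laplace literature, and pull back through $\operatorname{vec}$. Your explicit remark that $\operatorname{vec}$ is a bijection commuting with componentwise expectation is a small clarification the paper leaves implicit, but the route is identical.
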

\begin{proof}
By Theorem \ref{vecaslap},
\begin{equation*}
\boldsymbol{\mathcal{X}}\sim\mathcal{TGAL}_{\mathbf{n}}\left(\boldsymbol{\mathfrak{M}},\mathbf{\Sigma}_1,\mathbf{\Sigma}_2,\ldots,\mathbf{\Sigma}_D,\lambda\right)\Leftrightarrow\operatorname{vec}\boldsymbol{\mathcal{X}}\sim\mathcal{GAL}_{n^{*}}\left(\operatorname{vec}\boldsymbol{\mathfrak{M}},\bigotimes_{i=1}^{D}\mathbf{\Sigma}_i,\lambda\right).
\end{equation*}
By the property of the multivariate generalized asymmetric Laplace distribution \cite{kozub}, if $\operatorname{vec}\boldsymbol{\mathcal{X}}\sim\mathcal{GAL}_{n^{*}}\left(\operatorname{vec}\boldsymbol{\mathfrak{M}},\bigotimes_{i=1}^{D}\mathbf{\Sigma}_i,\lambda\right)$, then
\begin{equation*}
 \mathbb{E}\left[\operatorname{vec}\boldsymbol{\mathcal{X}}\right]=\lambda\operatorname{vec}\boldsymbol{\mathfrak{M}}=\operatorname{vec}\left(\lambda\boldsymbol{\mathfrak{M}}\right).
\end{equation*}
Thus,
\begin{equation*}
 \mathbb{E}\left[\boldsymbol{\mathcal{X}}\right]=\lambda\boldsymbol{\mathfrak{M}}.
\end{equation*}
\end{proof}
\begin{theorem}
\label{kxxaslap}
If  $\boldsymbol{\mathcal{X}}\sim\mathcal{TGAL}_{\mathbf{n}}\left(\boldsymbol{\mathfrak{M}},\mathbf{\Sigma}_1,\mathbf{\Sigma}_2,\ldots,\mathbf{\Sigma}_D,\lambda\right)$, then the variance-covariance matrix of the random tensor $\boldsymbol{\mathcal{X}}$ is given by
\begin{equation*}
\mathbf{K}_{\boldsymbol{\mathcal{XX}}}=\lambda\left(\bigotimes_{i=1}^{D}\mathbf{\Sigma}_i+\left(\operatorname{vec}\boldsymbol{\mathfrak{M}}\right)\left(\operatorname{vec}\boldsymbol{\mathfrak{M}}\right)^{\mathrm{T}}\right).
\end{equation*}
\end{theorem}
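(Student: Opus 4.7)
The plan is to follow exactly the template used for the matrix variate case in Theorem~\ref{kxxaslapg} and the symmetric tensor case in Theorem~\ref{kxxaslap} (the TAL version): reduce the statement to the multivariate generalized asymmetric Laplace setting by vectorization, then invoke the already-known covariance formula from \cite{kozub}, and finally reinterpret the result at the tensor level.

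First I would apply the preceding Theorem~\ref{vecaslap} (the TGAL/GAL equivalence) to obtain
\begin{equation*}
\boldsymbol{\mathcal{X}}\sim\mathcal{TGAL}_{\mathbf{n}}\left(\boldsymbol{\mathfrak{M}},\mathbf{\Sigma}_1,\ldots,\mathbf{\Sigma}_D,\lambda\right)\Leftrightarrow\operatorname{vec}\boldsymbol{\mathcal{X}}\sim\mathcal{GAL}_{n^{*}}\left(\operatorname{vec}\boldsymbol{\mathfrak{M}},\bigotimes_{i=1}^{D}\mathbf{\Sigma}_i,\lambda\right),
\end{equation*}
which transports the problem into the multivariate regime where the covariance is a known object. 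Second, I would cite the covariance property of the multivariate generalized asymmetric Laplace distribution from \cite{kozub}, which yields
\begin{equation*}
\mathbf{K}_{\operatorname{vec}\boldsymbol{\mathcal{X}},\operatorname{vec}\boldsymbol{\mathcal{X}}}=\lambda\left(\bigotimes_{i=1}^{D}\mathbf{\Sigma}_i+\left(\operatorname{vec}\boldsymbol{\mathfrak{M}}\right)\left(\operatorname{vec}\boldsymbol{\mathfrak{M}}\right)^{\mathrm{T}}\right),
\end{equation*}
with $\mathbf{\Psi}\otimes\mathbf{\Sigma}$ replaced by the full Kronecker chain $\bigotimes_{i=1}^{D}\mathbf{\Sigma}_i$ and $\lambda$ the shape parameter.

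Finally I would use the convention, already adopted in the previous tensor theorem, that the variance-covariance matrix of a random tensor is defined via its vectorization, so that $\mathbf{K}_{\boldsymbol{\mathcal{XX}}}=\mathbf{K}_{\operatorname{vec}\boldsymbol{\mathcal{X}},\operatorname{vec}\boldsymbol{\mathcal{X}}}$. Substituting the displayed formula then gives the desired
\begin{equation*}
\mathbf{K}_{\boldsymbol{\mathcal{XX}}}=\lambda\left(\bigotimes_{i=1}^{D}\mathbf{\Sigma}_i+\left(\operatorname{vec}\boldsymbol{\mathfrak{M}}\right)\left(\operatorname{vec}\boldsymbol{\mathfrak{M}}\right)^{\mathrm{T}}\right).
\end{equation*}
There is no genuine obstacle: every non-trivial analytic step has been off-loaded either to Theorem~\ref{vecaslap} or to the cited multivariate property, and the only thing to be careful about is the bookkeeping identification $\mathbf{K}_{\boldsymbol{\mathcal{XX}}}=\mathbf{K}_{\operatorname{vec}\boldsymbol{\mathcal{X}},\operatorname{vec}\boldsymbol{\mathcal{X}}}$, which is the same convention already used in the symmetric tensor case.
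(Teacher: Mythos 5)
Your proposal matches the paper's own proof step for step: vectorize via the preceding TGAL/GAL equivalence theorem, invoke the covariance formula for the multivariate generalized asymmetric Laplace distribution from the cited reference, and identify $\mathbf{K}_{\boldsymbol{\mathcal{XX}}}$ with $\mathbf{K}_{\operatorname{vec}\boldsymbol{\mathcal{X}},\operatorname{vec}\boldsymbol{\mathcal{X}}}$. No differences worth noting.
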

\begin{proof}
By Theorem \ref{vecaslap},
\begin{equation*}
\boldsymbol{\mathcal{X}}\sim\mathcal{TGAL}_{\mathbf{n}}\left(\boldsymbol{\mathfrak{M}},\mathbf{\Sigma}_1,\mathbf{\Sigma}_2,\ldots,\mathbf{\Sigma}_D,\lambda\right)\Leftrightarrow\operatorname{vec}\boldsymbol{\mathcal{X}}\sim\mathcal{GAL}_{n^{*}}\left(\operatorname{vec}\boldsymbol{\mathfrak{M}},\bigotimes_{i=1}^{D}\mathbf{\Sigma}_i,\lambda\right).
\end{equation*}
By the property of the multivariate generalized asymmetric Laplace distribution \cite{kozub}, if $\operatorname{vec}\boldsymbol{\mathcal{X}}\sim\mathcal{GAL}_{n^{*}}\left(\operatorname{vec}\boldsymbol{\mathfrak{M}},\bigotimes_{i=1}^{D}\mathbf{\Sigma}_i,\lambda\right)$, then
\begin{equation*}
\mathbf{K}_{\operatorname{vec}\boldsymbol{\mathcal{X}},\operatorname{vec}\boldsymbol{\mathcal{X}}}=\lambda\left(\bigotimes_{i=1}^{D}\mathbf{\Sigma}_i+\left(\operatorname{vec}\boldsymbol{\mathfrak{M}}\right)\left(\operatorname{vec}\boldsymbol{\mathfrak{M}}\right)^{\mathrm{T}}\right).
\end{equation*}
Thus,
\begin{equation*}
\mathbf{K}_{\boldsymbol{\mathcal{XX}}}=\mathbf{K}_{\operatorname{vec}\boldsymbol{\mathcal{X}},\operatorname{vec}\boldsymbol{\mathcal{X}}}=\lambda\left(\bigotimes_{i=1}^{D}\mathbf{\Sigma}_i+\left(\operatorname{vec}\boldsymbol{\mathfrak{M}}\right)\left(\operatorname{vec}\boldsymbol{\mathfrak{M}}\right)^{\mathrm{T}}\right).
\end{equation*}
\end{proof}
\section{Conclusions}
In this article, we have defined the matrix variate asymmetric Laplace distribution. We have proved some properties of the matrix variate asymmetric Laplace distribution. We have proved the relationship between the matrix variate asymmetric Laplace distribution and the multivariate asymmetric Laplace distribution.

Based on all of the above, we can argue that the matrix variate asymmetric Laplace distribution is a generalization of the multivariate asymmetric Laplace distribution.

In this article, we have defined the matrix variate generalized asymmetric Laplace distribution. We have proved some properties of the matrix variate generalized asymmetric Laplace distribution. We have proved the relationship between the matrix variate generalized asymmetric Laplace distribution and the multivariate generalized asymmetric Laplace distribution. We have proved the relationship between the matrix variate generalized asymmetric Laplace distribution and the matrix variate asymmetric Laplace distribution.

Based on all of the above, we can argue that the matrix variate generalized asymmetric Laplace distribution is a generalization of the multivariate generalized asymmetric Laplace distribution. Also we can argue that the matrix variate generalized asymmetric Laplace distribution is a generalization of the matrix variate asymmetric Laplace distribution.

We have obtained results that do not contradict each other and agree with the theory of matrix distributions.

In this article, we have defined the tensor variate asymmetric Laplace distribution. We have proved some properties of the tensor variate asymmetric Laplace distribution. We have proved the relationship between the tensor variate asymmetric Laplace distribution and the multivariate asymmetric Laplace distribution. Also we can argue that the matrix variate asymmetric Laplace distribution is a special case of the tensor variate asymmetric Laplace distribution if $D=2$.

Based on all of the above, we can argue that the tensor variate asymmetric Laplace distribution is a generalization of the multivariate asymmetric Laplace distribution.

In this article, we have defined the tensor variate generalized asymmetric Laplace distribution. We have proved some properties of the tensor variate generalized asymmetric Laplace distribution. We have proved the relationship between the tensor variate generalized asymmetric Laplace distribution and the multivariate generalized asymmetric Laplace distribution. Also we can argue that the matrix variate generalized asymmetric Laplace distribution is a special case of the tensor variate generalized asymmetric Laplace distribution if $D=2$.

Based on all of the above, we can argue that the tensor variate generalized asymmetric Laplace distribution is a generalization of the multivariate generalized asymmetric Laplace distribution. Also we can argue that the tensor variate generalized asymmetric Laplace distribution is a generalization of the tensor variate asymmetric Laplace distribution.

We have obtained results that do not contradict each other and agree with the theory of tensor distributions.

\end{document}